\newtheorem{thm}{Theorem}[section] 
\newtheorem{cor}[thm]{Corollary}
\newtheorem{prop}[thm]{Proposition}
\newtheorem{lem}[thm]{Lemma}
\theoremstyle{definition} 
\newtheorem{defn}[thm]{Definition}
\newtheorem{ex}[thm]{Example} 
\newtheorem{ques}[thm]{Question}
\theoremstyle{remark}
\newtheorem{rem}[thm]{Remark}
\newtheorem{prob}[thm]{Problem}
\newtheorem{claim}[thm]{Claim}
\numberwithin{equation}{section}
\newcommand{\rk}[0]{\operatorname{rk}}
\newcommand{\codim}[0]{\operatorname{codim}}
\newcommand{\Sym}{{\rm Sym}}
\newcommand{\Hom}[0]{\mathscr{H}\!\textit{om}}
\newcommand{\Alb}{{\rm Alb}}
\newcommand{\Tor}{{\rm tor}}
\newcommand{\reg}{{\rm{reg}}}
\newcommand{\sing}{{\rm{sing}}}
\newcommand{\qt}{{\rm{qt}}}
\newcommand{\deldel}{\sqrt{-1}\partial \overline{\partial}}
\newcommand{\Ker}[1]{\mathrm{Ker}(#1)}
\newcommand{\Z}{\mathbb{Z}}
\newcommand{\N}{\mathbb{Z}_+}
\newcommand{\C}{\mathbb{C}}
\newcommand{\Q}{\mathbb{Q}}
\title[Positivity of tangent sheaves]
{Positivity of tangent sheaves \\ of projective varieties \\
-\,- the structure of MRC fibrations}
\author{Masataka IWAI}
\address{Department of Mathematics, Graduate School of Science, Osaka University,
1-1, Machikaneyama-cho, Toyonaka, Osaka 560-0043, Japan.}
\email{{\tt masataka@math.sci.osaka-u.ac.jp}}
\email{{\tt masataka.math@gmail.com}}
\author{Shin-ichi MATSUMURA}
\address{Mathematical Institute 
$\&$ Division for the Establishment of Frontier Science of Organization for Advanced Studies,  
Tohoku University, 
6-3, Aramaki Aza-Aoba, Aoba-ku, Sendai 980-8578, Japan.}
\email{{\tt mshinichi-math@tohoku.ac.jp}}
\email{{\tt mshinichi0@gmail.com}}
\author{Guolei ZHONG}
\address{Center for Complex Geometry,
	Institute for Basic Science (IBS),
	55 Expo-ro, Yuseong-gu, Daejeon, 34126, Republic of Korea}
\email{{\tt guolei@ibs.re.kr}}
\email{{\tt zhongguolei@u.nus.edu}}
\subjclass[2010]{Primary 32J25, Secondary 14J26, 58A30.}
\keywords
{Tangent sheaves, 
Singular Hermitian metrics, 
Almost nef sheaves, 
Rationally connected varieties, 
Abelian varieties,  
MRC fibrations, 
Numerically flat vector bundles.}
\begin{document}

\begin{abstract}
In this paper, we extend the structure theorems for smooth projective varieties with nef tangent bundle to projective klt varieties whose tangent sheaf is either positively curved or almost nef. 
Specifically, we show that such a variety $X$, up to a finite quasi-\'etale cover, 
admits a rationally connected fibration $X \to A$ 
onto an abelian variety $A$. 
For the proof, we develop the theory of positivity of coherent sheaves on projective varieties.
As applications, we establish some relations between the geometric properties and positivity of tangent sheaves. 
\end{abstract}

\maketitle

\tableofcontents

\newpage

\section{Introduction}\label{Sec-1}
After Mori's solution \cite{Mor79} to the Hartshorne's conjecture, 
it has become clear that a positivity condition of the tangent bundle imposes strong restrictions on the geometry of underlying varieties.
Along this direction, the paper \cite{DPS94} established the structure theorem for smooth projective varieties with nef tangent bundle. 
The nefness of the tangent bundle is viewed as a condition of semi-positive curvature, and in this context,
numerous results on the structure of projective manifolds with semi-positive curvature have been studied  (see \cite{Cao19, CH19, HIM22, Iwa22, Mat20, Mat22}).
Although these structure theorems are not directly linked to the minimal model program, 
interestingly, some of them are generalized to projective varieties with mild singularities arising from the minimal model program 
(see \cite{CCM21, MW21, Mat22}). 

In this paper, building upon this framework, we extend the results of \cite{HIM22, Iwa22}, which were originally formulated for smooth projective varieties, 
to projective varieties with \textit{kawamata log terminal} (klt for short) singularities. 
Specifically, we establish the structure theorems 
for a projective klt variety whose tangent sheaf is positively curved or almost nef. 
We refer to Subsection \ref{subsec-positivity} for the definitions of positively curved or almost nef sheaves. 
The first result (Theorem \ref{thm-positively-curved}) generalizes \cite{HIM22} and the second result (Theorem \ref{thm-almost-nef}) generalizes \cite{Iwa22}.

\begin{thm}\label{thm-positively-curved}
Let $X$ be a projective klt variety with $($possibly singular$)$ positively curved tangent sheaf. 
Then, there exist a finite quasi-\'etale cover $\widehat{X} \to X$ and 
a fibration $\alpha \colon  \widehat{X} \to A$ 
satisfying the following properties$:$
\begin{enumerate}
\item[$(1)$] $\alpha \colon   \widehat{X} \to A$ is a locally constant fibration 
$($in particular, a locally trivial fibration$)$. 
\item[$(2)$] The base variety $A$ is an abelian variety. 
\item[$(3)$] The fiber of $\alpha \colon   \widehat{X} \to A$ is a rationally connected klt variety with positively curved tangent sheaf.  
\end{enumerate}
\end{thm}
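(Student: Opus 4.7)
The strategy is to analyze the MRC fibration of $X$, show its base is an abelian variety up to quasi-\'etale cover, and then upgrade the fibration to a locally constant one by exploiting flatness arising from the positive curvature of $T_X$.

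\textbf{Step 1 (MRC as a morphism).} I would start from the MRC fibration $\phi \colon X \dashrightarrow Y$ of Kollár--Miyaoka--Mori, which is almost holomorphic with rationally connected fibers and non-uniruled base $Y$. The first task is to arrange that $\phi$ is a genuine morphism with equidimensional rationally connected fibers. Since $T_X$ is positively curved, the foliation on $X$ tangent to the fibers should be algebraically integrable with rationally connected leaves, and one can apply a Kebekus--Sol\'a Conde--Toma/Druel-type argument (or just replace $X$ by a small $\Q$-factorial modification and use the rigidity of MRC in this positivity regime) to promote $\phi$ to a morphism between klt varieties.

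\textbf{Step 2 (base is abelian after quasi-\'etale cover).} From the tangent sequence $0 \to T_{X/Y} \to T_X \to \phi^{*}T_Y \to 0$ (on the smooth locus, extended in codimension one), the positively curved $T_X$ surjects onto a torsion-free coherent quotient that agrees with $\phi^{*}T_Y$ generically; hence $\phi^{*}T_Y$ is itself positively curved by the general stability of positive curvature under quotients. Taking determinants shows $-\phi^{*}K_Y$ is pseudo-effective, so $-K_Y$ is pseudo-effective. Combined with the pseudo-effectivity of $K_Y$ (Boucksom--Demailly--P\u{a}un--Peternell, using that $Y$ is non-uniruled), we obtain $K_Y \equiv 0$. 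Applying the singular Beauville--Bogomolov decomposition of Druel and Greb--Guenancia--Kebekus--Peternell, a quasi-\'etale cover of $Y$ splits as a product of an abelian variety with Calabi--Yau and irreducible holomorphic symplectic factors; the positive curvature of $T_Y$ forces the CY and IHS factors to be trivial (as in the smooth case, since $h^0$ of symmetric powers of their tangent sheaves vanishes, contradicting positivity of the determinant). So after a further quasi-\'etale cover we may assume $Y = A$ is an abelian variety, and pulling everything back to a simultaneous quasi-\'etale cover $\widehat{X} \to X$ produces the desired fibration $\alpha \colon \widehat{X} \to A$.

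\textbf{Step 3 (local constancy).} It remains to upgrade $\alpha$ to a locally constant fibration. Since $T_A$ is trivial, the surjection $T_{\widehat{X}} \twoheadrightarrow \alpha^{*}T_A = \mathcal{O}_{\widehat{X}}^{\oplus \dim A}$ is in a strong sense ``numerically trivial'' on the quotient side, and combined with the positive curvature of $T_{\widehat{X}}$, the relative tangent sheaf $T_{\widehat{X}/A}$ inherits a positively curved structure compatible with a flat connection along the base directions. Invoking the theory of numerically flat sheaves and their correspondence with unitary representations of $\pi_1(A)$ (in the spirit of the singular extensions of Deng--P\u{a}un, Cao--H\"oring, and the sheaf-positivity machinery developed earlier in this paper), one produces a representation $\rho \colon \pi_1(A) \to \Aut{F}$, where $F$ is a general fiber, such that $\widehat{X}$ is isomorphic over $A$ to the associated $F$-bundle $(\widetilde{A} \times F)/\pi_1(A)$. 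This yields the locally constant, hence locally trivial, structure.

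\textbf{Main obstacle.} The principal difficulty is running these positivity arguments entirely in the klt setting, where $T_X$ is only a reflexive coherent sheaf with singularities along the singular locus of $X$: one must ensure that positive curvature is preserved under the (singular) pullback, quotient, and determinant operations used above, and that the local constancy produced on the smooth locus extends across the singularities. The sheaf-theoretic positivity framework promised in the introduction is exactly what enables this, and correctly invoking it (particularly for the flatness statement that underpins Step 3) is where the real technical weight of the proof lies.
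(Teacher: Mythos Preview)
Your overall three-step strategy matches the paper's, but the mechanism you propose for Steps~1 and~3 is not the one that actually works, and the missing idea is the same in both places: the \emph{splitting} $\mathcal{T}_X \cong \mathcal{S}\oplus \mathcal{Q}$. In the paper, one first shows (using Propositions~\ref{prop-almostnef-c2-locallyfree} and~\ref{prop-shm-hermflat}) that the generic quotient $\mathcal{Q}=(\pi_*\varphi^*\Omega_Y)^\vee$ is Hermitian flat and locally free, and then Corollary~\ref{cor-shm-split} (positively curved sheaf with a numerically trivial reflexive quotient splits) gives the decomposition $\mathcal{T}_X\cong\mathcal{S}\oplus\mathcal{Q}$. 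This splitting is what makes $\mathcal{S}$ a \emph{weakly regular} foliation, and it is \cite[Theorem 4.6]{DGP20} on weakly regular algebraically integrable foliations that yields the equidimensional MRC morphism. Your Step~1 gestures at ``Kebekus--Sol\'a Conde--Toma/Druel'' or a small modification, but neither of those by itself produces an everywhere-defined equidimensional morphism in the klt setting; the weak regularity input is essential.

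The same splitting drives Step~3. Once one has $\mathcal{T}_W\cong\mathcal{S}\oplus\psi^*\mathcal{T}_A$ on the cover $W$, passing to a functorial resolution $\widetilde{W}\to W$ and applying \cite[Lemma 5.10]{Dru18} gives a genuine splitting $\mathcal{T}_{\widetilde W}=\mathcal{T}_{\widetilde W/A}\oplus(\psi\circ\alpha)^*\mathcal{T}_A$ on a smooth variety; Ehresmann then makes $\widetilde W\to A$ locally constant, and one descends to $W\to A$ via the direct-image criteria of \cite[Lemma 2.4, Proposition 2.5]{MW21}. Your ``numerically flat sheaves $\leadsto$ representations of $\pi_1(A)$'' sketch does not explain how the representation into $\Aut{F}$ is produced; the actual route goes through Ehresmann on a resolution, not through a monodromy argument on sheaves.

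Your Step~2 reaches the right conclusion but by a heavier and slightly garbled path. The paper does not invoke the Beauville--Bogomolov decomposition: it shows directly that $\mathcal{T}_Y$ is positively curved with $c_1(\mathcal{T}_Y)=0$, hence Hermitian flat (Proposition~\ref{prop-shm-hermflat}), and then quotes \cite[Theorem 1.2]{Gac20} to conclude $Y$ is a quasi-\'etale quotient of an abelian variety. Your phrase ``contradicting positivity of the determinant'' cannot be right, since $K_Y\equiv 0$ means the determinant is numerically trivial; the way one actually rules out CY/IHS factors is via the vanishing of $\widehat c_2$ forced by flatness, which is exactly what the direct argument gives.
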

The term ``fibration" denotes a surjective proper morphism with connected fibers, 
and we  refer to \cite[Section 2]{MW21} for the definition of locally constant fibrations. 

\begin{thm}\label{thm-almost-nef}
Let $X$ be a projective klt variety with almost nef tangent sheaf. 
Then, there exists a fibration $\alpha \colon  X \to Y$ satisfying the following properties$:$
\begin{enumerate}
\item[$(1)$] The fibration $\alpha \colon  X \to Y$ is  flat, and every irreducible component of the singular locus of $X$ $($if it is non-empty$)$ dominates $Y$. 
\item[$(2)$] The base variety $Y$ is an \'etale quotient of an abelian variety $($i.e.,\,there exists a finite \'etale cover $A \to Y$ from an abelian variety $A$$)$. 

\item[$(3)$] Any fiber of  $\alpha \colon  X \to Y$ is an irreducible and reduced rationally connected klt variety, 
and a very general fiber has almost nef tangent sheaf. 
\end{enumerate}
\end{thm}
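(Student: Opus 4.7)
The plan is to adapt the strategy of the smooth case \cite{Iwa22} to the klt setting via an MRC fibration whose base is forced by positivity considerations to be an \'etale quotient of an abelian variety. Concretely, I first take a resolution $\mu\colon \widetilde{X}\to X$ and pass to a suitable log-resolution of an MRC fibration, yielding a morphism $\widetilde{\alpha}\colon \widetilde{X}\to Y$ with $Y$ smooth, $\widetilde{\alpha}$-fibers rationally connected, and $Y$ non-uniruled. On the common open locus where both the differential $T_X\to \alpha^{*}T_Y$ and the map $\alpha$ make sense, one gets a generic surjection onto (the pullback of) $T_Y$; since quotients of almost nef torsion-free sheaves remain almost nef and reflexive hulls preserve this, the tangent sheaf $T_Y$ is almost nef.

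With $T_Y$ almost nef and $Y$ non-uniruled, both $-K_Y$ and $K_Y$ are pseudoeffective (the latter by Boucksom-Demailly-P\u{a}un-Peternell), hence $K_Y\equiv 0$. By the Beauville-Bogomolov decomposition for klt Calabi-Yau varieties (Druel; Greb-Guenancia-Kebekus; H\"oring-Peternell), $Y$ splits after a finite \'etale cover into abelian, strict Calabi-Yau, and irreducible holomorphic symplectic factors. The tangent sheaves of strict Calabi-Yau and irreducible symplectic factors are not almost nef (this would force a numerically flat structure incompatible with their holonomy characterization and nontrivial $\pi_1$-behaviour), so only the abelian factor survives and $Y$ is an \'etale quotient of an abelian variety $A$.

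To descend $\widetilde{\alpha}$ to an honest morphism on $X$, I use that any $\mu$-exceptional divisor is uniruled and must therefore be contracted by a morphism into an \'etale quotient of $A$; hence $\widetilde{\alpha}$ factors through $\mu$ and produces $\alpha\colon X\to Y$. Flatness of $\alpha$ follows from Cohen-Macaulayness of $X$ (klt implies rational singularities) together with the smoothness of $Y$, once equidimensionality of the fibers is established --- and this equidimensionality stems from the MRC structure combined with the rigidity of maps from rational curves into abelian varieties. Irreducibility, reducedness, klt-ness, and rationally connectedness of the general fiber follow from Bertini-type results for klt singularities and the MRC property; uniformity across all fibers is then propagated using equidimensional flatness over the smooth base $Y$ and the quasi-\'etale cover $A\to Y$. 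Very general fibers $F$ have almost nef tangent sheaf because the exact sequence $0\to T_{X/Y}\to T_X\to \alpha^{*}T_Y\to 0$ restricted to $F$ has trivial right-hand term $\alpha^{*}T_Y|_F\cong \mathcal{O}_F^{\dim Y}$ and an almost nef middle term $T_X|_F$, which allows one to transfer almost nefness to $T_F=T_{X/Y}|_F$ using pseudoeffectivity of $-K_F$. Finally, no component of $\mathrm{Sing}(X)$ can be $\alpha$-vertical, since equidimensional flatness over the smooth $Y$ would force such a component to meet a general fiber in positive codimension incompatibly with its construction.

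The main obstacle will be the descent of the MRC fibration to $X$ together with the refined properties of the resulting morphism --- flatness, equidimensionality, dominance of every component of $\mathrm{Sing}(X)$, and almost nefness of $T_F$ on very general $F$. In the smooth case of \cite{Iwa22} the MRC map is almost automatically a morphism and its fiber-structure is transparent, whereas in the klt case one must carefully control the interaction of the MRC with both $\mathrm{Exc}(\mu)$ and $\mathrm{Sing}(X)$. The technically most subtle step is establishing almost nefness of $T_F$ on a very general fiber, because subsheaves of almost nef sheaves are not generally almost nef; here the argument must genuinely exploit the triviality of $\alpha^{*}T_Y|_F$ together with the pseudoeffectivity of $-K_F$ and the specific shape of the relative tangent sequence.
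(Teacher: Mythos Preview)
Your proposal misses the paper's key technical device: taking a \emph{functorial} resolution $\pi\colon\widetilde{X}\to X$. For such a resolution there is a generically surjective morphism $\pi^*\mathcal{T}_X\to\mathcal{T}_{\widetilde{X}}$ (Lemma~\ref{lem-functorial-resolution-tangent}), so $\mathcal{T}_{\widetilde{X}}$ itself is almost nef. This is what allows the paper to apply the smooth case \cite{Iwa22} directly to $\widetilde{X}$ and obtain a \emph{smooth} MRC fibration $\varphi\colon\widetilde{X}\to Y$ onto a smooth $Y$ which is already an \'etale quotient of an abelian variety. You bypass this step and try to argue that $\mathcal{T}_Y$ is almost nef via an ``open locus'' quotient of $\mathcal{T}_X$; even granting that, without $\mathcal{T}_{\widetilde{X}}$ almost nef your resolved map $\widetilde{\alpha}$ is merely some resolution of an MRC rational map, with no reason to be smooth or even equidimensional. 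All of the fiber-structure conclusions in (1) and (3) --- flatness of $\alpha$, irreducibility and reducedness of \emph{every} fiber, klt-ness of every fiber, and the fact that each component of $X_{\sing}$ dominates $Y$ --- are extracted in the paper (Lemma~\ref{lem-flat-reduced}) precisely from the smoothness of $\varphi\colon\widetilde{X}\to Y$, and your sketch provides no substitute for this.

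There is a second gap in your argument for the almost nefness of $\mathcal{T}_F$. You correctly flag that subsheaves of almost nef sheaves need not be almost nef and that one must exploit $\alpha^*\mathcal{T}_Y|_F\cong\mathcal{O}_F^{\oplus k}$, but invoking ``pseudoeffectivity of $-K_F$'' does not close the gap. The paper (Claim~\ref{claim-almost-nef-fibre}) first upgrades the generically surjective map $\mathcal{T}_X|_F\to\mathcal{O}_F^{\oplus k}$ to an honest surjection via the principle that any nonzero morphism from an almost nef sheaf to $\mathcal{O}$ is surjective (Lemma~\ref{lem-elementary-lem-almostnef}\,(6)); the missing idea is then to pass to exterior powers, producing a generically surjective morphism $\bigwedge^{[k+1]}(\mathcal{T}_X|_F)\to\mathcal{T}_F$, so that $\mathcal{T}_F$ becomes a generic quotient of an almost nef sheaf. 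Finally, your detour through the Beauville--Bogomolov decomposition is both unnecessary (once $Y$ is smooth with $\mathcal{T}_Y$ almost nef and $K_Y$ pseudo-effective, $\mathcal{T}_Y$ is numerically flat and Yau's theorem finishes) and incorrectly justified: strict Calabi--Yau and irreducible symplectic factors are ruled out because almost nefness with $c_1=0$ forces $\widehat{c}_2=0$, not by a ``holonomy'' argument.
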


The fibration $\alpha \colon  X \to Y$ in Theorem \ref{thm-almost-nef} is not necessarily locally constant, 
but it is expected to be locally trivial (cf.\,\cite[Theorem 1.1]{HIM22}). 
From this perspective, Theorem \ref{thm-almost-nef} (1) is a natural property although it looks technical.
It is known that positively curved \textit{vector bundles} are always almost nef. 
Therefore, when $X$ is smooth,  Theorem \ref{thm-positively-curved} directly follows from Theorem \ref{thm-almost-nef}, except for the local constancy in Theorem \ref{thm-positively-curved} (1). 
However, when $X$ has singularities, the tangent sheaf $\mathcal{T}_{X}$ is not necessarily almost nef even if it is positively curved (see Example \ref{exa-Gachet20}). 
Such a problem comes from the fact that the tangent sheaf is not  locally free. 
Thus, in this paper, we tackle this particular situation by developing the theory of positivity for coherent sheaves  on projective klt varieties.

The proofs of Theorems \ref{thm-positively-curved} and \ref{thm-almost-nef} are essentially different.
The proof of Theorem \ref{thm-positively-curved} is based on the theory of foliations, 
while the proof of Theorem \ref{thm-almost-nef} relies on the functorial resolutions of singularities. 
This difference presents an interesting aspect that never appears in the smooth case.

Our structure theorems allow us to reduce some problems for the ambient space $X$ to the rationally connected fiber. 
As the first application (Corollary \ref{cor-almost-nef}), we show that the almost nefness of the tangent sheaf controls the singularities of $X$,  
which behaves differently from varieties with positively curved tangent sheaf.
Specifically, a quasi-\'etale quotient $X$ of an abelian variety has positively curved tangent sheaf, 
but unless $X$ is smooth, the tangent sheaf $\mathcal{T}_{X}$ cannot be  almost nef.

\begin{cor}\label{cor-almost-nef}
Let $X$ be a projective klt variety with almost nef tangent sheaf. 
If the canonical divisor $K_{X}$ is numerically trivial, 
then $X$ is an \'etale quotient of an abelian variety $($in particular, it is smooth$)$. 
\end{cor}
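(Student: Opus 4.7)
The plan is to apply Theorem \ref{thm-almost-nef} and then show that the fibration it produces must be an isomorphism. Invoking Theorem \ref{thm-almost-nef}, I obtain a flat fibration $\alpha \colon X \to Y$ such that $Y$ is an \'etale quotient of an abelian variety (hence smooth, with numerically trivial canonical class), and every fiber $F$ of $\alpha$ is an irreducible, reduced, rationally connected klt variety. Once I show $\dim F = 0$, every fiber becomes a single reduced point, so the proper flat morphism $\alpha$ is an isomorphism, and $X \cong Y$ is the desired \'etale quotient of an abelian variety, which is in particular smooth.

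To bound the dimension of $F$, I control $K_F$ through the relative canonical divisor. Since $X$ is klt and $Y$ is smooth, $K_{X/Y} := K_X - \alpha^{*}K_Y$ is a well-defined $\Q$-Cartier divisor. The hypothesis $K_X \equiv 0$ combined with $K_Y \equiv 0$ yields $K_{X/Y} \equiv 0$. By the flatness of $\alpha$ and the reducedness of the fibers, adjunction gives $K_F \equiv (K_{X/Y})|_F \equiv 0$ for every fiber $F$.

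Now I derive a contradiction from the assumption $\dim F \geq 1$. Being projective and rationally connected, $F$ carries a very free rational curve $f \colon \mathbb{P}^1 \to F^{\reg}$ through a general smooth point of $F$, i.e., $f^{*}\mathcal{T}_{F}$ is an ample vector bundle (Koll\'ar--Miyaoka--Mori). Hence
\[
-K_F \cdot f_{*}[\mathbb{P}^1] \;=\; \deg f^{*}\mathcal{T}_{F} \;>\; 0,
\]
contradicting the numerical triviality of $K_F$. Therefore $\dim F = 0$ and the argument closes.

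The main subtlety lies in the adjunction step: one must ensure that $K_F$ is well-defined and equals $(K_{X/Y})|_F$ in a numerical sense despite the possible singularities of $F$. This is precisely guaranteed by the flatness of $\alpha$, the reducedness and irreducibility of the fibers, and the smoothness of $Y$, all supplied by Theorem \ref{thm-almost-nef}. The concluding very-free-curve argument is then a standard incompatibility between rational connectedness in positive dimension and a numerically trivial canonical divisor.
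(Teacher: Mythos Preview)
Your reduction to showing $\dim F = 0$ for a rationally connected klt fiber $F$ with $K_F \equiv 0$ is correct, and the adjunction step is indeed justified by Theorem \ref{thm-almost-nef} (via Lemma \ref{lem-flat-reduced}). However, the final step contains a genuine gap. The claim that such an $F$ carries a very free rational curve $f\colon \mathbb{P}^1 \to F^{\reg}$ is a theorem for \emph{smooth} varieties (Koll\'ar--Miyaoka--Mori), but it fails in the singular klt setting: there exist rationally connected klt varieties of positive dimension with numerically trivial canonical divisor. A concrete instance is the rationally connected $\Q$-abelian threefold of Ueno and Campana (Example \ref{exa-Qabelian}), a quasi-\'etale quotient of an abelian variety. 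Its regular locus admits an \'etale cover by an open subset of an abelian variety and therefore contains no rational curves whatsoever; every rational curve on it must pass through the singular points, and no very free curve in $F^{\reg}$ exists. The contradiction you seek simply does not arise.

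The ingredient you never invoke is the almost nefness of $\mathcal{T}_F$, which Theorem \ref{thm-almost-nef}(3) supplies for a very general fiber and which is exactly what excludes such $\Q$-abelian examples (cf.\ Corollary \ref{ques-high-dim-not-almost-but=pseudo-effective}). The paper's proof uses this hypothesis in an essential way: after the same reduction to a rationally connected klt $F$ with $K_F \sim_{\Q} 0$ and almost nef tangent sheaf, it passes to the global index-one cover $F'$, shows via a functorial resolution and Corollary \ref{cor-c1-almostnef-tangent} that $F'$ is smooth (so $F$ has only cyclic quotient singularities), and then performs an explicit local computation showing that any non-trivial cyclic quotient singularity forces the tangent sheaf to fail almost nefness. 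Only through this analysis does one conclude $\dim F = 0$.
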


As the second application, we establish a relation 
between the geometry of $X$ and positivity of the tangent sheaf $\mathcal{T}_X$ by using the Fujita decomposition.
For this purpose, we generalize the Fujita decomposition to reflexive sheaves on projective klt varieties.
Thanks to the Fujita decomposition, as detailed in Theorem \ref{thm-fujita-decomposition-klt}, the tangent sheaf $\mathcal{T}_X$ decomposes into a flat part and a positive part. 
Theorem \ref{thm-MRC-Albanese-main} (1)  shows that the flat part is related to the base variety in the structure theorems.

\begin{thm}
\label{thm-MRC-Albanese-main}
Let \(X\) be a projective klt variety with positively curved or almost nef tangent sheaf.
After we replace $X$ with a maximally quasi-\'etale cover,
there exists a fibration $\alpha \colon  X\to A$ $($as in Theorems \ref{thm-positively-curved} and \ref{thm-almost-nef}$)$ 
onto an abelian variety $A$ of dimension $\widehat{q}(X)$ 
and the pullback $\alpha^{*}\mathcal{T}_{A}$ of the tangent bundle $\mathcal{T}_{A}$  
coincides with the reflexive hull of the flat part of the Fujita decomposition of $\mathcal{T}_{X}$.
In particular, the augmented irregularity $\widehat{q}(X)$ is equal to the rank of the flat part of $\mathcal{T}_{X}$.

\end{thm}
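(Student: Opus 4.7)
The plan is to combine the structure theorems (Theorems \ref{thm-positively-curved} and \ref{thm-almost-nef}) with the Fujita decomposition for reflexive sheaves on projective klt varieties (Theorem \ref{thm-fujita-decomposition-klt}) and with the theory of Albanese morphisms for maximally quasi-\'etale klt varieties.

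First, I would apply Theorem \ref{thm-positively-curved} (in the positively curved case), or Theorem \ref{thm-almost-nef} together with a further \'etale base change (in the almost nef case, to replace the \'etale quotient in part $(2)$ by an abelian variety), to obtain, after a finite quasi-\'etale cover of $X$, a fibration $\alpha_0\colon X\to A_0$ onto an abelian variety $A_0$ with rationally connected klt fibers. Replacing $X$ by a further quasi-\'etale cover if necessary, I may also assume that $X$ is maximally quasi-\'etale.

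For part $(1)$, I would first observe that $\alpha_0$ (being locally constant in the positively curved case and flat with controlled singular locus in the almost nef case) induces an inclusion $\alpha_0^*\mathcal{T}_{A_0}\hookrightarrow \mathcal{T}_X$ of reflexive sheaves, and that $\alpha_0^*\mathcal{T}_{A_0}$ is numerically flat since $\mathcal{T}_{A_0}$ is trivial. Maximality of the flat part $\mathcal{F}$ in the Fujita decomposition then yields $\alpha_0^*\mathcal{T}_{A_0}\subseteq \mathcal{F}^{**}$. For the reverse inclusion, the key point is that $\mathcal{F}^{**}$ is a numerically flat reflexive subsheaf of $\mathcal{T}_X$; by the algebraic integrability theorem for numerically flat foliations in the klt setting (\`a la Druel and H\"oring--Peternell), its leaves form a fibration $X\to B$ whose base carries a numerically trivial tangent sheaf. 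The Beauville--Bogomolov decomposition for klt varieties then implies that $B$ is, up to a finite quasi-\'etale cover, an abelian variety, and universality of the Albanese (combined with rational connectedness of the fibers of $\alpha_0$, which are collapsed by any morphism to an abelian variety) identifies $B$ with the abelian variety $A$ in the structure theorem after a further quasi-\'etale base change. This gives the equality $\alpha^*\mathcal{T}_A=\mathcal{F}^{**}$, hence $\dim A=\mathrm{rank}\,\mathcal{F}=\widehat{q}(X)$.

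For part $(2)$, the maximally quasi-\'etale property of a general fiber is inherited from $X$: any quasi-\'etale cover of $F$ spreads out along the family $X\to A$ to a quasi-\'etale cover of $X$, which must already be \'etale by maximality of $X$, and restriction back to $F$ shows the original cover was \'etale. For the vanishing $\widehat{q}(F)=0$ of a very general fiber, I would argue by contradiction: a nontrivial Albanese morphism on some quasi-\'etale cover $\widetilde F\to F$ would, via spreading out and functoriality of the Albanese, produce a fibration from a quasi-\'etale cover of $X$ onto an abelian variety of dimension strictly greater than $\dim A=\widehat{q}(X)$, contradicting the very definition of augmented irregularity.

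The main obstacle will be the comparison in part $(1)$ of $\mathcal{F}^{**}$ with $\alpha^*\mathcal{T}_A$ in the almost nef case, where $\alpha$ is only known to be flat rather than locally constant. In this setting the natural horizontal subsheaf $\alpha^*\mathcal{T}_A\hookrightarrow \mathcal{T}_X$ is defined only on the smooth locus and must be extended reflexively using the klt hypothesis (so that the singular locus has codimension at least two); the matching of the two associated foliations, and the verification that the algebraic integrability theorem applies in this singular setting, constitute the technical heart of the proof.
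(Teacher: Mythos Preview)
Your proposal contains a genuine gap in part~(1), rooted in a confusion about which direction the natural maps go and about where the flat part of the Fujita decomposition sits. In the paper's Fujita decomposition (Theorem~\ref{thm-fujita-decomposition-klt}) the flat part $\mathcal{Q}$ is a \emph{quotient} of $\mathcal{T}_X$, not a subsheaf; dually, $\mathcal{Q}^\vee$ is the maximal flat locally free subsheaf of $\Omega_X^{[1]}$ (Remark~\ref{rem-maximal-flat-locally-free}). The natural differential map is $\alpha^*\Omega_A\hookrightarrow\Omega_X^{[1]}$, not $\alpha^*\mathcal{T}_A\hookrightarrow\mathcal{T}_X$. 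In the almost nef case the Fujita decomposition is not known to split, so there is no horizontal subsheaf of $\mathcal{T}_X$ available, and hence no numerically flat foliation to which an algebraic integrability theorem could be applied. Your proposed argument via integrability of $\mathcal{F}^{**}$ and the Beauville--Bogomolov decomposition therefore does not get off the ground.

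The paper's argument for part~(1) works entirely on the cotangent side and avoids any splitting or foliation integrability. The inclusion $\alpha^*\Omega_A\subseteq\Omega_{\widehat X}^{[1]}$ and maximality of $\mathcal{Q}^\vee$ give $\dim A\le\rk\mathcal{Q}$. For the reverse inequality one restricts the Fujita sequence to a general fiber $F$: since $F$ is a rationally connected klt variety it is simply connected, so $\mathcal{Q}|_F$ is trivial; as $H^0(F,\Omega_F^{[1]})=0$, the composite $\mathcal{T}_F\to\mathcal{T}_X|_F\to\mathcal{Q}|_F$ vanishes, whence $\mathcal{T}_F$ injects into the kernel and $\dim F\le\rk\mathcal{T}_X-\rk\mathcal{Q}$, i.e.\ $\rk\mathcal{Q}\le\dim A$. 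Lemma~\ref{lem-nonvanish} then upgrades the inclusion $\alpha^*\Omega_A\subseteq\mathcal{Q}^\vee$ of equal rank to an equality.

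For part~(2), your spreading-out sketch is also not what the paper does and would require substantial justification. The paper instead establishes that both $\widehat X\to A$ and $\widehat X_{\reg}\to A$ (or, in the almost nef case, the corresponding maps on a functorial resolution and on the complement of the exceptional divisor) are topologically locally trivial, yielding two homotopy exact sequences; comparing them via the snake lemma transfers the maximally quasi-\'etale property from $\widehat X$ to each fiber. The vanishing $\widehat q(F)=0$ then follows from Corollary~\ref{cor-maximally-etale-main}.
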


We refer to Section \ref{Sec-3} for the definition of maximally quasi-\'etale covers. 
Based on Theorem \ref{thm-MRC-Albanese-main}, we obtain the following corollary, 
which illustrates a relation among the generic ampleness of the tangent sheaf, the vanishing of augmented irregularity, and the rational connectedness.  

\begin{cor}
\label{cor-maximally-etale-main}
Let $X$ be a projective klt variety with positively curved or almost nef tangent sheaf.
Then, the following conditions are equivalent.
\begin{enumerate}[label=$(\alph*)$]
\item The tangent sheaf $\mathcal{T}_X$ is generically ample.
\item The augmented irregularity $\widehat{q}(X)$ is zero.
\item Any finite quasi-\'etale cover of $X$ is rationally connected.
\end{enumerate}
\end{cor}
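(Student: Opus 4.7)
The plan is to deduce the three equivalences by combining Theorem \ref{thm-MRC-Albanese-main} with the Fujita decomposition $\mathcal{T}_X \cong \mathcal{F} \oplus \mathcal{P}$ alluded to in the introduction, where $\mathcal{F}$ is the flat part. The core input is Theorem \ref{thm-MRC-Albanese-main} (1): the integer $\widehat{q}(X)$ equals the rank of $\mathcal{F}$, so vanishing of $\widehat{q}(X)$ is equivalent to vanishing of $\mathcal{F}$. I would prove the cycle $(c) \Rightarrow (b) \Rightarrow (c)$ and the pair $(a) \Leftrightarrow (b)$ separately.

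For $(c) \Rightarrow (b)$, a rationally connected projective variety has vanishing irregularity, so if every finite quasi-\'etale cover of $X$ is rationally connected, then $q(\widehat{X}) = 0$ for every such cover and hence $\widehat{q}(X) = 0$. For $(b) \Rightarrow (c)$, let $\widehat{X} \to X$ be an arbitrary quasi-\'etale cover. Augmented irregularity is preserved under quasi-\'etale covers, giving $\widehat{q}(\widehat{X}) = 0$, and positive curvature (resp.\,almost nefness) of the tangent sheaf pulls back to $\widehat{X}$ since these positivity conditions live in codimension one, where the map is \'etale. Applying Theorem \ref{thm-MRC-Albanese-main} to $\widehat{X}$ produces a further quasi-\'etale cover $\widehat{X}' \to \widehat{X}$ admitting a fibration to an abelian variety of dimension $\widehat{q}(\widehat{X}) = 0$; hence $\widehat{X}'$ is itself rationally connected. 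Since $\widehat{X}' \to \widehat{X}$ is finite and surjective and the image of a rational curve is rational, $\widehat{X}$ is rationally connected as well.

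For $(a) \Rightarrow (b)$, if $\mathcal{T}_X$ is generically ample, then on a general complete intersection curve $C$ the restriction $\mathcal{T}_X|_C$ is an ample vector bundle; every direct summand of an ample bundle on a curve is ample, whereas a nonzero numerically flat bundle is not, so $\mathcal{F} = 0$ and Theorem \ref{thm-MRC-Albanese-main} (1) yields $\widehat{q}(X) = 0$. For $(b) \Rightarrow (a)$, I would first use the already-established $(b) \Rightarrow (c)$ to reduce, after a quasi-\'etale cover, to the case where $X$ is rationally connected with positively curved or almost nef tangent sheaf, and then argue that $\mathcal{T}_X$ is generically ample in this setting. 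The main obstacle is this last implication: one needs to combine the non-pseudo-effectivity of $K_X$ (which holds on rationally connected varieties) with the positivity hypothesis on $\mathcal{T}_X$ and a Mehta--Ramanathan-type restriction theorem to conclude that every Harder--Narasimhan piece of $\mathcal{T}_X$ has strictly positive slope on a general complete intersection curve. Descent of generic ampleness from the quasi-\'etale cover back to $X$ is then automatic, since quasi-\'etale morphisms are \'etale in codimension one and a general complete intersection curve in $X$ lifts to one in the cover.
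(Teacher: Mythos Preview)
Your arguments for $(c)\Rightarrow(b)$ and $(b)\Rightarrow(c)$ are correct and essentially identical to the paper's (the paper phrases $(b)\Rightarrow(c)$ as a contrapositive, but the content is the same).

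Two issues arise in your treatment of $(a)\Leftrightarrow(b)$. The minor one: the Fujita decomposition of Theorem~\ref{thm-fujita-decomposition-klt} is in general only a short exact sequence $0\to\mathcal{S}\to\mathcal{T}_{\widehat X}\to\mathcal{Q}\to 0$ (a splitting is asserted only in the positively curved case), and it lives on a \emph{maximally quasi-\'etale} cover $\widehat X$, not on $X$. So the ``direct summand of an ample bundle on a curve'' reasoning for $(a)\Rightarrow(b)$ is not quite right; the clean statement is that generic ampleness forces every torsion-free quotient to have strictly positive slope, hence the flat quotient $\mathcal{Q}$ vanishes. Combined with Proposition~\ref{prop-genericallyample-etale} (generic ampleness transfers along quasi-\'etale covers) and Theorem~\ref{thm-MRC-Albanese-main}~(1), this gives $(a)\Rightarrow(b)$.

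The real gap is $(b)\Rightarrow(a)$, which you leave as the ``main obstacle'' and propose to attack via rational connectedness, non-pseudo-effectivity of $K_X$, and Mehta--Ramanathan. This detour is unnecessary: the implication is already built into the Fujita decomposition you invoked for the other direction. If the flat part has rank zero, then on the cover $\widehat X$ one has $\mathcal{T}_{\widehat X}=\mathcal{S}$, and $\mathcal{S}$ is generically ample \emph{by construction} (Theorem~\ref{thm-fujita-decomposition-klt}~(2)); Proposition~\ref{prop-genericallyample-etale} then descends this to $X$. The paper records this as Theorem~\ref{thm-MRC-Albanese-almostnef}~(4): $\widehat q(X)$ equals the rank of the maximal destabilizing subsheaf of $\Omega_X^{[1]}$, which is zero precisely when $\mathcal{T}_X$ is generically ample. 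Your proposed route can be completed (it is close to the paper's $(e)\Rightarrow(a)$ under simple connectedness: the flat quotient becomes trivial, forcing $H^0(X,\Omega_X^{[1]})\neq 0$, contradicting rational connectedness via \cite{GKKP11}), but it duplicates work already absorbed into the Fujita decomposition.
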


Note that there exists a rational klt surface $X$ such that the tangent sheaf $\mathcal{T}_{X}$ is positively curved but not generically ample (see~\cite[Subsection 3.B]{GKP14} and \cite[Example 1.1]{Ou14}). 
As shown in Corollary \ref{cor-maximally-etale-main}, this rational surface admits a non-rational quasi-\'etale cover.
This is a ``pathological'' example that does not occur in the smooth case.  
Theorem \ref{thm-MRC-Albanese-main} and Corollary \ref{cor-maximally-etale-main} provide insights into such ``pathological'' examples, 
especially regarding positivity of the tangent sheaves (see also Question \ref{ques-anf-vanishing-aug}).

This paper is organized as follows. In Section \ref{Sec-2}, we develop a theory of positivity for coherent sheaves on singular varieties. 
In Section \ref{Sec-3}, we generalize the Fujita decomposition (Theorem \ref{thm-fujita-decomposition-klt})  
and a flatness criteria (Proposition \ref{prop-almostnef-c2-locallyfree}) for reflexive sheaves. 
We prove Theorem \ref{thm-positively-curved} in Section \ref{Sec-4} and Theorem \ref{thm-almost-nef} in Section \ref{Sec-5}, 
respectively. 
In Section \ref{Sec-6}, we prove Theorem \ref{thm-MRC-Albanese-main} and Corollary \ref{cor-maximally-etale-main}. 
In Section \ref{Sec-7}, we provide examples of projective varieties whose tangent sheaf is positively curved, almost nef, or satisfies other positivity conditions.

\subsection*{Acknowledgment}\label{subsec-ack}
The authors would like to thank Professors Jie Liu, Niklas M\"uller, and Shou Yoshikawa for valuable comments and suggestions,  
and  to the organizers of the ``Korea-Japan Conference in Algebraic Geometry" 
held in February 2023, where discussions served as a foundation for this paper.
The authors would also like to thank the referee for the careful reading and the suggestions to improve the paper. 

M.\,I.\,was supported by the Grant-in-Aid for Early Career Scientists, $\sharp$22K13907.
S.\,M.\,was supported by the Grant-in-Aid for Scientific Research (B), $\sharp$21H00976, 
the Fostering Joint International Research (A), $\sharp$19KK0342, provided by JSPS, 
and the JST FOREST Program, $\sharp$PMJFR2368, provided by JST.  
G.\,Z.\,was supported by the Institute for Basic Science (IBS-R032-D1-2023-a00).

\section{Preliminaries}\label{Sec-2}
\subsection{Notation and conventions}\label{subsec-notation}

Throughout this paper, we work over the field of complex numbers. 
We employ the terms ``Cartier divisors,"``invertible sheaves," and ``line bundles" interchangeably. 
Similarly, the terms ``locally free sheaves" and ``vector bundles" are used synonymously.
Unless explicitly mentioned, all the sheaves are assumed to be coherent.

Let $\mathcal{E}$ be a torsion-free (coherent) sheaf on a normal variety $X$. 
The notation $X_{\mathcal{E}}\subseteq X$ denotes 
the largest Zariski open subset where $\mathcal{E}$ is locally free. 
Note that $\codim (X \setminus  X_{\mathcal{E}}) \geq 2$ holds. 
We define the dual reflexive sheaf $\mathcal{E}^\vee$ by $\mathcal{E}^\vee\coloneqq \Hom(\mathcal{E}, \mathcal{O}_X)$. 
Furthermore, for a positive integer $m\in\mathbb{Z}_+$, 
we define 
\begin{itemize}
\item the reflexive tensor power $\mathcal{E}^{[\otimes m]}$ by $\mathcal{E}^{[\otimes m]}\coloneqq(\mathcal{E}^{\otimes m})^{\vee\vee}$; 
\item the reflexive symmetric power $\textup{Sym}^{[m]}\mathcal{E}$ by $\textup{Sym}^{[m]}\mathcal{E}\coloneqq(\text{Sym}^m\mathcal{E})^{\vee\vee}$; 
\item  the reflexive exterior power $\wedge^{[m]}\mathcal{E}$ by $\wedge^{[m]}\mathcal{E}\coloneqq(\wedge^m\mathcal{E})^{\vee\vee}$. 
\end{itemize}
The reflexive top exterior power $\det\mathcal{E}\coloneqq \wedge^{[\rk \mathcal{E}]}\mathcal{E}$ 
is called {\textit{the determinant sheaf}} of $\mathcal{E}$, where $\rk \mathcal{E}$ is the rank of $\mathcal{E}$. 
The sheaf \(\mathcal{E}\) of rank one is said to be \textit{\(\mathbb{Q}\)-Cartier} 
if  there exists \(m\in\mathbb{Z}_+\) such that \(\mathcal{E}^{[m]}\) is an invertible sheaf. 
For a torsion-free sheaf $\mathcal{F}$ on \(X\),  
we define the reflexive tensor product $\mathcal{E} [\otimes]\mathcal{F}$  
by $\mathcal{E} [\otimes]\mathcal{F} \coloneqq (\mathcal{E} \otimes\mathcal{F} )^{\vee\vee}$. 
For a morphism $f \colon  Y \rightarrow X$ between normal  varieties, 
we define the reflexive pullback \(f^{[*]}\mathcal{E}\) by $f^{[*]}\mathcal{E} \coloneqq (f^{*}\mathcal{E})^{\vee\vee}$.

Let $\alpha$ be a movable $1$-cycle on $X$ 
(i.e.,\,a codimension one cycle such that 
the intersection number $D\cdot \alpha $ is non-negative for any effective Cartier divisor $D$ on $X$). 
The {\it slope of $\mathcal{E}$ with respect to $\alpha$} is defined by 
$$
\mu_{\alpha }(\mathcal{E}) \coloneqq \frac{ c_1(\mathcal{E})\cdot\alpha}{{\rk } \mathcal{E}}.
$$
The maximum slope $\mu_{\alpha }^{\max}(\mathcal{E})$ is defined by the supremum of $\mu_{\alpha }(\mathcal{F})$ where \(\mathcal{F}\) runs over all the non-zero  subsheaves $\mathcal{F} \subseteq \mathcal{E}$ and the minimum slope 
$\mu_{\alpha }^{\min}(\mathcal{E})$ is defined by the infimum of $\mu_{\alpha }(\mathcal{Q})$ where \(\mathcal{Q}\) runs over all the torsion-free quotients  $\mathcal{E} \rightarrow \mathcal{Q}$. 
The sheaf $\mathcal{E} $ is said to be $\alpha$-{\it semistable} 
if $\mu_{\alpha }^{\max}(\mathcal{E})=\mu_{\alpha }(\mathcal{E})$.
By the Harder-Narasimhan filtration,  
there exists the unique saturated semi-stable subsheaf $\mathcal{E}_{\max} \subseteq \mathcal{E}$, 
called the {\it maximal destabilizing subsheaf} of $\mathcal{E}$, such that 
$\mu_{\alpha }(\mathcal{E}_{\max}) = \mu_{\alpha }^{\max}(\mathcal{E})$.

The tangent sheaf $\mathcal{T}_X$ of $X$ is defined by the reflexive hull $\mathcal{T}_X\coloneqq (j_{*} \mathcal{T}_{X_{\reg}})^{\vee \vee}$ 
of the direct image sheaf $j_{*}  \mathcal{T}_{X_{\reg}}$, 
where $j\colon  X_{\reg} \to X$ is the natural inclusion and $\mathcal{T}_{X_{\reg}}$ is the tangent bundle on $X_{\reg}$. 
Similarly, the reflexive cotangent sheaves of degree \(p\) is defined by $\Omega_{X}^{[p]}\coloneqq (j_{*} \Omega_{X_{\reg}}^{p})^{\vee \vee}$.

A \textit{foliation} on  \(X\) is a saturated  subsheaf \(\mathcal{F}\subseteq \mathcal{T}_X\) 
(i.e., the quotient \(\mathcal{T}_X/\mathcal{F}\) is torsion-free) such that  \(\mathcal{F}\) is closed under the Lie bracket. 
The rank \(r\) of the foliation \(\mathcal{F}\) is defined by that of \(\mathcal{F}\) and the codimension of \(\mathcal{F}\) is defined by \(q\coloneqq \dim X-r\). 
The canonical class \(K_{\mathcal{F}}\) of \(\mathcal{F}\) is a Weil divisor $-K_{\mathcal{F}}$ on \(X\) 
such that \(\mathcal{O}_X(-K_{\mathcal{F}})\cong\det\mathcal{F}\). 

Let \(X^\circ\subseteq X_{\text{reg}}\) be the open set where \(\mathcal{F}|_{X_{\text{reg}}}\) is a subbundle of \(\mathcal{T}_{X_{\text{reg}}}\).
A \textit{leaf} of \(\mathcal{F}\) is a maximal connected and immersed holomorphic submanifold \(L\subseteq X^\circ\) such that \(\mathcal{T}_L=\mathcal{F}|_L\).
The foliation \(\mathcal{F}\) is said to be \textit{algebraically integrable} if every leaf passing through a general point is algebraic 
(i.e.,\,the leaf is open in its Zariski closure). 
The \(r\)-th wedge product gives rise to a non-zero morphism \(\mathcal{O}_X(-K_{\mathcal{F}})\to (\wedge^{[r]}\mathcal{T}_X)\). 
The foliation \(\mathcal{F}\) is said to be \textit{weakly regular} 
if the induced dual morphism \(\Omega_{X}^{[r]} [\otimes ] \mathcal{O}_{X} (-K_{\mathcal{F}}) \to \mathcal{O}_{X}\)  
is a surjective sheaf morphism (see \cite[Section 5]{Dru21}).
An algebraically integrable and weakly regular foliation determines an equidimensional fibration by \cite[Theorem 17]{DGP20}, 
which plays a crucial role in obtaining an everywhere-defined MRC fibration in the proof of Theorem \ref{thm-positively-curved}.

\subsection{Positivity of coherent sheaves on projective varieties}\label{subsec-positivity}

In this subsection, we review several notions of positivity defined for torsion-free sheaves and establish their key properties.

\begin{defn}\label{defn-posi}
Let  $X$ be a normal projective variety of dimension $n$ and  $\mathcal{E}$ be a  sheaf on $X$. 

\begin{enumerate}
\item[$(1)$]  $\mathcal{E}$ is said to be {\it ample} (resp.\,{\it strictly nef, nef}) if the tautological line bundle $\mathcal{O}_{\mathbb{P}_X(\mathcal{E})}(1)$ is ample (resp.\,strictly nef, nef) on the projectivization $\mathbb{P}_X(\mathcal{E})\coloneqq \text{Proj}(\text{Sym}^{\bullet}\mathcal{E})$. 

\item[$(2)$]$\mathcal{E}$ is said to be {\it almost nef} 
if there exist countably many proper subvarieties $Z_{i} \subseteq X$  ensuring that
the sheaf $\mathcal{E} |_{C} \coloneqq \mathcal{E} \otimes \mathcal{O}_C$ is nef 
for any  curve $ C \not \subseteq \cup_{i} Z_i$ (see \cite[Definition 6.4]{DPS01} and \cite[Definition 3.6]{LOY20}). 
The notation $\mathbb{S}(\mathcal{E})$ denotes the smallest countable union $\cup_{i} Z_i$ with the above property. 

\end{enumerate}

Henceforth, we further assume that $\mathcal{E}$ is torsion-free. 
\begin{enumerate}
\item[$(3)$] $\mathcal{E}$ is said to be {\it pseudo-effective}  if
 for any $a \in \mathbb{Z}_+$ and any ample Cartier divisor $A$ on $X$, there exists 
$b \in \mathbb{Z}_+$ such that $\Sym^{[ab]}( \mathcal{E}  ) \otimes A^{b}$ is globally generated at a general point of $X$ (see \cite[Definition 3.20]{Nak04} and \cite[Definition 7.1]{BDPP13}; cf.~\cite{Vie83}).

\item[$(4)$]  $\mathcal{E}$ is said to be \textit{generically ample}  (resp.\,{\it generically nef}) if $\mu_{H_1\cdots H_{n-1}}^{\text{min}}(\mathcal{E}) >0$ (resp.\,$\ge 0$) holds for any ample Cartier divisors $H_1, \ldots, H_{n-1}$ on $X$
 (see \cite{Miy87}).

\item[$(5)$]   $\mathcal{E}$ is said to be \textit{positively curved} if $\mathcal{E}$ admits a $($possibly singular$)$ Hermitian metric $h$ such that 
$\log |u|_{h^{\vee}}$ is a psh function 
for any local section $u$ of $\mathcal{E}^{\vee}$, 
where $h^{\vee}$ is the dual metric defined by $h^{\vee}  \coloneqq {}^t\! h^{-1}$ (see \cite[Definition 19.1]{HPS18}, \cite[Definition 2.2.2]{PT18}). 
 \end{enumerate}
 \end{defn}
 
Our definition of the pseudo-effectivity of $\mathcal{E}$ differs from 
the pseudo-effectivity of $\mathcal{O}_{\mathbb{P}_X(\mathcal{E})}(1)$  even when $\mathcal{E}$ is locally free 
(see  \cite[Proposition 2.4]{Mat22} for the details and characterizations). 
Note that  $\mathcal{E}$ in (1) and (2) is not assumed to be torsion-free  
and $\mathcal{E} |_{C} \coloneqq \mathcal{E} \otimes \mathcal{O}_C$ in (2) may have a torsion. 

For torsion-free sheaves, we can summarize  relations among the above-mentioned notions by the following table:
\[
\xymatrix@C=30pt@R=25pt{
  \txt{ample} \ar@{=>}[r]\ar@{=>}[dd]&   \txt{strictly nef} \ar@{=>}[r] &\txt{nef}\ar@{=>}[r]\ar@{=>}[d]  &\txt{almost nef}\ar@{=>}[dd] \\
&   \text{positively curved} \ar@{=>}[r]& \text{pseudo-effective} \ar@{=>}[rd]& \\ 
\text{generically ample }  \ar@{=>}[rrr]&& & \text{generically nef}\\
}
\]
We now prove the non-trivial implications in the table.

\begin{prop}\label{pro-basic-implication}
Let \(\mathcal{E}\) be a torsion-free sheaf on a normal projective variety \(X\).
Then, we have$:$
\begin{enumerate}
\item[$(1)$] If \(\mathcal{E}\) is nef, then \(\mathcal{E}\)  is pseudo-effective.
\item[$(2)$]If \(\mathcal{E}\) is  pseudo-effective, then \(\mathcal{E}\)  is generically nef.
\item[$(3)$] If \(\mathcal{E}\) is almost nef, then \(\mathcal{E}\)  is  generically nef.
\item[$(4)$] If \(\mathcal{E}\) is positively curved, then \(\mathcal{E}\)  is  pseudo-effective.
\end{enumerate}
\end{prop}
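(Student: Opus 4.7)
The plan is to treat the four implications separately, using algebraic and intersection-theoretic methods for (1)--(3) and singular $L^2$-analytic input for (4). For (1), I would use that nefness of $\mathcal{E}$ translates to $\mathcal{O}_{\mathbb{P}_X(\mathcal{E})}(1)$ being nef, and a standard Kleiman/Nakai argument then shows that $\mathcal{O}_{\mathbb{P}_X(\mathcal{E})}(a)\otimes\pi^*A$ is ample on $\mathbb{P}_X(\mathcal{E})$ for every $a\in\mathbb{Z}_+$ and every ample $A$ on $X$, where $\pi\colon\mathbb{P}_X(\mathcal{E})\to X$. Its $b$-th power $\mathcal{O}(ab)\otimes\pi^*A^b$ is then very ample for $b\gg 0$, and pushing it down would give that $\Sym^{ab}\mathcal{E}\otimes A^b$ is globally generated on the locally free locus $X_{\mathcal{E}}$. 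Since $\Sym^{ab}\mathcal{E}$ and $\Sym^{[ab]}\mathcal{E}$ agree on $X_{\mathcal{E}}$, whose complement has codimension at least $2$, the reflexive sheaf $\Sym^{[ab]}\mathcal{E}\otimes A^b$ would be globally generated at a general point of $X$.

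For (2), I would take any torsion-free quotient $\mathcal{E}\twoheadrightarrow\mathcal{Q}$ of rank $r$; the induced surjection $\Sym^{[ab]}\mathcal{E}\otimes A^b\twoheadrightarrow\Sym^{[ab]}\mathcal{Q}\otimes A^b$ on a big open set makes the latter globally generated at a general point. Restricting to a very general complete intersection curve $C=H_1\cap\cdots\cap H_{n-1}$ that meets the generation locus and avoids the non-locally-free loci, $\Sym^{ab}(\mathcal{Q}|_C)\otimes A^b|_C$ is a vector bundle on $C$ globally generated at a general point. The image of its evaluation map is then a full-rank subsheaf whose determinant, being a quotient of a trivial bundle and thus possessing a non-zero global section, has non-negative degree on $C$. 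This would give the inequality
\[
\tfrac{ab}{r}\,c_1(\mathcal{Q})\cdot H_1\cdots H_{n-1}+b\,c_1(A)\cdot H_1\cdots H_{n-1}\geq 0,
\]
and dividing by $ab/r$ and letting $a\to\infty$ would yield the generic nefness $c_1(\mathcal{Q})\cdot H_1\cdots H_{n-1}\geq 0$.

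For (3), I would note that a very general complete intersection $C=H_1\cap\cdots\cap H_{n-1}$ misses the countable union $\mathbb{S}(\mathcal{E})$ and the non-locally-free loci, so $\mathcal{E}|_C$ is a nef vector bundle on the smooth curve $C$. For any torsion-free quotient $\mathcal{E}\twoheadrightarrow\mathcal{Q}$, the quotient $(\mathcal{Q}|_C)/\textup{torsion}$ is then a nef vector bundle quotient of $\mathcal{E}|_C$, hence has non-negative degree, which is precisely $c_1(\mathcal{Q})\cdot H_1\cdots H_{n-1}\geq 0$.

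Part (4) is the deepest and will require analytic input. The positively curved singular Hermitian metric $h$ on $\mathcal{E}$ induces a positively curved metric on each reflexive symmetric power $\Sym^{[ab]}\mathcal{E}$, constructed on $X_\mathcal{E}$ and extended across codimension at least $2$ by reflexivity. Tensoring with a smooth positively curved metric on $A^b$ yields a positively curved metric on $\Sym^{[ab]}\mathcal{E}\otimes A^b$ with strictly positive curvature contribution from $A$. An Ohsawa-Takegoshi- or Skoda-type $L^2$ extension/division argument should then produce, for $b$ sufficiently large relative to $a$, enough global sections to generate $\Sym^{[ab]}\mathcal{E}\otimes A^b$ at a very general point. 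The main obstacle will be the simultaneous singularities of $X$ and of $h$: I would handle this either by passing to a functorial log resolution and working with reflexive pullbacks (in the spirit of Paun-Takayama and Hosono-Iwai-Matsumura), or by invoking an $L^2$ extension theorem adapted to projective klt varieties.
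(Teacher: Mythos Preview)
Your arguments for (1) and (3) are correct and essentially match the paper: for (1) the paper simply cites \cite[Proposition~2.4\,(4)]{Mat22}, whose content is precisely your Kleiman/push-forward argument, and your (3) is the same curve-restriction argument the paper gives (the paper phrases it as ``$\mathcal{Q}$ is almost nef, hence $\mathcal{Q}|_C$ is a nef vector bundle'', which is equivalent to your formulation).

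For (2) you take a genuinely different route. The paper reduces to rank one: it invokes \cite[Lemma~1.4]{Vie83} to say that any torsion-free quotient $\mathcal{Q}$ of a pseudo-effective sheaf is again pseudo-effective, hence so is $\det\mathcal{Q}$; then $c_1(\mathcal{Q})$ is a pseudo-effective class and $c_1(\mathcal{Q})\cdot H_1\cdots H_{n-1}\geq 0$ follows immediately. Your approach instead keeps the higher-rank sheaf, restricts to a curve, and extracts the slope inequality from the asymptotics in $a$. Both are valid; the paper's route is shorter and more conceptual (it isolates ``pseudo-effective $\Rightarrow$ $c_1$ pseudo-effective'' as the only real content), while yours is more self-contained and avoids the external lemma.

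For (4), what you have is a plan rather than a proof, and you correctly flag the obstacle. The paper does not attempt to reprove this either: it cites \cite[Proposition~2.4\,(2)]{Mat22}, where the passage from a positively curved singular metric to generic global generation of $\Sym^{[ab]}\mathcal{E}\otimes A^b$ is carried out via resolution and $L^2$-extension in the spirit of P\u{a}un--Takayama. Your sketch points in exactly this direction, so there is no wrong idea, but the details (handling simultaneously the singularities of $X$ and of $h$) are genuinely non-trivial and are best handled by citing the established reference rather than re-deriving them inline.
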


\begin{proof}
(1), (4). Conclusions (1), (4) are direct consequences of \cite[Proposition 2.4 (4), (2)]{Mat22}, respectively.

\smallskip 
(2). 
Let $\{H_{i}\}_{i=1}^{n-1}$ be  ample Cartier divisors on $X$. 
It is sufficient to show that an arbitrary torsion-free quotient \(\mathcal{E}\to\mathcal{Q} \to 0\) 
satisfies that \(c_1(\mathcal{Q})\cdot H_1\cdots H_{n-1}\ge 0\). 
By  \cite[Lemma 1.4]{Vie83}, 
the determinant sheaf and any torsion-free quotient of pseudo-effective sheaves are pseudo-effective. 
This indicates that $\det(\mathcal{Q})$ is a pseudo-effective sheaf. 
Hence, by \cite[Proposition 2.4 (1)]{Mat22}, there exists an ample Cartier divisor $A$ on $X$ such that 
$(\det \mathcal{Q})^{[m]}\otimes A$ is generically globally generated for any $m \in \N$. 
This implies that $c_{1}(\mathcal{Q})$ is pseudo-effective, 
and thus we have \(c_{1}(\mathcal{Q})\cdot H_1\cdots H_{n-1}\ge 0\).

\smallskip 
(3). 
Let \(\mathcal{E}\to\mathcal{Q} \to 0\) be a torsion-free quotient. 
Then, the quotient sheaf  $\mathcal{Q} $ is almost nef by Lemma \ref{lem-elementary-lem-almostnef} (4). 
We may assume that $H_{i}$ is a very general hypersurface in the linear system of a very ample Cartier divisor. 
Then, the curve \(C=H_1 \cap \cdots\cap H_{n-1}\) does not intersect with $X \setminus (X_{\reg} \cap X_{\mathcal{E}})$ 
or  $\mathbb{S}(\mathcal{E})$. 
Hence, the restriction $\mathcal{Q}|_{C} $ is a nef vector bundle, 
and we obtain \(c_{1}(\mathcal{Q})\cdot H_1\cdots H_{n-1}= c_1(\mathcal{Q}|_C) \ge 0\). 
\end{proof}
We confirm that the converse implications in the table do not hold.  
\begin{rem}
(1) 
A generically nef vector bundle is not necessarily pseudo-effective nor almost nef. 
Indeed, the tangent bundle of any K3 surface (known to be stable) is generically nef (see \cite[Theorem 3.1]{Miy87}), 
but is not almost nef and not pseudo-effective  (see \cite[Theorem 7.7]{BDPP13} and \cite[Theorem 1.6]{HP19}).

\smallskip 
(2)  
A pseudo-effective vector bundle is almost nef, but this does not hold for torsion-free sheaves. 
The first assertion follows from \cite[Theorem 6.2]{BDPP13}. 
The second assertion is first confirmed by Example \ref{exa-Gachet20}, 
and can also be understood more uniformly by Corollary \ref{ques-high-dim-not-almost-but=pseudo-effective}.

\smallskip 
(3)  An almost nef vector bundle is not necessarily pseudo-effective. 
Indeed, there exists an almost nef  vector bundle on a Hirzebruch surface that is not pseudo-effective (see \cite{EFI23}). 
\end{rem}

\subsection{Fundamental properties of almost nef sheaves}\label{subsec-almostnef}
In this subsection, we establish the key properties of almost nef sheaves. 
Let us begin with the nefness of sheaves.

\begin{lem}\label{lem-elementary-pro-nef}
Let $\mathcal{E}$ and \(\mathcal{F}\) be sheaves on a normal projective variety \(X\). 
Then, we have the following assertions. 
\begin{enumerate}
\item[$(1)$] The sheaf $\mathcal{E}$ is nef $($resp.\,strictly nef$)$ if and only if for any finite morphism $ \nu \colon  C \to X$  from any smooth curve $C$, the pullback $\nu^{*}\mathcal{E}$ is nef 
$($resp.\,strictly nef$)$. 

\item[$(2)$]  Let $f \colon  Y \to X$ be a morphism between normal projective varieties.
\begin{enumerate}
\item[$\bullet$] If $\mathcal{E}$ is nef, then $f^{*}\mathcal{E}$ is nef.  
\item[$\bullet$] If $f \colon  Y \to X$ is finite and $\mathcal{E}$ is  strictly nef, then  $f^{*}\mathcal{E}$ is strictly nef.
\item[$\bullet$] We further assume that $f \colon  Y \to X$ is surjective.
If $f^{*}\mathcal{E}$ is nef $($resp.\,strictly nef$)$, then $\mathcal{E}$ is nef $($resp.\,strictly nef$)$.
\end{enumerate}

\item[$(3)$]  
Let $\mathcal{E} \to \mathcal{F}$ be a surjective sheaf morphism. 
If  $\mathcal{E}$ is nef, then $\mathcal{F}$ is nef. 
If $X$ is an irreducible curve, then the same statement holds 
under the weaker assumption that $\mathcal{E} \to \mathcal{F}$ is generically surjective.
In particular, if \(\mathcal{E}\) is a nef sheaf on a smooth curve,
 then the locally free sheaf $\mathcal{E}/\Tor$ is nef.
Here $\mathcal{E}/\Tor$ is the quotient sheaf of $\mathcal{E}$ with its torsion subsheaf.

\item[$(4)$] 
If $X$ is a smooth curve and $\mathcal{E}^{\vee\vee}=\mathcal{E}/ \Tor$ is nef, 
then $\mathcal{E}$ is nef.

\end{enumerate}
\end{lem}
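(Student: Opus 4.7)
The plan is to reduce each statement to a property of the tautological line bundle $\mathcal{O}_{\mathbb{P}_X(\mathcal{E})}(1)$ via the projectivization $\pi\colon \mathbb{P}_X(\mathcal{E})\to X$, exploiting its behavior under base change $\mathbb{P}_Y(f^{*}\mathcal{E})\cong Y\times_X \mathbb{P}_X(\mathcal{E})$ (with the tautological bundle pulling back to the tautological bundle) and under surjections $\mathcal{E}\twoheadrightarrow \mathcal{F}$, which yield closed embeddings $\mathbb{P}_X(\mathcal{F})\hookrightarrow \mathbb{P}_X(\mathcal{E})$ compatible with the tautological bundles.

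For (1) and (2), the forward directions follow at once from base change together with the fact that the pullback of a (strictly) nef line bundle along an arbitrary morphism (resp.\,a finite surjective morphism) remains (strictly) nef. For the converse directions --- the ``only if'' half of (1) and the third bullet of (2) --- I would invoke the curve criterion for nefness: any irreducible curve $D\subseteq \mathbb{P}_X(\mathcal{E})$ either lies in a fiber of $\pi$, where $\mathcal{O}(1)$ is ample, or projects onto a curve $C\subseteq X$. Normalizing $C$ to $\nu\colon \widetilde{C}\to X$ and base-changing then lifts $D$ to a curve in $\mathbb{P}_{\widetilde{C}}(\nu^{*}\mathcal{E})$ with the same intersection number against the tautological bundle, so nefness of $\nu^{*}\mathcal{E}$ transfers back. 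The strict nef analogue is parallel, with strict positivity replacing nonnegativity on curves not contained in a fiber.

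For (3), a surjection $\mathcal{E}\twoheadrightarrow \mathcal{F}$ induces a closed embedding of projectivizations, and the restriction of a nef line bundle to a closed subscheme remains nef, giving the first assertion. To handle the curve case with only generic surjectivity, I would let $\mathcal{G}\subseteq \mathcal{F}$ denote the image sheaf: $\mathcal{G}$ is a genuine quotient of $\mathcal{E}$, hence nef, while $\mathcal{F}/\mathcal{G}$ is supported on finitely many points, and it then remains to check that enlarging $\mathcal{G}$ to $\mathcal{F}$ by such a torsion extension preserves nefness. For (4), the same type of analysis applies to the canonical exact sequence $0\to \Tor(\mathcal{E})\to \mathcal{E}\to \mathcal{E}^{\vee\vee}\to 0$: the space $\mathbb{P}(\mathcal{E})$ decomposes into $\mathbb{P}(\mathcal{E}^{\vee\vee})$, on which $\mathcal{O}(1)$ is nef by hypothesis, together with additional components supported over the finite torsion locus, each of which is a projective space where $\mathcal{O}(1)$ is ample. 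The ``in particular'' clause in (3) then follows by applying the surjective case to the everywhere-surjective quotient map $\mathcal{E}\twoheadrightarrow \mathcal{E}/\Tor$.

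The main obstacle I expect lies in the generically surjective case of (3) and in (4): both require describing precisely how $\mathbb{P}_X(\mathcal{E})$ behaves at points where $\mathcal{E}$ has torsion, and verifying that $\mathcal{O}(1)$ remains nef on the extra components appearing there. This reduces to a local computation near each torsion point, but it is the step that genuinely separates these assertions from their locally free analogues and must be carried out with some care.
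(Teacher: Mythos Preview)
Your approach is correct and, for parts (1)--(3), essentially parallels the standard arguments that the paper invokes by citation to \cite{LOY20}, \cite{Laz04}, and \cite{LiOY18}. (There is a small slip in your labeling: the direction of (1) you call the ``only if'' half and treat via the curve criterion is actually the ``if'' direction --- proving $\mathcal{E}$ is nef assuming each $\nu^*\mathcal{E}$ is --- but the argument you give there is the right one.)

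For (4) the paper takes a shorter, more algebraic route than you propose. Instead of analyzing the extra components of $\mathbb{P}_X(\mathcal{E})$ over the torsion locus, it invokes the criterion (\cite[Proposition~3.2]{LOY20}) that a sheaf on a smooth curve is nef if and only if every quotient \emph{line bundle} $\mathcal{E}\to\mathcal{L}$ has $\deg\mathcal{L}\ge 0$. Given such a quotient, $\mathcal{L}$ is torsion-free and hence reflexive on the smooth curve, so the map factors as $\mathcal{E}\to\mathcal{E}^{\vee\vee}\to\mathcal{L}$ by the universal property of the reflexive hull; the second arrow remains surjective, and then $\deg\mathcal{L}\ge 0$ follows from nefness of $\mathcal{E}^{\vee\vee}$ via (3). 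This bypasses the local computation near torsion points that you flag as the main obstacle. Your geometric picture also works --- any curve in $\mathbb{P}_X(\mathcal{E})$ dominating $X$ must lie in the closed subscheme $\mathbb{P}_X(\mathcal{E}^{\vee\vee})$, while vertical curves sit in fibers where $\mathcal{O}(1)$ is ample --- but the factoring trick is cleaner and avoids having to justify that decomposition. The same quotient-line-bundle criterion would likewise streamline your treatment of the generically surjective case in (3).
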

\begin{proof}
Conclusions (1), (3) follow from \cite[Proposition 3.2, Corollary 3.3 and Corollary 3.5]{LOY20}, 
and Conclusion (2) follows from \cite[Lemma 6.2.8 (ii) and Theorem 6.2.12 (i)]{Laz04} and \cite[Proposition 2.2]{LiOY18}, 
by noting that the arguments in the smooth case still work in the singular case. 

By \cite[Proposition 3.2]{LOY20}, 
it is sufficient for (4) to show that  a quotient line bundle $\mathcal{E} \to \mathcal{L}$ satisfies that 
$\deg(\mathcal{L}) \ge 0 $. 
Note that any torsion-free sheaves on $X$ are locally free since $X$ is a smooth curve. 
By the universal property of reflexive hull, 
the morphism $\mathcal{E} \to \mathcal{L}$ 
factors through $\mathcal{E} \to \mathcal{E}^{\vee\vee} \to \mathcal{L}$.
Since $\mathcal{E}^{\vee\vee} \to \mathcal{L}$ is surjective, 
we obtain $\deg(\mathcal{L}) \ge 0 $ by (3). 
\end{proof}

Lemma \ref{lem-elementary-pro-nef} (4) is not true unless $X$ is a smooth curve by the following example. 

\begin{ex}\label{exa-ideal-sheaf-blown-up}
Let \(X\) be a smooth projective surface and \(\mathcal{I}\) an ideal sheaf at one point.  
The sheaf $\mathcal{I}$ is torsion-free and the reflexive hull $\mathcal{I}^{\vee\vee}=\mathcal{O}_{X}$ is nef. 
However, the sheaf $\mathcal{I}$ itself is not nef (and  even not almost nef).
Indeed, taking the blow up $\pi \colon  \widetilde{X} \to X$ along $\mathcal{I}$,  
we obtain $\pi^{*} \mathcal{I}/\Tor = \mathcal{O}_{\widetilde{X}}(- E)$, where $E$ is the \(\pi\)-exceptional divisor.
If $\mathcal{I}$ is nef (or almost nef),  so is  $\pi^{*} \mathcal{I}/\Tor= \mathcal{O}_{\widetilde{X}}(- E)$ 
by Lemma \ref{lem-elementary-pro-nef} (2). 
This is a contradiction. 

This example serves as a comparison with other types of positivity. 
The ideal sheaf $\mathcal{I}$ is pseudo-effective, but its reflexive pullback $\pi^{[*]} \mathcal{I} = \pi^{*} \mathcal{I}/\Tor$ is not pseudo-effective. 
Hence, the reflexive pullback of a pseudo-effective torsion-free sheaf is not necessarily pseudo-effective. 
Furthermore, the ideal sheaf $\mathcal{I}$ is not strongly pseudo-effective in the sense of Wu (see \cite[Remark 4]{Wu22}). 
\end{ex}

At the end of this subsection, we prove the key properties of almost nef sheaves. 
Some properties in Lemma \ref{lem-elementary-lem-almostnef} are not expected for pseudo-effective sheaves 
(even for positively curved sheaves). 
For this reason, Theorem \ref{thm-positively-curved} requires a different proof from that of Theorem \ref{thm-almost-nef}. 
For example, Lemma \ref{lem-elementary-lem-almostnef} (6) clearly does not hold for positively curved sheaves 
by Example \ref{exa-ideal-sheaf-blown-up}. 

\begin{lem}
\label{lem-elementary-lem-almostnef} 
Let $\mathcal{E}$ be a $($not necessarily torsion-free$)$ sheaf on a normal projective variety \(X\). 
Then, we have$:$
\begin{enumerate}

\item[$(1)$]  $\mathcal{E}$ is almost nef if and only if 
there exist  countably many proper subvarieties $Z_{i} \subseteq X$ such that 
$\nu^{*}\mathcal{E}$ is nef for any finite morphism $\nu \colon  C \to X$, 
where $C$ is a smooth curve with $ C \not \subseteq \cup_{i} Z_i$.

\item[$(2)$] Let $f \colon  Y \to X$ be a surjective morphism. 
Then, the sheaf $\mathcal{E}$ is almost nef if and only if $f^{*}\mathcal{E}$ is almost nef.

\item[$(3)$] If $\mathcal{E}$ is almost nef, then $\Sym^{[m]}\mathcal{E}$ and $\wedge^{[m]} \mathcal{E}$ are almost nef. 

\item[$(4)$] If $\mathcal{E} \to \mathcal{F}$ is a generically surjective sheaf morphism 
and $\mathcal{E}$ is almost nef, then so is $\mathcal{F}$.
In particular, together with $(2)$, if $\mathcal{E}$ is almost nef, 
then the reflexive pullback $f^{[*]} \mathcal{E}$ is almost nef for any morphism  $f \colon  Y \to X$.

\item[$(5)$] If \(\mathcal{E}\) and \(\mathcal{F}\) are almost nef, then \(\mathcal{E}\otimes\mathcal{F}\) is almost nef.

\item[$(6)$] 
If \(\mathcal{E}\) is almost nef, then any non-zero sheaf morphism \(\mathcal{E}\to \mathcal{O}_{X}\) is surjective.

\item[$(7)$] Let \(\mathcal{E}\to \mathcal{F}\) be a surjective morphism such that 
 $\mathcal{F}$ is locally free and \(\textup{rk}\,\mathcal{E}=\textup{rk}\,\mathcal{F}\) holds. 
If \(\mathcal{F}\) is almost nef, then \(\mathcal{E}\) is almost nef.

\item[$(8)$]
Assume that \(\mathcal{E}\) is torsion-free. 
If \(f \colon Y\to X\) is a  surjective morphism such that \(f^*\mathcal{E}/\Tor\) is locally free and almost nef,
then \(\mathcal{E}\) is almost nef.
\end{enumerate}
\end{lem}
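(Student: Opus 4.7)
The unifying strategy is to reduce every assertion to the smooth-curve case and invoke the corresponding part of Lemma \ref{lem-elementary-pro-nef}. Throughout, if $\mathcal{E}$ is almost nef with bad locus $\mathbb{S}(\mathcal{E})=\bigcup_i Z_i$, every test curve will be chosen to avoid this countable union, which is possible by a Bertini-type argument since each $Z_i$ is a proper subvariety. Item (1) follows by normalizing a given test curve and applying Lemma \ref{lem-elementary-pro-nef} (1), (2). For (2), with $f\colon Y\to X$ surjective, the bad locus in $Y$ may be taken as $\bigcup_i f^{-1}(Z_i)$ together with a general collection of fibers, and Lemma \ref{lem-elementary-pro-nef} (2) transfers nefness in either direction; conversely, any curve in $X$ avoiding the $Z_i$ lifts to a curve in $Y$ dominating it. Items (3) and (5) hold because symmetric powers, wedge powers, and tensor products of nef vector bundles on a smooth curve are nef, while on a general test curve the reflexive operations agree with the ordinary ones. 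Item (4) is immediate from Lemma \ref{lem-elementary-pro-nef} (3) applied on a general smooth curve; the reflexive-pullback assertion follows because $f^{[*]}\mathcal{E}$ is generically a quotient of $f^*\mathcal{E}$, once one first reduces to surjective $f$ by factoring through its image.

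The main technical step is item (6). Let $\varphi\colon \mathcal{E}\to \mathcal{O}_X$ be a nonzero morphism with image the ideal sheaf $\mathcal{I}\coloneqq \mathrm{Im}(\varphi)$ cutting out a subscheme $Z\subseteq X$. Arguing by contradiction, suppose $Z\neq\varnothing$. The key geometric construction is a smooth curve $\nu\colon C\to X$ that meets $Z$ transversally at some point while avoiding $\bigcup_i Z_i$: take a general complete-intersection curve through a chosen point of $Z$, using that each $Z_i$ is proper and only countably many appear. Pulling back, $\nu^*\varphi\colon \nu^*\mathcal{E}\to \mathcal{O}_C$ has image an ideal sheaf $\mathcal{O}_C(-D)$ with $D$ a nonzero effective divisor supported on $\nu^{-1}(Z)$. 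Since $\nu^*\mathcal{E}$ is nef and surjects onto $\mathcal{O}_C(-D)$, Lemma \ref{lem-elementary-pro-nef} (3) forces $\deg \mathcal{O}_C(-D)\ge 0$, contradicting $D\gneq 0$. Arranging the curve to meet $Z$ transversally while simultaneously dodging the countable bad locus is the step I expect to require the most care.

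Items (7) and (8) are then short applications of the curve criterion in (1). For (7), restricting $\mathcal{E}\twoheadrightarrow \mathcal{F}$ to a general smooth curve $\nu\colon C\to X$ and quotienting by torsion yields a surjection of vector bundles of equal rank on $C$, which must therefore be an isomorphism; thus $\nu^*\mathcal{E}/\Tor\cong \nu^*\mathcal{F}$ is nef, and Lemma \ref{lem-elementary-pro-nef} (4) lifts nefness to $\nu^*\mathcal{E}$ itself. For (8), applying (7) to the tautological surjection $f^*\mathcal{E}\twoheadrightarrow f^*\mathcal{E}/\Tor$ of equal-rank sheaves on $Y$ shows that $f^*\mathcal{E}$ is almost nef on $Y$, and (2) then descends almost nefness to $\mathcal{E}$ on $X$.
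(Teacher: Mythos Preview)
Your overall strategy matches the paper's: reduce to smooth curves and invoke Lemma~\ref{lem-elementary-pro-nef}. Items (1), (3)--(8) are essentially identical to the paper's arguments; in particular your treatment of (6) and (7) is the same as the paper's.

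The one place where your sketch has a genuine gap is the reverse direction of (2). You write ``any curve in $X$ avoiding the $Z_i$ lifts to a curve in $Y$ dominating it,'' but you never specify how to produce the $Z_i$ in $X$ from the bad locus $\mathbb{S}(f^*\mathcal{E})\subseteq Y$. The obvious candidate---taking images $f(W_j)$ of the components $W_j$ of $\mathbb{S}(f^*\mathcal{E})$---fails: some $W_j$ may dominate $X$, in which case $f(W_j)=X$ is not proper. The paper handles this by splitting $\mathbb{S}(f^*\mathcal{E})$ into components $\mathbb{S}_1$ that do \emph{not} dominate $X$ and components that do; the bad locus in $X$ is then $f(\mathbb{S}_1)$ together with the non-flat locus of $f$. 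For a curve $C\subseteq X$ avoiding this set, one checks $f^{-1}(C)\not\subseteq\mathbb{S}(f^*\mathcal{E})$: it avoids $\mathbb{S}_1$ by construction, and if it were contained in a dominant component $T$, then comparing fiber dimensions over a flat point of $C$ forces $\dim T=\dim Y$, a contradiction. Only then can one take a horizontal curve $C_Y\subseteq f^{-1}(C)$ outside $\mathbb{S}(f^*\mathcal{E})$ and conclude. Your sketch skips this dimension argument entirely, and without it the lifting step is unjustified.
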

\begin{proof}
(1). The part of ``only if''  is clear from definition. 
To check the part of  ``if,'' 
we take any curve $ C$ with $C \not \subseteq \cup_{i} Z_i$. 
Then, for the normalization \(\nu \colon \overline{C}\to C\), 
the pullback  \(\nu^*(\mathcal{E}|_C)\) is nef by  assumption, 
which indicates that \(\mathcal{E}|_C\) is nef.

\smallskip
(2).
Assume that \(\mathcal{E}\) is almost nef. 
Take a finite morphism $\nu_Y \colon  C_Y \to Y$ with a smooth curve $C_{Y}$ 
such that $\nu_Y(C_Y)$ is not contained in $ f^{-1}(\mathbb{S}(\mathcal{E}))$, 
by noting that $ f^{-1}(\mathbb{S}(\mathcal{E}))$ is still a countable union of proper subvarieties.  
We may assume that $C_X\coloneqq f \circ \nu_{Y}(C_{Y})$ is not point. 
Since $\mathcal{E}|_{C_X}$ is nef, 
the pullback $\nu_Y^{*}(f^*\mathcal{E})= (f|_{C_{X}} \circ \nu_{Y})^{*} (\mathcal{E}|_{C_X})$ is nef. 
This indicates that  \(f^*\mathcal{E}\) is almost nef by (1).

Assume that \(f^*\mathcal{E}\) is almost nef. 
Let \(\mathbb{S}_1(f^*\mathcal{E})\subseteq \mathbb{S}(f^*\mathcal{E})\) 
be the countable union of subvarieties in  $\mathbb{S}(f^*\mathcal{E})$ that does not dominate \(X\).
Let \(\mathbb{S}(\mathcal{E})\) be the union of the image \(f(\mathbb{S}_1(f^*\mathcal{E}))\) and the non-flat locus 
of \(f \colon  Y \to X\). 
Note that the non-flat locus is a proper subvariety by the generic flatness. 

For a curve \(C\) with $ C  \not \subseteq \mathbb{S}(\mathcal{E})$, 
we first claim that  \(f^{-1}(C)\not\subseteq \mathbb{S}(f^*\mathcal{E})\).
By our choice of \(C\), we have \(f^{-1}(C)\not\subseteq \mathbb{S}_1(f^*\mathcal{E})\).
Suppose that \(f^{-1}(C)\subseteq \mathbb{S}(f^*\mathcal{E})\backslash\mathbb{S}_1(f^*\mathcal{E})\).
Take an irreducible component \(T\) of \(\mathbb{S}(f^*\mathcal{E})\backslash\mathbb{S}_1(f^*\mathcal{E})\) containing \(f^{-1}(C)\).
Then, by the definition of $\mathbb{S}_1(f^*\mathcal{E})$, the variety  \(T\) dominates \(X\).
Meanwhile, for a general point \(c\in C\), the generic flatness shows that 
\[
\dim T -\dim X=\dim T_c =\dim Y_c =\dim Y-\dim X. 
\]
This indicates \(\dim T=\dim Y\), which is a contradiction. 
Our claim is thus proved.

By the claim, we can take a horizontal curve \(C_Y\subseteq f^{-1}(C)\) such that 
$C \not \subseteq \mathbb{S}(f^*\mathcal{E})$. 
Since  \(f^*\mathcal{E}|_{C_Y}\) is nef, 
so is $\mathcal{E}|_{C}$. 
Using Lemma \ref{lem-elementary-pro-nef} (2), we complete the proof of (2).

\smallskip
(3), (4). Conclusions (3) and (4) follow from \cite[Proposition 3.8]{LOY20}.

\smallskip
(5).  
Let  \(C\) be a curve  such that \( C \not \subseteq \mathbb{S}(\mathcal{E})\cup\mathbb{S}(\mathcal{F})\), 
and let \(\nu \colon \overline{C}\to C\) be the normalization. 
By Lemma \ref{lem-elementary-pro-nef} (4), it is sufficient to show that 
$$
(\nu^*\mathcal{E}\otimes\nu^*\mathcal{F})^{\vee \vee} = (\nu^*\mathcal{E}\otimes\nu^*\mathcal{F})/\Tor 
=\nu^*\mathcal{E}/\Tor\otimes \nu^*\mathcal{F}/\Tor 
$$ 
is nef. 
By Lemma \ref{lem-elementary-pro-nef} (3), 
both $\nu^*\mathcal{E}/\Tor$ and $\nu^*\mathcal{F}/\Tor$ are nef. 
Consequently, since the tensor product of nef vector bundles is still nef, 
their tensor product \(\nu^*\mathcal{E}/\Tor\otimes \nu^*\mathcal{F}/\Tor\) is nef. 
This finishes the proof of (5).

\smallskip
(6). The conclusion is obvious when $X$ is a smooth curve since proper ideal sheaves on smooth curves are not nef. 
We will reduce the proof to the case where $X$ is a smooth curve. 
Let \(\mathcal{L}\) be the image of the non-zero sheaf morphism \(\mathcal{E}\to\mathcal{O}_{X}\).
Suppose that \(0\neq\mathcal{L}\subseteq\mathcal{O}_{X}\) is a proper ideal sheaf.
Consider the  exact sequence
\[
0\to \mathcal{L}\to\mathcal{O}_{X}\to \mathcal{O}_{X}/\mathcal{L}\to 0.
\]
Let \(C\) be a smooth curve such that $C \not \subseteq \mathbb{S}(\mathcal{E})$ holds and 
$C $ intersects with the support of the quotient sheaf $\mathcal{O}_{X}/\mathcal{L}$. 
Since the functor of the tensor product is right exact,  we obtain 
\[
\mathcal{L}|_C\to\mathcal{O}_C\to (\mathcal{O}_{X}/\mathcal{L})|_C = (\mathcal{O}_{C}/\mathcal{L}|_C)\to 0.
\]
Note that the left exactness might break down, so \(\mathcal{L}|_C\) is not necessarily an ideal sheaf of $\mathcal{O}_{C}$. 
Then, the condition of  \(\mathcal{O}_{C}/\mathcal{L}|_C\neq 0\) shows that  
the image \(\mathcal{L}\cdot\mathcal{O}_C \subseteq\mathcal{O}_C\) is a proper ideal sheaf of \(\mathcal{O}_C\).  
By the choice of $C$, the sheaf $\mathcal{E}|_{C}$ is nef, and so is $\mathcal{L}|_{C}$ (cf.~Lemma \ref{lem-elementary-pro-nef} (3)). 
By the surjectivity of the map \(\mathcal{L}|_C\to \mathcal{L}\cdot\mathcal{O}_C\), 
the proper ideal sheaf $\mathcal{L}\cdot\mathcal{O}_C$ is also nef, 
but this contradicts the fact in the case where $X$ is a smooth curve. 

\smallskip
(7). 
Let \(C\) be a curve such that \( C \not \subseteq \mathbb{S}(\mathcal{F}) \cup (X \setminus (X_{\reg} \cap X_{\mathcal{E}}))\).
By the right exactness of tensor products,
the induced sheaf morphism \(\mathcal{E}|_C\to \mathcal{F}|_C\) is surjective. 
Meanwhile, by the assumption of the rank, 
the sheaf morphism \(\mathcal{E}\to \mathcal{F}\) on $X_{\reg} \cap X_{\mathcal{E}}$ 
is an isomorphism on a (non-empty) Zariski open set. 
Hence, the induced sheaf morphism \(\mathcal{E}|_C\to \mathcal{F}|_C\) is generically injective.  
Then, we can conclude that \(\mathcal{F}|_C=(\mathcal{E}|_C)^{\vee\vee}\).
This implies that $\mathcal{E}|_C$ is nef  by Lemma \ref{lem-elementary-pro-nef} (5).

\smallskip
(8). 
The sheaf \(f^*\mathcal{E}\) is almost nef by Lemma \ref{lem-elementary-lem-almostnef} (7),   
and thus \(\mathcal{E}\) is almost nef by  Lemma \ref{lem-elementary-lem-almostnef} (2). 
\end{proof}

As the following example shows, the almost nefness is not well behaved by the reflexive pullback. 
This example will be generalized to the higher dimensional case in the proof of Corollary \ref{ques-high-dim-not-almost-but=pseudo-effective}.

\begin{ex}[{\cite[Remark 2.6]{Gac20}}]
\label{exa-Gachet20} 

Let $A$ be an abelian surface and let $i \colon  A \to A$ be 
the involution sending  \(a \in A\) to \(-a \in A\). 
Consider the natural quotient $\pi \colon  A \to X\coloneqq A/\left<i\right>$. 
Since \(\pi\) is a quasi-\'etale cover, 
the reflexive pullback $\pi^{[*]}\mathcal{T}_{X} =\mathcal{T}_A \cong \mathcal{O}_{A}^{\oplus 2}$ is nef.  
Meanwhile, as calculated in \cite[Remark 2.6]{Gac20}, 
there exists a divisorial sheaf $\mathcal{F}$ such that $\mathcal{T}_X \cong \mathcal{F}^{\oplus 2}$ and $\mathcal{F}^{\otimes 2}/\Tor \cong \mathcal{I}_{\text{Sing}(X)}$.
Then, the tangent sheaf $\mathcal{T}_{X}$ is not almost nef 
since \(\mathcal{T}_{X}|_C\) is not nef for a curve \(C\) intersecting with $X_{\sing}$. 
Note that the tangent sheaf $\mathcal{T}_{X}$ is pseudo-effective 
since $\mathcal{F}^{[2]} \cong \mathcal{O}_{X}$. 
This example also shows that pseudo-effective sheaves are not necessarily almost nef in the non-locally free case.
\end{ex}

At the end of this subsection, we compare the positivity of (Weil) divisors to the associated sheaf.

\begin{lem}\label{lem-weil-ref- pseudo-effective }
Let \(D\) be a Weil divisor on a normal projective variety \(X\), 
and let \(\mathcal{E}\coloneqq \mathcal{O}_X(D)\) be the divisorial sheaf associated with $D$. 
Then, we have$:$
\begin{enumerate}
\item[$(1)$] If \(\mathcal{E}=\mathcal{O}_X(D)\) is pseudo-effective  or almost nef 
in the sense of Definition \ref{defn-posi}, 
then  \(D\) is a pseudo-effective \(\mathbb{R}\)-Weil divisor $($cf.~\cite[Definition 2.4]{MZ18}$)$.

\item[$(2)$] Conversely,  if \(D\) is a pseudo-effective \(\mathbb{R}\)-Weil divisor, 
then \(\mathcal{E}=\mathcal{O}_X(D)\) is pseudo-effective, but not necessarily almost nef. 

\item[$(3)$] We further assume that $D$ is  \(\mathbb{Q}\)-Cartier. 
If \(\mathcal{E}=\mathcal{O}_X(D)\)  is almost nef, then \(D\) is pseudo-effective as $\mathbb{Q}$-Cartier divisors. 

\end{enumerate}
\end{lem}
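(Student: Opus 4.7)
The plan is to exploit that $\mathcal{E}=\mathcal{O}_X(D)$ has rank one, so the reflexive symmetric powers collapse to $\Sym^{[m]}\mathcal{E}=\mathcal{E}^{[m]}=\mathcal{O}_X(mD)$, and then translate each positivity notion on $\mathcal{E}$ into a concrete linear-equivalence or numerical statement about $D$. The three parts are largely independent, with part (2) being the most delicate because one must produce actual sections from purely asymptotic data.

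For part (1), if $\mathcal{E}$ is pseudo-effective, then Definition \ref{defn-posi} (3) provides, for each $a\in\N$ and each ample Cartier $A$, some $b\in\N$ with $\mathcal{O}_X(abD+bA)$ globally generated at a general point; choosing a non-vanishing section gives effective Weil divisors linearly equivalent to $abD+bA$, so that $D+\tfrac{1}{a}A$ is $\Q$-linearly equivalent to an effective $\Q$-Weil divisor. Passing to the limit $a\to\infty$ places $D$ in the closure of the effective cone, yielding pseudo-effectivity in the sense of \cite[Definition~2.4]{MZ18}. If instead $\mathcal{E}$ is almost nef, Proposition \ref{pro-basic-implication} (3) gives that $\mathcal{E}$ is generically nef, so $c_1(D)\cdot H_1\cdots H_{n-1}\ge 0$ for all ample Cartier divisors $H_i$, which is the standard intersection-theoretic criterion for pseudo-effective $\R$-Weil divisors.

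For part (2), fix $a\in\N$ and an ample Cartier $A$. Using pseudo-effectivity of $D$ to write $D+\tfrac{1}{2a}A\sim_{\Q}E$ for some effective $\Q$-Weil divisor $E$, one obtains $aD+A\sim_{\Q}aE+\tfrac{1}{2}A$. Taking $b$ sufficiently divisible so that $abE$ is an integral Weil divisor and $\tfrac{b}{2}A$ is very ample, one multiplies the defining section of $abE$ by a section of $\mathcal{O}_X(\tfrac{b}{2}A)$ non-vanishing at a prescribed general point of $X$ (which may be chosen outside $\Supp E$); this produces a section of $\mathcal{O}_X(abD+bA)$ non-vanishing there, verifying Definition \ref{defn-posi} (3). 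For the second assertion, Example \ref{exa-Gachet20} furnishes a divisorial sheaf $\mathcal{F}=\mathcal{O}_X(D)$ associated to a $2$-torsion (hence numerically trivial and a fortiori pseudo-effective) Weil divisor $D$; if $\mathcal{F}$ were almost nef, Lemma \ref{lem-elementary-lem-almostnef} (5) combined with Lemma \ref{lem-elementary-lem-almostnef} (4) would force $\mathcal{F}^{\otimes 2}/\Tor=\mathcal{I}_{\sing(X)}$ to be almost nef, contradicting the curve-through-a-singular-point argument modeled on Example \ref{exa-ideal-sheaf-blown-up}.

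For part (3), the $\Q$-Cartier hypothesis yields $m\in\N$ with $\mathcal{O}_X(mD)=\mathcal{E}^{[m]}$ invertible, and Lemma \ref{lem-elementary-lem-almostnef} (3) upgrades the almost nefness of $\mathcal{E}$ to almost nefness of $\mathcal{O}_X(mD)$. Restricting to a very general complete intersection curve $C=H_1\cap\cdots\cap H_{n-1}$ avoiding $\mathbb{S}(\mathcal{E}^{[m]})$ yields that $\mathcal{O}_X(mD)|_C$ is nef, so $mD\cdot H_1\cdots H_{n-1}\ge 0$ for all ample Cartier $H_i$, and hence $D$ is pseudo-effective as a $\Q$-Cartier divisor by the standard numerical criterion. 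The main obstacle throughout is reconciling the Weil-divisor nature of $D$ with the section-level positivity demanded by Definition \ref{defn-posi} (3), which is most acute in part (2); the ample twist $\tfrac{b}{2}A$ is precisely the device that converts $\Q$-linear equivalence against an effective divisor into generic global generation at the level of sheaves.
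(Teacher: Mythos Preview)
Your arguments for the pseudo-effective half of (1) and for the counterexample in (2) are correct and match the paper's approach. However, your treatment of the almost nef half of (1) and of (3) has a genuine gap: you rely on the claim that $c_1(D)\cdot H_1\cdots H_{n-1}\ge 0$ for all ample Cartier $H_i$ is ``the standard intersection-theoretic criterion'' for $D$ to be a pseudo-effective $\R$-Weil divisor. This is not correct. The definition in play (closure of the effective cone, cf.~\cite[Definition~2.4]{MZ18}) is, for $\Q$-Cartier $D$, dual via \cite{BDPP13} to the \emph{movable} cone of curves, which in dimension $\ge 3$ generally strictly contains the cone generated by complete-intersection classes; for genuinely Weil $D$ there is no such duality at all. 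Non-negativity on complete intersections is necessary but not sufficient. The paper handles the almost nef case of (1) by invoking \cite[Lemma~2.3]{HP19} to produce actual sections of $\mathcal{O}_X(i_cD+j_cH)$ with $j_c/i_c\to 0$, and then argues by limit exactly as in your pseudo-effective case. For (3), the paper pulls the Cartier line bundle $\mathcal{O}_X(mD)$ back to a resolution and applies \cite{BDPP13} directly to the resulting almost nef line bundle. Your argument for (3) is salvageable along these lines---an almost nef line bundle has non-negative degree on every movable curve, since any covering family escapes $\mathbb{S}$---but that is not what you wrote.

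A smaller point in (2): the sentence ``using pseudo-effectivity of $D$ to write $D+\tfrac{1}{2a}A\sim_{\Q}E$ effective'' hides a non-trivial step when $D$ is not $\Q$-Cartier. The paper cites \cite[Theorem~3.5]{FKL16} precisely for this (that $mD+A$ is a big Weil divisor). Once that is granted, your section-building with the $\tfrac{b}{2}A$ twist works and is slightly more explicit than the paper's argument.
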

\begin{proof}
(1). 
Suppose first that \(\mathcal{E}\) is pseudo-effective. 
Note that by \cite[Proposition 2.4]{Mat22}, the divisorial sheaf \(\mathcal{E}\) is pseudo-effective 
if and only if there exists an ample Cartier divisor \(A\) on \(X\) such that for any \(m\in\mathbb{Z}_+\), 
the sheaf \(\mathcal{O}_X(mD+A)=\text{Sym}^{[m]}\mathcal{E}\otimes A\) is generically globally generated.
Then, for a non-zero global section \(s_m\in H^0(X,\mathcal{O}_X(mD+A))\), 
we can associate the effective divisor linearly equivalent to \(mD+A\).
Hence, the Weil divisor \(D\) lies in the closure of the set of classes of effective codimension one cycles with real coefficients, 
and thus $D$ is a pseudo-effective \(\mathbb{R}\)-Weil divisor.

Suppose second that \(\mathcal{E}\) is almost nef.
Let us consider the construction in \cite{Nak04} (cf.~\cite[Definition 2.2]{HP19} and \cite[Setup 2.3]{Mat22}).
Let \(H\) be any ample Cartier divisor on \(X\).
Since the reflexive sheaf \(\mathcal{E}\) is almost nef, by \cite[Lemma 2.3]{HP19}, for any \(c>0\), there exists \(i_c,j_c\in \N\) such that \(i_c>cj_c\) and
\[
H^0(X,\text{Sym}^{[i_c]}\mathcal{E}\otimes H^{j_c})=H^0(X,\mathcal{O}_X(i_cD+j_cH))\neq 0.
\]
This indicates that\(D+(j_c/i_c)H\) is \(\mathbb{Q}\)-linearly equivalent to an effective Weil divisor, 
by noting that \(D+(j_c/i_c)H\) is not necessarily \(\mathbb{Q}\)-Cartier.
As the ratio \(j_c/i_c\) goes to zero whenever \(c\) tends to infinity, we see that \(D\) is a pseudo-effective  \(\mathbb{R}\)-Weil divisor.

\smallskip
(2). Suppose that $D$ is a pseudo-effective codimension one cycle. 
Then,  for an ample Cartier divisor \(A\),  the divisor \(mD+A\) is a big Weil divisor for any  \(m \in \mathbb{Z}_{+}\) 
(see \cite[Theorem 3.5]{FKL16}).
In particular,  there exists an integer   \(k \in \mathbb{Z}_{+}\)  such that \(h^0(X,\mathcal{O}_X(k(mD+A))) >0\), 
which implies that \(\text{Sym}^{[km]}\mathcal{E}\otimes\mathcal{O}_X(kA)\) is generically globally generated.
By \cite[Proposition 2.4 (6)]{Mat22}, the sheaf \(\mathcal{E}\) is pseudo-effective. 

Note that the sheaf \(\mathcal{F}\)  in Example \ref{exa-Gachet20} is not almost nef, but 
the Weil divisor $D$ with \(\mathcal{F}=\mathcal{O}_{X}(D)\)  is numerically trivial (in particular, nef).

\smallskip
(3).
This is a consequence of   \cite{BDPP13}. 
Indeed, by assumption, the divisor $mD$ can be regarded as a line bundle  for some $m \in \N$. 
For a resolution $\pi \colon  \widetilde{X} \to X$ of singularities, 
the pullback $\pi^{*}(\mathcal{O}_{X}(mD))$ is an almost nef line bundle, 
and hence $\pi^{*}(\mathcal{O}_{X}(mD))$ is pseudo-effective by \cite{BDPP13}. 
This indicates that the $\mathbb{Q}$-Cartier divisor $D$ is  pseudo-effective. 
\end{proof}

\subsection{Preliminary results}\label{subsec-pre}

In this subsection, we review several propositions which will be frequently used in this paper. 
Although the contents herein may be known to experts, we provide proofs and references for the sake of clarity. 

The following lemma is a slight generalization of \cite[Proposition 1.16]{DPS94} to pseudo-effective or almost nef sheaves. 

\begin{lem}\label{lem-nonvanish}
Let $X$ be a normal projective variety and let 
$\mathcal{E}$ be a pseudo-effective or almost nef torsion-free  sheaf on $X$.

\begin{itemize}
\item[$(1)$]
Any non-zero section $\tau \in H^{0} (X, \mathcal{E}^{\vee})$ is nonvanishing on $X_{\mathcal{E}} \cap X_{\reg}$.

\item[$(2)$] Let $\mathcal{S}$ be a reflexive  sheaf such that $\det \mathcal{S} [\otimes] \mathcal{E}$ 
is a pseudo-effective or almost nef sheaf. 
Consider an injective sheaf morphism $0 \to \mathcal{S} \to \mathcal{E}^\vee$. 
Then, the sheaf $\mathcal{S}$ is locally free on $X_\mathcal{E}\cap X_{\reg}$ and 
the above morphism is an injective bundle morphism on $X_\mathcal{E}\cap X_{\reg}$. 
\end{itemize}
\end{lem}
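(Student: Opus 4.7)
The plan is to establish (1) first and then reduce (2) to (1) by taking the $r$-th reflexive exterior power, where $r=\rk\mathcal{S}$.

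For (1), I note that a non-zero section $\tau\in H^0(X,\mathcal{E}^\vee)$ corresponds to a non-zero morphism $\tau^\vee\colon \mathcal{E}\to \mathcal{O}_X$, and nonvanishing of $\tau$ at a point $p$ is equivalent to surjectivity of $\tau^\vee$ at $p$. In the almost nef case, Lemma \ref{lem-elementary-lem-almostnef} (6) immediately forces $\tau^\vee$ to be surjective as a sheaf morphism, so $\tau$ has no zeros anywhere, and in particular not on $X_\mathcal{E}\cap X_{\reg}$. In the pseudo-effective case, I will argue by contradiction: assuming $\tau$ vanishes at some $p\in X_\mathcal{E}\cap X_{\reg}$, I will cut down to a very general smooth complete intersection curve $C$ through $p$ contained in $X_\mathcal{E}\cap X_{\reg}$ (possible since this open set has codimension at least two complement). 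By Proposition \ref{pro-basic-implication} (2), $\mathcal{E}$ is generically nef, and a Mehta--Ramanathan-type restriction theorem makes $\mathcal{E}|_C$ a nef vector bundle on $C$; the restriction $\tau|_C$ is then a non-zero section of $(\mathcal{E}|_C)^\vee$ vanishing at $p$, contradicting the classical case \cite[Proposition 1.16]{DPS94}.

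For (2), I will take the $r$-th reflexive exterior power of the inclusion $\mathcal{S}\hookrightarrow \mathcal{E}^\vee$ to obtain an injection $\det\mathcal{S}\hookrightarrow \wedge^{[r]}\mathcal{E}^\vee$. Since $\det\mathcal{S}$ is a line bundle on $X_{\reg}$ and $\wedge^{[r]}\mathcal{E}$ is locally free on $X_\mathcal{E}$, the conditions that $\mathcal{S}$ is locally free at $p\in X_\mathcal{E}\cap X_{\reg}$ and that $\mathcal{S}\to\mathcal{E}^\vee$ is an injective bundle morphism at $p$ are together equivalent to the induced global section of $(\det\mathcal{S}[\otimes]\wedge^{[r]}\mathcal{E})^\vee$ being nonvanishing at $p$. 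Applying (1) to $\mathcal{F}:=\det\mathcal{S}[\otimes]\wedge^{[r]}\mathcal{E}$ will then finish the proof, provided I can verify that $\mathcal{F}$ is pseudo-effective (respectively almost nef).

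This last positivity will follow from the hypothesis by observing that $\mathcal{F}$ is a quotient of $(\det\mathcal{S}[\otimes]\mathcal{E})[\otimes]\mathcal{E}^{[\otimes(r-1)]}$ via the antisymmetrization $\mathcal{E}^{[\otimes r]}\twoheadrightarrow \wedge^{[r]}\mathcal{E}$. Both factors $\det\mathcal{S}[\otimes]\mathcal{E}$ and $\mathcal{E}^{[\otimes(r-1)]}$ are pseudo-effective or almost nef by hypothesis together with the preservation of the given positivity under reflexive tensor powers and tensor products (Lemma \ref{lem-elementary-lem-almostnef} (5) in the almost nef case; \cite[Lemma 1.4]{Vie83} in the pseudo-effective case), so the tensor product and its quotient $\mathcal{F}$ both inherit the required positivity. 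The hardest part will be the pseudo-effective case of (1): justifying that a Mehta--Ramanathan-type restriction to a complete intersection curve \emph{forced to pass through the prescribed point $p$} still yields a nef vector bundle, rather than only giving this for a generic complete intersection curve without a prescribed base point. For the almost nef case, this obstacle does not arise, since Lemma \ref{lem-elementary-lem-almostnef} (6) bypasses curves entirely.
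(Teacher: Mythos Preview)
Your treatment of the almost nef case in (1) via Lemma \ref{lem-elementary-lem-almostnef} (6) is correct and in fact cleaner than the paper's argument, which instead slices by general hypersurfaces through $p$ to produce a curve $C\subseteq X_{\mathcal{E}}\cap X_{\reg}$ with $C\not\subseteq\mathbb{S}(\mathcal{E})$ and then invokes \cite[Proposition 1.16]{DPS94} on $C$. Your reduction of (2) to (1) via the $r$-th reflexive exterior power is exactly the content of \cite[Lemma 1.20]{DPS94} (cf.\ \cite[Lemma 3.1]{HIM22}), which is what the paper cites; your verification that $\det\mathcal{S}[\otimes]\wedge^{[r]}\mathcal{E}$ inherits the relevant positivity is routine and fine.

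The pseudo-effective case of (1), however, has a genuine gap --- the very one you flag as ``the hardest part'' but do not close. Proposition \ref{pro-basic-implication} (2) only gives that $\mathcal{E}$ is generically nef, and Mehta--Ramanathan-type restriction theorems guarantee preservation of the Harder--Narasimhan filtration (hence $\mu^{\min}\ge 0$, hence nefness on the curve) only for a \emph{general} complete intersection curve in sufficiently high multiples of the polarization. Forcing the curve to pass through the prescribed point $p$ cuts the parameter space down to a proper linear subspace, and there is no reason this subspace must meet the open locus where restriction behaves well; you would need an argument specific to this situation, and none is supplied. Note also that for torsion-free but non-locally-free sheaves, pseudo-effectivity does \emph{not} imply almost nefness (Example \ref{exa-Gachet20}), so you cannot simply fall back on the almost nef argument.

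The paper avoids this obstacle entirely by an analytic argument: using \cite[Proposition 2.4]{Mat22} to produce singular Hermitian metrics $h_m$ on $\Sym^{[m]}\mathcal{E}$, one forms $f_m:=\tfrac{1}{m}\log|\tau^m|_{h_m^\vee}$, which satisfies $\sqrt{-1}\partial\bar\partial f_m\ge -\tfrac{1}{m}\omega$. Pulling back to a resolution and taking a weak limit forces the limit to vanish, while a zero of $\tau$ at $p\in X_{\mathcal{E}}\cap X_{\reg}$ would give all the $f_m$ (and hence the limit) a strictly positive Lelong number at $p$, a contradiction. This Lelong-number argument is what you need to replace your Mehta--Ramanathan step.
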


\begin{proof}
The condition of $\tau $ being nonvanishing at $p \in X_{\mathcal{E}}$ 
simply means that the value $\tau(p) \in \mathcal{E} \otimes \mathbb{C}_{p}  \cong \mathbb{C}^{\rk \mathcal{E}}$ is 
a non-zero vector.
Conclusion (2) is a direct consequence of conclusion (1) and \cite[Lemma 1.20]{DPS94} (cf.\,\cite[Lemma 3.1]{HIM22}).
Hence, we focus on the proof of conclusion (1).

We first consider the case where $\mathcal{E}$ is pseudo-effective. 
In this case, by \cite[Proposition 2.4]{Mat22}, there exist singular Hermitian metrics $h_m$ on $\Sym^{[m]} \mathcal{E}$ 
such that the function 
$
f_m\coloneqq (1/m) \log | \tau^{ m} |_{h^\vee_m} 
$
satisfies that 
$
\deldel f_m \geq -(1/m) \omega. 
$
Here $\omega$ is a fixed K\"ahler form on $X$ and $\tau^{ m}$ is 
the section of $(\mathcal{E}^\vee)^{[m]}=(\mathcal{E}^{[m]})^{\vee}$ induced by $\tau$. 
Let $\pi \colon  \widetilde{X} \to X$ be a resolution of singularities of $X$. 
Since the pullback $\pi^{*} f_{m}$ satisfies that $\deldel f_m \geq -(1/m) \pi^{*}\omega$, 
the weak limit (after taking a subsequence) should be zero. 
If  $\tau$ has the zero point  $p \in X_{\mathcal{E}}\cap X_{\reg}$, 
the Lelong number of $f_m$ (and thus the weak limit) at $p$ is strictly greater than zero. 
This contradicts the fact that the weak limit is zero. 

We finally consider the case where $\mathcal{E}$ is almost nef. 
Taking general hypersurfaces passing through $p$, we obtain  a curve $C$ 
such that $p \in C \not\subseteq \mathbb{S}(\mathcal{E})$ and $C \subseteq X_{\reg} \cap X_{\mathcal{E}}$. 
The restriction $\mathcal{E}|_{C} $ is a nef vector bundle  by $C \not\subseteq \mathbb{S}(\mathcal{E})$ and $C \subseteq X_{\reg} \cap X_{\mathcal{E}}$, 
and  the section  $\tau|_{C} \in H^{0}(C,(\mathcal{E}|_{C})^{\vee})$ is non-zero. Hence, the section $\tau|_{C}$  should be nonvanishing on $C$. 
\end{proof}

The following lemma is an analog of \cite[Propositions 3.1, 3.2]{Mat22} originally formulated for pseudo-effective sheaves. 

\begin{lem}
\label{lem-p-curved}
Let $f \colon  X \to Y$ be a surjective morphism between normal projective varieties, 
and let $\mathcal{E}$ be a torsion-free sheaf on $Y$. 
If the reflexive pullback $f^{[*]}\mathcal{E}$ is positively curved,  
then $\mathcal{E}$ is positively curved. 
In particular, if the tangent sheaf $\mathcal{T}_X$ is positively curved, then so is $\mathcal{T}_{Y}$. 
\end{lem}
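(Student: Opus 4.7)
The plan is to prove the main statement by a fibered descent of the singular Hermitian metric and then deduce the ``in particular'' part as an immediate consequence. Let $H$ denote a singular Hermitian metric on $f^{[*]}\mathcal{E}$ witnessing its positive curvature, and set $Y^\circ := Y_{\reg}\cap Y_{\mathcal{E}}$ and $X^\circ := X_{\reg}\cap X_{f^*\mathcal{E}}\cap f^{-1}(Y^\circ)$. Since $Y\setminus Y^\circ$ has codimension at least two and $\mathcal{E}$ is torsion-free, it suffices to construct a positively curved metric $h$ on $\mathcal{E}|_{Y^\circ}$; this then extends uniquely across codimension two to a singular Hermitian metric on $\mathcal{E}$ by standard extension results.

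The central step is the construction of $h$ from $H$. For every open $U\subseteq Y^\circ$ and every local holomorphic section $u$ of $\mathcal{E}^\vee$ on $U$, the pullback $f^{*}u$ is a section of $(f^{[*]}\mathcal{E})^\vee$ on $f^{-1}(U)$, and the positive curvature of $H$ means that $\log|f^{*}u|_{H^\vee}$ is psh on $f^{-1}(U)\cap X^\circ$. I would define $h$ via a fibered envelope construction (for instance, an infimum along fibres followed by an upper semicontinuous regularisation, so that $f^{*}h\le H$ with equality on a sufficiently large locus), mirroring the construction used for the analog for pseudo-effective sheaves in \cite[Propositions 3.1, 3.2]{Mat22}. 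The psh property of $\log|u|_{h^\vee}$ on $U$ is then obtained from the descent principle: a usc function $\varphi$ on a normal variety $Y$ is psh as soon as $f^{*}\varphi$ is psh on $X$, for any proper surjective holomorphic map $f\colon X\to Y$; this holds because any holomorphic disk in $Y$ lifts, after a finite base change, to a holomorphic disk in $X$, and plurisubharmonicity is preserved under holomorphic pullback and under finite pushforward.

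The main obstacle is to make the envelope construction precise and to verify the pullback identity $\log|u|_{h^\vee}\circ f=\log|f^{*}u|_{H^\vee}$ on a dense open subset of $X^\circ$, particularly at points where $f$ fails to be flat or where $H$ degenerates. This is precisely where the proof draws on the techniques already developed for pseudo-effective sheaves in \cite{Mat22}, reinterpreted for the dual norms relevant to positive curvature.

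For the ``in particular'' statement, generic smoothness of $f$ implies that the differential $df\colon \mathcal{T}_X\to f^{*}\mathcal{T}_Y$ is generically surjective, and passing to reflexive hulls gives a generically surjective morphism $\mathcal{T}_X\to f^{[*]}\mathcal{T}_Y$. Quotient-type stability of positive curvature (the dual of a generically surjective morphism embeds $(f^{[*]}\mathcal{T}_Y)^\vee$ into $\mathcal{T}_X^\vee$, so the restricted metric on this subsheaf is positively curved, and then extends across codimension two) yields that $f^{[*]}\mathcal{T}_Y$ is positively curved. Applying the main statement of the lemma then gives that $\mathcal{T}_Y$ is positively curved.
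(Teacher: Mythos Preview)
Your proposal has a genuine gap in the construction of the descended metric. The fibered envelope via infimum along fibres that you suggest does not, in general, yield a \emph{Hermitian} metric: the pointwise infimum of a family of Hermitian norms fails the parallelogram law, so at best you would obtain a Finsler-type dual norm on $\mathcal{E}^\vee$, whereas Definition~\ref{defn-posi}~(5) requires a genuine Hermitian form. The analogue in \cite{Mat22} that you invoke concerns pseudo-effectivity, which is characterised via global generation of symmetric powers rather than a single metric, so the translation is not direct; you yourself flag this step as the ``main obstacle'' without resolving it.

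The paper's proof avoids this issue entirely by first taking the Stein factorisation $X\to Z\to Y$ and treating the two factors separately. For the fibration $X\to Z$ (connected fibres), the key observation you are missing is that for any local section $\tau$ of $\mathcal{E}^\vee$ the psh function $\log|f^*\tau|_{H^\vee}$ is bounded above on each compact fibre and hence \emph{constant} along fibres by the maximum principle; thus $|f^*\tau|_{H^\vee}=f^*g_\tau$ for some function $g_\tau$ on $Z$, and the parallelogram law descends tautologically, no envelope needed. For the finite morphism $Z\to Y$, one sums over the fibre on the \'etale locus, setting $g_\tau(p):=\sum_{q\in f^{-1}(p)}|f^*\tau|_{H^\vee}(q)$; a sum of Hermitian forms is again Hermitian and a sum of log-psh functions is log-psh, so this gives a positively curved metric on $Y\setminus V$ that extends across the branch locus $V$ by boundedness below. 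Your argument for the ``in particular'' statement, via the generically surjective differential $\mathcal{T}_X\to f^*\mathcal{T}_Y$ and extension across codimension two, agrees with the paper's.
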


\begin{proof}
It is sufficient to construct a positively curved metric on $\mathcal{E}|_{Y_{\mathcal{E}} \cap Y_{\reg}}$. 
Hence,  by replacing $Y$ with $Y_{\mathcal{E}}\cap Y_{\reg}$, 
we may assume that $\mathcal{E}$ is a locally free and $Y$ is smooth. 
By taking the Stein factorization, 
we divide the proof into the case where  $f \colon  X \to Y$ is a fibration and 
the case where $f \colon  X \to Y$ is a surjective finite morphism. 

We first consider the case $f \colon  X \to Y$ is a fibration. 
Let $h$ be a  positively curved metric on $ f^{*}\mathcal{E}$.
Take a local section $\tau$ of  $\mathcal{E}^{\vee}$ and 
consider the norm $|f^{*}\tau|_{h^{\vee}}$ of the section $f^{*}\tau$  of $ f^{*}\mathcal{E}^\vee$, 
where $h^{\vee}$ is the dual metric on $\mathcal{E}^{\vee}$. 
Since the function $\log |f^{*}\tau|_{h^{\vee}}$  is psh,  
the norm $|f^{*}\tau|_{h^{\vee}}$ is constant along fibers. 
Since fibers are connected, 
the norm $|f^{*}\tau|_{h^{\vee}}$ can be written as $|f^{*}\tau|_{h^{\vee}}=f^{*} g_{\tau}$ 
for some psh function $g_{\tau}$. 
The functions $g_{\tau}$ determine the positively curved metric on $\mathcal{E}$. 
Indeed, since $h$ satisfies the parallelogram law, so is the function $g_{\tau}$. 
Hence  the functions $g_{\tau}$ determine the Hermitian metric on $\mathcal{E}$. 

We now consider the case $f \colon  X \to Y$ is a finite morphism. 
Take a subvariety $V \subseteq Y$ such that $f \colon  X \setminus f^{-1}(V) \to Y \setminus V$ is a finite \'etale cover. 
For  a local section $\tau$ of  $\mathcal{E}^{\vee}$, 
the function $g_{\tau} $ on $Y \setminus V$ defined by 
$$
g_{\tau} (p)\coloneqq \sum_{q \in f^{-1}(p)} |f^{*}\tau|_{h^{\vee}}(q)
$$
is psh. 
In the same way as above, these functions $g_{\tau} $ determine the positively curved metric on $\mathcal{E}|_{Y\setminus V}$. 
The function $g_{\tau}$ is bounded below around $V$. 
Hence, this metric can be extended to the metric on $\mathcal{E} $. 

For the latter conclusion, we consider the natural generically surjective sheaf morphism $\mathcal{T}_X \to f^*\mathcal{T}_Y$ over $Y_{\reg}$. 
Since $\mathcal{T}_X $ is positively curved, and so is $f^*\mathcal{T}_{Y_{\reg}}$. 
The positively curved metric on $f^*\mathcal{T}_{Y_{\reg}}$ descends to that on $\mathcal{T}_{Y_{\reg}}$, 
which  can be extended to the  positively curved metric on $\mathcal{T}_{Y}$ by $\codim Y_{\sing} \geq 2$. 
\end{proof}

At the end of this subsection, we prepare the two lemmas below. 
 
\begin{lem}\label{lem-bircontr-Q-abelian}
Let \(X\) be a finite $($not necessarily \'etale$)$ quotient of an abelian variety \(A\).
Then, any birational morphism \(X\to Y\) onto a normal projective variety \(Y\) is an isomorphism.
\end{lem}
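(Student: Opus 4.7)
The plan is to reduce the problem to a statement about a morphism from the abelian variety \(A\). Let \(\pi\colon A\to X\) denote the given quotient map, and set \(g\coloneqq f\circ \pi\colon A\to Y\). Since \(\pi\) is finite surjective and \(f\) is birational, \(g\) is surjective and generically finite of degree \(\deg\pi\). If I can show that \(g\) is itself a finite morphism, the conclusion follows quickly: for any \(y\in Y\), the fiber \(f^{-1}(y)=\pi(g^{-1}(y))\) is finite as the image under \(\pi\) of the finite set \(g^{-1}(y)\), so the proper morphism \(f\) is quasi-finite, hence finite; being birational onto the normal variety \(Y\), Zariski's Main Theorem then identifies \(f\) as an isomorphism.

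The main step is therefore to establish the finiteness of \(g\), and the idea is to exploit the positivity of line bundles on abelian varieties. Fix any ample line bundle \(H\) on \(Y\), and consider \(L\coloneqq g^{*}H\) on \(A\). The line bundle \(L\) is clearly nef; moreover, since \(g\) is generically finite and surjective, the projection formula yields
\[
L^{\dim A}=\deg(g)\cdot H^{\dim Y}>0,
\]
so \(L\) is also big. A classical consequence of the Riemann bilinear relations is that any nef and big line bundle on an abelian variety is automatically ample: the first Chern class corresponds to a Hermitian form on the universal cover, which is semi-positive by nefness and non-degenerate by bigness, hence positive definite. Thus \(L\) is ample on \(A\). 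However, were \(g\) not finite, some fiber would contain an irreducible curve \(C\), whence the projection formula would give \(L\cdot C = H\cdot g_{*}C = 0\), contradicting ampleness. Consequently every fiber of \(g\) is zero-dimensional, and \(g\) is finite.

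The only non-trivial input is the classical ``nef plus big implies ample'' principle on abelian varieties; everything else is formal, and I anticipate no substantive obstacle beyond invoking this standard fact.
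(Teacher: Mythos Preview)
Your proof is correct and follows essentially the same approach as the paper: both arguments pull back an ample divisor to the abelian variety \(A\), invoke the fact that a nef and big line bundle on an abelian variety is ample, and deduce that no curve is contracted. The only cosmetic difference is that the paper first takes the Stein factorization \(A\to Y'\to Y\) of \(g\) and argues that the birational part \(A\to Y'\) is an isomorphism, whereas you work directly with \(g=f\circ\pi\) and conclude finiteness at once; your route is slightly more streamlined but the content is identical.
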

\begin{proof}
Let us consider the following commutative diagram: 
\[
\xymatrix{
A\ar[r]\ar[d]&Y'\ar[d]\\
X\ar[r]&Y, 
}
\]
where  \(A\to X\) is a finite morphism from an abelian variety $A$, 
the morphism \(A\to Y'\) is the birational morphism, and  \(Y'\to Y\) is the finite morphism 
induced by the Stein factorization of the composite morphism \(A\to X\to Y\). 
Since \(A\) is an abelian variety,
the pullback of any ample divisor from \(Y'\) to \(A\) is still ample, 
which implies there is no curve contracted by the projection formula. 
Hence,  we obtain that $A$ is actually isomorphic to $Y'$. 
Since there is no curve contracted and \(X\to Y\) has connected fibers, 
the birational morphism \(X\to Y\) is also an isomorphism. 
\end{proof}

Let us recall functorial resolutions. 
A functorial resolution is a resolution of singularities of normal projective varieties such 
that it commutes with smooth morphisms and the exceptional locus is a simple normal crossing divisor 
(see \cite[Theorem 3.35, 3.45]{Kol07} and \cite[Section 4]{GKK10}).
If \(X\) is a normal projective surface, then the  minimal resolution of $X$ 
is a functorial resolution (cf.~\cite[Proposition 1.2]{Wah75}).

\begin{lem}[{\cite[Corollary 4.7]{GKK10}}]
\label{lem-functorial-resolution-tangent}
Let $X$ be a normal projective variety  and 
$\pi \colon  \widetilde{X} \to X$ be the functorial resolution with the exceptional divisor \(E\).
Then, there exist generically surjective sheaf morphisms
$$
\pi^{*} \mathcal{T}_{X} \to \mathcal{T}_{\widetilde{X}}(-\textup{log}\,E)
\text{ and }\pi^{*} \mathcal{T}_{X} \to \mathcal{T}_{\widetilde{X}}
 $$
that are isomorphisms over $X_{\reg}$.
\end{lem}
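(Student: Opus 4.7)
The plan is to construct the morphism $\pi^{*}\mathcal{T}_{X} \to \mathcal{T}_{\widetilde{X}}(-\textup{log}\,E)$; the second morphism then follows immediately by composing with the natural inclusion $\mathcal{T}_{\widetilde{X}}(-\textup{log}\,E) \hookrightarrow \mathcal{T}_{\widetilde{X}}$ of log vector fields into holomorphic vector fields. Since $\pi$ is an isomorphism over $X_{\reg}$ and $E$ is disjoint from $\pi^{-1}(X_{\reg})$, both targets restrict canonically to $\mathcal{T}_{X_{\reg}}$, so the real content is to extend the tautological identification across the exceptional divisor $E$.

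Working locally in the analytic topology on a small open subset $U\subseteq X$, I would regard a section $\theta\in\mathcal{T}_{X}(U)=(j_{0,*}\mathcal{T}_{X_{\reg}})(U)$ as a derivation on $U\cap X_{\reg}$ and form its local flow $\{\Phi_{t}^{\theta}\}_{|t|<\varepsilon}$, a one-parameter family of biholomorphisms of $U\cap X_{\reg}$. Since $X\setminus X_{\reg}$ has codimension at least two and $X$ is normal, Hartogs' extension theorem upgrades each $\Phi_{t}^{\theta}$ to a biholomorphism of $U$ preserving $X_{\sing}$ set-theoretically. The crucial ingredient now is the functoriality of the resolution: because $\pi$ commutes with biholomorphisms of normal analytic spaces, each $\Phi_{t}^{\theta}$ admits a canonical lift $\widetilde{\Phi}_{t}^{\theta}$ to a biholomorphism of $\pi^{-1}(U)$ that preserves the exceptional divisor $E$. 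Differentiating at $t=0$ yields a log vector field $\widetilde{\theta}\in\mathcal{T}_{\widetilde{X}}(-\textup{log}\,E)(\pi^{-1}(U))$. The assignment $\theta\mapsto\widetilde{\theta}$ is readily verified to be $\mathcal{O}_{X}$-linear and compatible with localization, producing a morphism $\pi^{-1}\mathcal{T}_{X}\to\mathcal{T}_{\widetilde{X}}(-\textup{log}\,E)$ of $\pi^{-1}\mathcal{O}_{X}$-modules, and by $\mathcal{O}_{\widetilde{X}}$-linear extension (tensor--hom adjunction) this factors through the desired morphism $\pi^{*}\mathcal{T}_{X}\to\mathcal{T}_{\widetilde{X}}(-\textup{log}\,E)$.

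Over $\pi^{-1}(X_{\reg})=X_{\reg}$, the construction recovers the canonical isomorphism $\pi^{*}\mathcal{T}_{X_{\reg}}\cong\mathcal{T}_{X_{\reg}}$ (using $E\cap X_{\reg}=\emptyset$), which proves the isomorphism claim on the regular locus and, since $X_{\reg}$ is dense in $\widetilde{X}$, generic surjectivity. The main obstacle is the rigorous lifting step $\Phi_{t}^{\theta}\rightsquigarrow\widetilde{\Phi}_{t}^{\theta}$ preserving $E$: this is exactly the deep content of functoriality of canonical desingularizations (after Bierstone--Milman and Villamayor), and has been carried out in detail in \cite[Section 4, Corollary 4.7]{GKK10}. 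Once this is invoked, the entire argument reduces to the local picture described above.
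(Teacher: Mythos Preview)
Your argument is correct, and it is essentially an unpacking of the content behind the citation the paper invokes. The paper's own proof is much shorter and more sheaf-theoretic: it quotes from \cite[Corollary~4.7]{GKK10} the statement that the pushforward \(\pi_*\mathcal{T}_{\widetilde{X}}(-\log E)\) is reflexive, hence coincides with \(\mathcal{T}_X\); the desired morphism is then simply the counit of the \((\pi^*,\pi_*)\)-adjunction,
\[
\pi^*\mathcal{T}_X \;=\; \pi^*\pi_*\mathcal{T}_{\widetilde{X}}(-\log E)\;\longrightarrow\;\mathcal{T}_{\widetilde{X}}(-\log E),
\]
which is visibly an isomorphism over \(\pi^{-1}(X_{\reg})\). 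Your flow-and-lift construction is precisely how one proves the reflexivity statement in \cite{GKK10}: lifting a local vector field \(\theta\) on \(X\) to a log vector field \(\widetilde{\theta}\) on \(\widetilde{X}\) amounts to showing that the natural map \(\mathcal{T}_X\to\pi_*\mathcal{T}_{\widetilde{X}}(-\log E)\) is surjective (and hence an isomorphism), and the morphism you obtain by adjunction from \(\theta\mapsto\widetilde{\theta}\) is exactly the counit above. So the two approaches are the same in substance; yours makes the geometric mechanism explicit, while the paper keeps the technical work encapsulated in the citation and gets the morphism in one line.
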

\begin{proof}
Since there is a generically surjective sheaf morphism \(\mathcal{T}_{\widetilde{X}}(-\text{log}\,E)\to \mathcal{T}_{\widetilde{X}}\), 
we are left to show the existence of the first morphism.
As  proved in \cite[Corollary 4.7]{GKK10}, the direct image \(\pi_*\mathcal{T}_{\widetilde{X}}(-\text{log}\,E)\) is reflexive, 
and hence \(\pi_*\mathcal{T}_{\widetilde{X}}(-\text{log}\,E)=\mathcal{T}_X\) holds.
In particular, via the trace map, we obtain  
$$ \pi^*\mathcal{T}_X=\pi^*\pi_*\mathcal{T}_{\widetilde{X}}(-\text{log}\,E)\to \mathcal{T}_{\widetilde{X}}(-\text{log}\,E). $$
By construction, this morphism is an isomorphism on $\pi^{-1}(X_{\text{reg}})$.
\end{proof}

\section{Flatness of reflexive sheaves on maximally quasi-\'etale varieties}\label{Sec-3}

In this section, we consider the structure of pseudo-effective or almost nef reflexive sheaves on projective klt varieties. 
For this purpose, we first review maximally quasi-\'etale covers and $\Q$-Chern classes.
A normal projective variety $X$ is said to be \textit{maximally quasi-\'etale} if the homomorphism between the \'etale fundamental groups 
$$
\widehat{\pi}_{1}(X_{\reg}) \rightarrow \widehat{\pi}_{1}(X)
$$
induced by 
$
i_{*}\colon \pi_{1}(X_{\reg}) \rightarrow \pi_{1}(X)
$
is an isomorphism, where $i \colon X_{\reg} \to X$ is the natural inclusion. 
By \cite[Theorem 1.14]{GKP16},  any projective klt variety admits a finite quasi-\'etale cover $\widehat{X} \rightarrow X$ 
such that $\widehat{X} $ is maximally quasi-\'etale. 
Note that any finite quasi-\'etale cover of $\widehat{X} $ is always \'etale by definition.

Let $X$ be a projective klt variety of dimension $n$, and 
let $\mathcal{E}$ and $\mathcal{F}$ be reflexive sheaves on $X$.
By \cite[Chapter 10]{K++}  and \cite[Theorem 3.13]{GKPT19b},
there exist symmetric $\Q$-multilinear forms on the N\'eron-Severi space with \(\mathbb{Q}\)-coefficients:
\begin{align*}
\widehat{c}_1(\mathcal{E})& \colon  \text{N}^{1}(X)_{\Q}^{n-1} \rightarrow \Q, &\quad 
&(\alpha_1, \ldots, \alpha_{n-1}) \mapsto \widehat{c}_1(\mathcal{E})\alpha_1\cdots\alpha_{n-1}, \\
\widehat{c}_1(\mathcal{E})\widehat{c}_1(\mathcal{F})&\colon  \text{N}^{1}(X)_{\Q}^{n-2} \rightarrow \Q, &\quad 
&(\alpha_1, \ldots, \alpha_{n-2}) \mapsto \widehat{c}_1(\mathcal{E})\widehat{c}_1(\mathcal{F}) \alpha_1\cdots\alpha_{n-2}, \\
\widehat{c}_2(\mathcal{E})& \colon  \text{N}^{1}(X)_{\Q}^{n-2} \rightarrow \Q, &\quad 
& (\alpha_1, \ldots, \alpha_{n-2}) \mapsto \widehat{c}_2(\mathcal{E})\alpha_1\cdots\alpha_{n-2},
\end{align*}
satisfying the following properties:

\begin{itemize}
\item [$($P1$)$.] In the case $n=2$, 
the surface $X$ admits a finite Galois (not necessarily quasi-\'etale) cover $\nu \colon  \widehat{X} \rightarrow X$  
such that $\nu^{[*]}\mathcal{E}$ is locally free and 
$$
(\deg \nu) \cdot (\widehat{c}_{1}(\mathcal{E}) \alpha) = c_{1}(\nu^{[*]}\mathcal{E}) \nu^{*}\alpha
$$
holds for any $\alpha \in \text{N}^{1}(X)_{\Q}$.

\item [$($P2$)$.]  In the case $n>2$, for an ample Cartier divisor $\mathcal{L}$ 
with a base point free linear system $\mathcal{B} \subseteq |\mathcal{L}|$ such that any hypersurface in $\mathcal{B}$ is connected, 
there exists a (non-empty) Zariski open set $\mathcal{B}_{0} \subseteq \mathcal{B} $ such that 
any member $V \in \mathcal{B}_{0}$ has klt singularities and satisfies that 
$\mathcal{E}|_{V}$ is reflexive and 
$$
\widehat{c}_1(\mathcal{E})\det(\mathcal{L})\alpha_2\cdots\alpha_{n-1}
=
\widehat{c}_1(\mathcal{E}|_{V})\alpha_2\cdots\alpha_{n-1}
$$
for any $\alpha_i \in \text{N}^{1}(X)_{\Q}$. 
\end{itemize}

The same properties hold for $\widehat{c}_1(\mathcal{E})\widehat{c}_1(\mathcal{F}) $ and $\widehat{c}_2(\mathcal{E})$. 
Note that we have 
$$\widehat{c}_1(\mathcal{E})\alpha_1\cdots\alpha_{n-1} = \det(\mathcal{E})\alpha_1\cdots\alpha_{n-1}$$
by (P1), (P2) and the projection formula (cf.\,\cite[Proposition 2.3 (c)]{Ful93}). 

\begin{prop}
\label{prop-almostnef-c2-locallyfree} 
Let  $X$ be a  projective klt variety of dimension $n$ and $H$ an ample Cartier divisor. 
Let $\mathcal{E}$ be a pseudo-effective or almost nef reflexive sheaf on $X$. 
If $\widehat{c}_1(\mathcal{E})H^{n-1}=0$, then 
$$
\widehat{c}_2(\mathcal{E}) H^{n-2}=\widehat{c}_1(\mathcal{E})^{2} H^{n-2}=0.
$$ 
In addition, if $X$ is maximally quasi-\'etale, then  $\mathcal{E}$ is a flat locally free sheaf.
\end{prop}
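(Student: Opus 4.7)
The plan is to reduce the Chern-class vanishings to the smooth surface case via hyperplane sections, Galois covers and functorial resolutions, then invoke known numerical flatness theorems on smooth surfaces, and finally apply the Greb-Kebekus-Peternell-Taji correspondence between semistable reflexive sheaves of vanishing discriminant and representations of the \'etale fundamental group. Using property (P2) iteratively, I would cut $X$ by $n-2$ very general members of $|mH|$ for $m \gg 0$ to obtain a projective klt surface $S$. Since very general hypersurfaces avoid $X_{\sing}$, the non-locally-free locus of $\mathcal{E}$, and, in the almost nef case, the countable union $\mathbb{S}(\mathcal{E})$, the reflexive sheaf $\mathcal{E}|_{S}$ on $S$ remains pseudo-effective or almost nef. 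Property (P2) translates $\widehat{c}_1(\mathcal{E}) \cdot H^{n-1}=0$ into $\widehat{c}_1(\mathcal{E}|_{S}) \cdot H|_{S} = 0$, and the desired vanishings on $X$ reduce to their surface analogs.

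On the klt surface $S$, property (P1) yields a finite Galois cover $\nu \colon \widehat{S} \to S$ such that $\widehat{\mathcal{E}} \coloneqq \nu^{[*]}(\mathcal{E}|_{S})$ is locally free; the reflexive pullback preserves almost nefness by Lemma \ref{lem-elementary-lem-almostnef}, and in the pseudo-effective case one verifies preservation directly, exploiting that the discrepancy between $\nu^{*}$ and $\nu^{[*]}$ is supported at finitely many points on a surface. Composing with the minimal resolution $\pi \colon \widetilde{S} \to \widehat{S}$, the vector bundle $V \coloneqq \pi^{*}\widehat{\mathcal{E}}$ on the smooth projective surface $\widetilde{S}$ is pseudo-effective or almost nef with $c_1(V) \cdot \pi^{*}\nu^{*}H = 0$. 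The numerical flatness theorems of Cao-H\"oring for pseudo-effective vector bundles and of Iwai~\cite{Iwa22} for almost nef vector bundles then force $c_1(V)^{2} = c_2(V) \cdot \pi^{*}\nu^{*}H = 0$. Via (P1) and the projection formula, these vanishings descend to $\widehat{c}_1(\mathcal{E}|_{S})^{2} = \widehat{c}_2(\mathcal{E}|_{S}) = 0$, establishing the first assertion.

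For the flatness, assume $X$ is maximally quasi-\'etale. Since torsion-free quotients of pseudo-effective or almost nef sheaves remain pseudo-effective or almost nef, hence generically nef by Proposition \ref{pro-basic-implication}, the sheaf $\mathcal{E}$ is $H$-semistable. Combined with the now-established vanishings of $\widehat{c}_1(\mathcal{E}) \cdot H^{n-1}$, $\widehat{c}_1(\mathcal{E})^{2} \cdot H^{n-2}$, and $\widehat{c}_2(\mathcal{E}) \cdot H^{n-2}$, the $\Q$-discriminant of $\mathcal{E}$ vanishes against $H^{n-2}$. The Greb-Kebekus-Peternell-Taji correspondence on projective klt varieties then guarantees that $\mathcal{E}$ is locally free and arises from a representation of $\widehat{\pi}_{1}(X_{\reg})$; the maximally quasi-\'etale hypothesis identifies the latter with $\widehat{\pi}_{1}(X)$, yielding that $\mathcal{E}$ is a flat locally free sheaf.

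The main obstacle is preserving the positivity hypothesis across the entire chain of operations, namely general hyperplane section, finite Galois pullback, and minimal resolution. In the almost nef case this forces careful tracking of the countable ``bad'' locus, and in the pseudo-effective case Example \ref{exa-ideal-sheaf-blown-up} warns that reflexive pullback can destroy pseudo-effectivity in general; the remedy is to pass first to the Galois cover where $\mathcal{E}|_{S}$ becomes locally free, so that $\nu^{*}$ and $\nu^{[*]}$ coincide and the ordinary pullback arguments apply cleanly on the smooth resolution.
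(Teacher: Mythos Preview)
Your approach is essentially the same as the paper's: reduce to a klt surface via property (P2), pass to a finite Galois cover via (P1) to make the sheaf locally free, pull back to the minimal resolution, invoke the smooth-surface numerical flatness results of \cite{CH19}/\cite{HIM22}/\cite{Iwa22}, and deduce flatness from semistability plus the vanishing $\mathbb{Q}$-Chern classes via \cite{LT18} (which packages the GKPT correspondence you cite). Two small points of precision: on the resolved surface $\widetilde{S}$ the class $\pi^{*}\nu^{*}H$ is only nef and big, so the smooth-surface theorems do not apply directly with that class---one first observes on $\widehat{S}$ that $c_1(\widehat{\mathcal{E}})$ is pseudo-effective with $c_1(\widehat{\mathcal{E}})\cdot\nu^{*}H=0$ against the \emph{ample} class $\nu^{*}H$, hence $c_1(\widehat{\mathcal{E}})\equiv 0$ numerically, and then $c_1(V)=\pi^{*}c_1(\widehat{\mathcal{E}})\equiv 0$; and on a surface $c_2(V)$ is already a number, so writing $c_2(V)\cdot\pi^{*}\nu^{*}H$ is a slip.
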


\begin{proof}

The first assertion has been proved when $X$ is a smooth surface by \cite[Corollary 2.12]{CH19} 
(cf.\,\cite[Theorem 1.2]{HIM22}, \cite[Theorem 1.8]{HP19}).
We reduce the higher dimensional case to this case as follows: 
We may assume that $X$ is a (possibly singular) surface by property (P2). 
Then, by property (P1), there exists a finite Galois cover $\nu \colon  \widehat{X} \rightarrow X$ 
such that $\nu^{[*]}\mathcal{E}$ is locally free and 
\begin{equation}
\label{eq-first-chern-qchern}
c_{1}(\nu^{[*]}\mathcal{E}) \nu^{*}H  =(\deg \nu) \cdot (\widehat{c}_{1}(\mathcal{E})H)  = 0 \text{ holds}. 
\end{equation}
Note that $\nu^{[*]}\mathcal{E}$ is pseudo-effective or almost nef by Lemma \ref{lem-elementary-lem-almostnef} (4). 
Take the minimal resolution $\pi \colon  \widetilde{X} \to \widehat{X}$. 
Then, by the functoriality of Chern classes of locally free sheaves, 
we obtain 
\begin{equation}
\label{eq-first-chern-minimalresol}
c_i(\pi^{*}\nu^{[*]}\mathcal{E}) = \pi^{*}c_i(\nu^{[*]}\mathcal{E})
\end{equation} 
for $i=1,2$. 
Since $\nu^{[*]}\mathcal{E}$ is a pseudo-effective or almost nef locally free sheaf,  
we obtain $c_1(\pi^{*}\nu^{[*]}\mathcal{E})=0$ by (\ref{eq-first-chern-qchern}) (cf.~\cite[Theorem 1.18]{KM98}).
By applying the results for smooth surfaces, we obtain 
$$c_1(\pi^{*}\nu^{[*]}\mathcal{E})^2 = c_2(\pi^{*}\nu^{[*]}\mathcal{E})=0.$$
Thus, the first conclusion follows from  (\ref{eq-first-chern-minimalresol}) and  property (P1).

We finally check the last assertion. 
The sheaf $\mathcal{E}$ is \(H^{n-1}\)-semistable by $\mu_{H}^{\min}(\mathcal{E}) = \mu_{H}(\mathcal{E}) =0$.
Then, we use the argument of \cite[Section 3.2 p.310]{LT18} to conclude. 
\end{proof}

\begin{cor}
\label{cor-c1-almostnef-tangent} 
Let $X$ be a projective klt variety of dimension $n$ with pseudo-effective or almost nef tangent sheaf. 
If $K_X  H^{n-1}\ge 0$ for some ample Cartier divisor $H$, 
then there exists a finite quasi-\'etale Galois cover $A \rightarrow X$ from an abelian variety $A$.
\end{cor}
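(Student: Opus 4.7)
The plan is three-step: first, use generic nefness to promote the numerical hypothesis to $\widehat{c}_1(\mathcal{T}_X)\cdot H^{n-1}=0$; second, pass to a maximally quasi-\'etale Galois cover and invoke Proposition~\ref{prop-almostnef-c2-locallyfree} to get flatness of the tangent sheaf together with the vanishing of both $\mathbb{Q}$-Chern numbers $\widehat{c}_1^2\cdot \widehat{H}^{n-2}$ and $\widehat{c}_2\cdot \widehat{H}^{n-2}$; third, invoke the singular Beauville--Bogomolov decomposition for projective klt varieties to identify the cover as an abelian variety.

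For the first step, Proposition~\ref{pro-basic-implication} shows that $\mathcal{T}_X$ is generically nef, hence
$$
-K_X\cdot H^{n-1}\;=\;\widehat{c}_1(\mathcal{T}_X)\cdot H^{n-1}\;=\;n\cdot\mu_{H^{n-1}}(\mathcal{T}_X)\;\geq\; n\cdot \mu_{H^{n-1}}^{\min}(\mathcal{T}_X)\;\geq\; 0,
$$
which, together with the hypothesis $K_X\cdot H^{n-1}\geq 0$, forces $\widehat{c}_1(\mathcal{T}_X)\cdot H^{n-1}=0$.

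For the second step, I will fix a finite quasi-\'etale Galois cover $\nu\colon \widehat{X}\to X$ with $\widehat{X}$ maximally quasi-\'etale and klt (by \cite[Theorem 1.14]{GKP16}, followed by passing to a Galois closure). Quasi-\'etaleness gives $\mathcal{T}_{\widehat{X}}=\nu^{[*]}\mathcal{T}_X$, which remains pseudo-effective or almost nef (by Lemma~\ref{lem-elementary-lem-almostnef}~(4) in the almost nef case, and by pullback of singular metrics in the pseudo-effective case, cf.\ Lemma~\ref{lem-p-curved}). Set $\widehat{H}\coloneqq \nu^{*}H$; since $K_{\widehat{X}}=\nu^{*}K_X$ for quasi-\'etale covers, the projection formula yields $\widehat{c}_1(\mathcal{T}_{\widehat{X}})\cdot \widehat{H}^{n-1}=(\deg\nu)\cdot \widehat{c}_1(\mathcal{T}_X)\cdot H^{n-1}=0$. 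Applying Proposition~\ref{prop-almostnef-c2-locallyfree} to $\mathcal{T}_{\widehat{X}}$ on the maximally quasi-\'etale klt variety $\widehat{X}$, I conclude that $\mathcal{T}_{\widehat{X}}$ is a flat locally free sheaf with
$$
\widehat{c}_1(\mathcal{T}_{\widehat{X}})^{2}\cdot \widehat{H}^{n-2}\;=\;\widehat{c}_2(\mathcal{T}_{\widehat{X}})\cdot \widehat{H}^{n-2}\;=\;0.
$$

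For the third step, flatness of $\mathcal{T}_{\widehat{X}}$ implies that all its real Chern classes vanish, in particular $K_{\widehat{X}}\equiv 0$. Combining this with the vanishing of $\widehat{c}_2(\mathcal{T}_{\widehat{X}})\cdot \widehat{H}^{n-2}$ and invoking the singular Beauville--Bogomolov decomposition for projective klt varieties (Greb--Kebekus--Peternell, and its refinements by H\"oring--Peternell and Druel--Guenancia), one obtains a further finite quasi-\'etale cover of $\widehat{X}$ splitting as a product of an abelian variety, irreducible Calabi--Yau factors, and irreducible holomorphic-symplectic factors; the strict positivity of $\widehat{c}_2\cdot \widehat{H}^{n-2}$ on the non-abelian summands excludes them, leaving an abelian variety $A$. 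Purity of the branch locus (applied at the smooth target $A$) shows that the Galois closure of the composition $A\to \widehat{X}\to X$ remains an \'etale cover of $A$, hence is itself an abelian variety, and provides the desired quasi-\'etale Galois cover of $X$. I expect the main obstacle to be this last step: precisely, pinning down the fact that the $\mathbb{Q}$-Chern number $\widehat{c}_2\cdot \widehat{H}^{n-2}$ is strictly positive on each irreducible klt Calabi--Yau or holomorphic-symplectic factor against any ample class, which is what rules out those summands from a single numerical vanishing.
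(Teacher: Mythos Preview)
Your Steps 1 and 2 match the paper's argument essentially verbatim: pass to a maximally quasi-\'etale cover, use generic nefness to get $\widehat{c}_1(\mathcal{T}_{\widehat X})\cdot \widehat H^{n-1}=0$, and apply Proposition~\ref{prop-almostnef-c2-locallyfree} to conclude that $\mathcal{T}_{\widehat X}$ is flat and locally free. (One small quibble: your citation of Lemma~\ref{lem-p-curved} for preservation of pseudo-effectivity under the reflexive pullback is off, since that lemma concerns positively curved metrics; but the claim itself is fine for quasi-\'etale covers.)

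Your Step 3 diverges from the paper and is substantially heavier than necessary. You invoke the singular Beauville--Bogomolov decomposition together with the strict positivity of $\widehat c_2$ on the irreducible Calabi--Yau and holomorphic-symplectic summands, and you correctly flag this last point as the delicate ingredient. This route does work (the characterisation ``klt, $K\equiv 0$, $\widehat c_2\cdot H^{n-2}=0$ $\Rightarrow$ quasi-abelian'' is known), but the paper bypasses it entirely. The point you are not exploiting is that Proposition~\ref{prop-almostnef-c2-locallyfree} already gives you that $\mathcal{T}_{\widehat X}$ is \emph{locally free}; by the Zariski--Lipman conjecture in the klt case \cite[Theorem~6.1]{GKKP11}, this forces $\widehat X$ to be \emph{smooth}. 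Once $\widehat X$ is a smooth projective manifold with flat tangent bundle, Yau's theorem (or the classical smooth Beauville--Bogomolov theorem) immediately gives a finite \'etale cover by an abelian variety, with no need to analyse $\widehat c_2$ on singular factors. Your approach buys nothing extra here and imports much deeper input than the two-line argument the paper uses.
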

\begin{proof}
By \cite[Theorem 1.14]{GKP16}, replacing $X$ with a finite quasi-\'etale cover, we may assume that $X$ is maximally quasi-\'etale.
Since  \(\mathcal{T}_X\) is generically nef, we have 
$$\widehat{c}_1(\mathcal{T}_{X})H^{n-1}=-K_X H^{n-1}=0.$$
Thus, by Proposition \ref{prop-almostnef-c2-locallyfree},  the tangent sheaf $\mathcal{T}_{X}$ is a flat locally free sheaf, 
and we can conclude that $X$ is smooth by the known case of the Zariski-Lipman conjecture in \cite[Theorem 6.1]{GKKP11}.
Then, by Yau's theorem, we see that $X$ is an abelian variety after taking a further \'etale cover.
\end{proof}

To generalize the Fujita decomposition (Proposition \ref{thm-fujita-decomposition-klt}), 
we prepare the following proposition: 

\begin{prop}
\label{prop-shm-hermflat} 
Let $\mathcal{E}$ be a reflexive sheaf on a maximally quasi-\'etale projective klt variety $X$.
Assume that $\mathcal{E}$ has a positively curved singular Hermitian metric $h$.
If $\widehat{c}_1(\mathcal{E}) H^{n-1} =0$ for some ample Cartier divisor $H$, then 
$\mathcal{E}$ is locally free and $h$ is a smooth Hermitian flat metric on $X$. 
\end{prop}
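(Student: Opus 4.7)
The plan is to combine Proposition \ref{prop-almostnef-c2-locallyfree} with a standard curvature-vanishing argument for positively curved metrics whose first Chern class is trivial in the appropriate sense. First, since $h$ is positively curved, Proposition \ref{pro-basic-implication}\,(4) shows that $\mathcal{E}$ is pseudo-effective. Combined with the hypothesis $\widehat{c}_1(\mathcal{E})\,H^{n-1}=0$ and the assumption that $X$ is maximally quasi-\'etale, Proposition \ref{prop-almostnef-c2-locallyfree} immediately yields that $\mathcal{E}$ is a flat locally free sheaf. In particular $\mathcal{E}$ is a genuine holomorphic vector bundle on $X$, and the induced determinant metric $\det h$ is a positively curved singular Hermitian metric on the line bundle $\det\mathcal{E}$.

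Next, to upgrade $h$ to a smooth Hermitian flat metric, I would first show that the Chern curvature current of $(\mathcal{E},h)$ vanishes identically. The curvature current $i\Theta_{\det h}$ is a closed positive $(1,1)$-current on $X$ representing $\widehat{c}_1(\mathcal{E})=c_1(\det\mathcal{E})$. Pulling back along a resolution of singularities $\pi\colon\widetilde{X}\to X$ produces a closed positive current on $\widetilde{X}$ whose cohomology class intersects $(\pi^{*}H)^{n-1}$ trivially. Since $\pi^{*}H$ is nef and big, the standard positivity principle (a closed positive current with vanishing intersection against the top power of a nef and big class must itself be zero) forces $i\Theta_{\det h}=0$. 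Now $i\Theta_h$ is a positive $(1,1)$-current with values in Hermitian endomorphisms of $\mathcal{E}$ whose trace equals $i\Theta_{\det h}=0$. A pointwise linear-algebra argument (the diagonal entries of the Hermitian-matrix-valued current in a local trivialization are closed positive currents summing to zero, hence each diagonal entry vanishes; the off-diagonal entries are then forced to vanish by a Cauchy--Schwarz-type inequality $|T_{ij}|^{2}\leq T_{ii}T_{jj}$) yields $i\Theta_h=0$ as a current.

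Finally, by Raufi's regularity theorem for positively curved singular Hermitian metrics on vector bundles, the vanishing of the curvature current implies that $h|_{X_{\reg}}$ is a smooth Hermitian flat metric on $\mathcal{E}|_{X_{\reg}}$. Since $\mathcal{E}$ is locally free on $X$ and $\codim X_{\sing}\geq 2$, this smooth flat structure extends uniquely by Hartogs to a smooth Hermitian flat metric on $\mathcal{E}$ over all of $X$, which coincides with the original singular metric $h$.

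The main obstacle I expect is the last step, namely transferring Raufi's regularity statement from the smooth locus $X_{\reg}$ to the klt setting of $X$ and matching the smooth flat metric produced there with $h$ across $X_{\sing}$. Here the maximally quasi-\'etale assumption is essential: the flat representation of $\pi_{1}(X_{\reg})$ associated to the flat bundle $\mathcal{E}|_{X_{\reg}}$ extends canonically to a representation of $\pi_{1}(X)$, so the flat Hermitian structure produced on $X_{\reg}$ has a canonical extension to $X$, which in turn ensures the globally extended metric agrees with $h$.
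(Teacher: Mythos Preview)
Your overall strategy coincides with the paper's: first invoke Proposition~\ref{prop-almostnef-c2-locallyfree} to get $\mathcal{E}$ flat locally free, then argue that $h$ itself is smooth and flat. The paper does the second step by pulling $h$ back along a resolution $\pi\colon\widetilde{X}\to X$ (which is legitimate because $\mathcal{E}$ is now locally free) and citing \cite[Lemma~3.5]{HIM22} as a black box on the smooth manifold $\widetilde{X}$; you are essentially unpacking that lemma.

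There is, however, a genuine gap in your unpacking. The ``standard positivity principle'' you invoke---that a closed positive $(1,1)$-current on $\widetilde{X}$ with vanishing intersection against $(\pi^{*}H)^{n-1}$ must be zero---is false when $\pi^{*}H$ is merely nef and big: the current of integration along a $\pi$-exceptional divisor is a counterexample. The fix is immediate once you use what you already proved: since $\mathcal{E}$ is flat locally free, $c_1(\det\mathcal{E})$ is numerically trivial on $X$, hence $\pi^{*}c_1(\det\mathcal{E})$ is numerically trivial on $\widetilde{X}$, and so $\int_{\widetilde{X}}\pi^{*}(i\Theta_{\det h})\wedge\omega^{n-1}=0$ for a genuine K\"ahler form $\omega$ on $\widetilde{X}$. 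That forces $\pi^{*}(i\Theta_{\det h})=0$, and then your trace/Cauchy--Schwarz and Raufi arguments go through on $\widetilde{X}$.

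Your final paragraph overstates the difficulty. The maximally quasi-\'etale hypothesis is already fully consumed in Proposition~\ref{prop-almostnef-c2-locallyfree}; you do not need it again to extend the metric. Once $\mathcal{E}$ is locally free on all of $X$, the pullback $\pi^{*}h$ is defined on all of $\widetilde{X}$, and showing $\pi^{*}h$ is smooth and flat there (as above) gives smoothness and flatness of $h$ on $X$ directly by pushing down---no Hartogs extension or monodromy comparison is needed.
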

\begin{proof}
The sheaf $\mathcal{E}$ is a flat locally free sheaf by Proposition \ref{prop-almostnef-c2-locallyfree}. 
Take a resolution $\pi \colon  \widetilde{X} \to X$. Since $\mathcal{E}$ is locally free, 
the pullback $\pi^{*}h$ is a positively curved  metric on $\pi^{*}\mathcal{E}$. 
By \cite[Lemma 3.5]{HIM22}, the pullback $\pi^{*} h$ is a smooth Hermitian flat metric, and thus so is $h $. 
\end{proof}

\begin{cor}
\label{cor-shm-split} 
Let $X$ be a maximally quasi-\'etale projective klt variety and let
\begin{equation}
\label{eq-exact-refexive-1} 
0 \longrightarrow \mathcal{S}  \longrightarrow \mathcal{E}\longrightarrow  \mathcal{Q} \longrightarrow 0
\end{equation}
be an exact sequence of  reflexive sheaves.
If $\mathcal{E}$ is positively curved and $\widehat{c}_1(\mathcal{Q})H^{n-1} =0$ for some ample Cartier divisor $H$, 
then the above sequence splits $($i.e.,\,$\mathcal{E} \cong \mathcal{S}\oplus \mathcal{Q}$$)$.
\end{cor}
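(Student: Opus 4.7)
The plan is to first show that \(\mathcal{Q}\) is flat and locally free, and then to establish the splitting via a second fundamental form argument on the smooth locus of \(X\).

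As a torsion-free quotient of the positively curved sheaf \(\mathcal{E}\), the sheaf \(\mathcal{Q}\) inherits the quotient singular Hermitian metric, which is again positively curved by the standard extension property of singular Hermitian metrics under torsion-free quotients (see e.g.~\cite[Proposition 2.3.3]{PT18}). Combined with the hypothesis \(\widehat{c}_1(\mathcal{Q})H^{n-1}=0\), Proposition \ref{prop-shm-hermflat} then yields that \(\mathcal{Q}\) is a flat locally free sheaf equipped with a smooth Hermitian flat metric \(h_{\mathcal{Q}}\). In particular, on the big open set \(U\coloneqq X_{\mathrm{reg}}\cap X_{\mathcal{E}}\cap X_{\mathcal{S}}\), whose complement has codimension at least two, the three sheaves \(\mathcal{S}\), \(\mathcal{E}\), \(\mathcal{Q}\) are all locally free and fit into a short exact sequence of holomorphic vector bundles.

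To produce the holomorphic splitting, the next step is to invoke a curvature argument on \(U\). Where the positively curved metric \(h\) on \(\mathcal{E}\) is smooth and non-degenerate, it induces a \(C^{\infty}\) orthogonal decomposition \(\mathcal{E}=\mathcal{S}\oplus\mathcal{S}^{\perp_h}\), in which \(\mathcal{S}^{\perp_h}\) is smoothly identified with \(\mathcal{Q}\) via the quotient projection. The holomorphic obstruction to this \(C^{\infty}\) decomposition being holomorphic is the second fundamental form \(\beta\in A^{1,0}(\Hom(\mathcal{S},\mathcal{Q}))\), which enters the standard curvature identity
\[
\Theta_{\mathcal{Q},\,h|_{\mathcal{Q}}}=\pi_{\mathcal{Q}}\bigl(\Theta_{\mathcal{E},h}|_{\mathcal{Q}}\bigr)+\beta\wedge\beta^{*}.
\]
Both summands on the right are positive semidefinite \((1,1)\)-forms: the first by the positivity of \(h\), and the second manifestly. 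Taking the trace and integrating against \(H^{n-1}\), the hypothesis \(\widehat{c}_{1}(\mathcal{Q})H^{n-1}=0\) forces each nonnegative contribution to vanish identically, so \(\beta\equiv 0\). Hence the \(C^{\infty}\) decomposition is in fact holomorphic on \(U\), and by reflexivity of \(\mathcal{S}\), \(\mathcal{E}\), \(\mathcal{Q}\) together with the fact that \(\codim(X\setminus U)\geq 2\), it extends uniquely to the desired isomorphism \(\mathcal{E}\cong\mathcal{S}\oplus\mathcal{Q}\) on \(X\).

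The main obstacle is executing the curvature and integration step rigorously when \(h\) is only a singular Hermitian metric, so that \(\Theta_{\mathcal{E},h}\) is a closed positive \((1,1)\)-current and the second fundamental form is a priori only defined away from a pluripolar set. To bypass this issue, one may instead pass to a log resolution \(\pi\colon\widetilde{X}\to X\): since \(\mathcal{Q}\) is locally free, the pulled-back sequence \(0\to\pi^{[*]}\mathcal{S}\to\pi^{[*]}\mathcal{E}\to\pi^{*}\mathcal{Q}\to 0\) on the smooth variety \(\widetilde{X}\) falls under the smooth analogue of the splitting result \cite[Corollary 3.3]{HIM22}, and the resulting splitting descends to \(X\) through the reflexivity of all three sheaves.
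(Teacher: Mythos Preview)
Your opening paragraph is correct and matches the paper: the quotient metric on $\mathcal{Q}$ is positively curved, Proposition~\ref{prop-shm-hermflat} then makes $\mathcal{Q}$ a flat locally free sheaf carrying a smooth Hermitian flat metric, and one restricts to a big smooth open set $U\subseteq X$ on which all three sheaves are bundles.

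The gap is in your third paragraph. Passing to a resolution $\pi\colon\widetilde{X}\to X$ introduces two problems you do not address. First, it is unclear that $\pi^{[*]}\mathcal{E}$ is positively curved: the metric $h$ is defined only on the locally free locus $X_{\mathcal{E}}$, and its pullback lives on $\pi^{-1}(X_{\mathcal{E}}\cap X_{\reg})$, whose complement in $\widetilde{X}$ contains the exceptional divisor and hence has codimension one, so the codimension-$\geq 2$ extension trick is unavailable. Second, the exactness of the reflexive-pullback sequence $0\to\pi^{[*]}\mathcal{S}\to\pi^{[*]}\mathcal{E}\to\pi^{*}\mathcal{Q}\to 0$ on all of $\widetilde{X}$ is not justified; taking reflexive hulls need not preserve exactness across the exceptional locus.

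The paper avoids both issues by never leaving $X$. It applies \cite[Theorem~3.8]{HIM22} directly on the smooth open set $U$, and that theorem is precisely the statement making your second-fundamental-form heuristic rigorous for \emph{singular} positively curved metrics on vector bundles over a manifold. Once the sequence splits on $U$, reflexivity together with $\codim(X\setminus U)\geq 2$ extends the splitting to $X$. In short, your second paragraph is the right idea; the correct way to make it rigorous is to invoke the singular-metric splitting theorem on $U$ itself rather than to pass to a resolution.
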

\begin{proof}
Take a big open set $U \subseteq X$ so that $U$ is smooth and the reflexive sheaves in (\ref{eq-exact-refexive-1}) are  locally free over $U$.
Since $\mathcal{Q}$ admits a quotient metric $h_{\mathcal{Q}}$ that is positively curved, which is Hermitian flat on $U$ by Proposition \ref{prop-shm-hermflat}.
By \cite[Theorem 3.8]{HIM22},  the exact sequence (\ref{eq-exact-refexive-1})  splits on $U$:
$$
\mathcal{E}|_{U} \cong \mathcal{S}|_{U} \oplus \mathcal{Q} |_{U}.
$$
Taking the reflexive hull and noting that \(\text{codim}(X\backslash U)\ge 2\) holds, we obtain the desired conclusion. 
\end{proof}

We now prove the Fujita decomposition for reflexive sheaves on maximally quasi-\'etale projective klt varieties. $($cf.\,\cite{Iwa22}, \cite{LS22}$)$. 
\begin{thm}
\label{thm-fujita-decomposition-klt}
Let $\mathcal{E}$ be a reflexive sheaf on a maximally quasi-\'etale projective klt variety $X$.
If $\mathcal{E}$ is pseudo-effective or almost nef, 
then there uniquely exists the exact sequence  
$$
0 \longrightarrow  \mathcal{S} \longrightarrow \mathcal{E} \longrightarrow \mathcal{Q} \longrightarrow 0
$$
with torsion-free sheaves $ \mathcal{S}$ and $\mathcal{Q}$ satisfying the following properties$:$ 
\begin{enumerate}
\item[$(1)$] The reflexive hull  $\mathcal{Q}^{\vee\vee}$ is a flat locally free sheaf.
\item[$(2)$] $ \mathcal{S}$ is a pseudo-effective $($resp.\,almost nef$)$ generically ample reflexive sheaf. 
\end{enumerate}
Furthermore, if $\mathcal{E}$ is positively curved, then we have  
$
\mathcal{E}  \cong  \mathcal{S} \oplus \mathcal{Q}^{\vee\vee}.
$
\end{thm}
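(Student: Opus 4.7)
The plan is to construct the decomposition from the Harder--Narasimhan filtration of $\mathcal{E}$, identify the quotient's structure via Proposition~\ref{prop-almostnef-c2-locallyfree}, and obtain the splitting in the positively curved case via Corollary~\ref{cor-shm-split}. Fix an ample Cartier divisor $H$ on $X$, and consider the Harder--Narasimhan filtration
\[
0 = \mathcal{E}_0 \subset \mathcal{E}_1 \subset \cdots \subset \mathcal{E}_k = \mathcal{E}
\]
of $\mathcal{E}$ with respect to $H^{n-1}$, with semistable quotients $\mathcal{G}_i$ of strictly decreasing slopes $\mu_1>\cdots>\mu_k$. Since $\mathcal{E}$ is pseudo-effective or almost nef, Proposition~\ref{pro-basic-implication} gives generic nefness, so $\mu_k\ge 0$. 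Let $j$ be the largest index with $\mu_j>0$ (with $j=0$ if all slopes vanish), and define $\mathcal{S}\coloneqq\mathcal{E}_j$ and $\mathcal{Q}\coloneqq\mathcal{E}/\mathcal{E}_j$; by saturation, $\mathcal{S}$ is reflexive and $\mathcal{Q}$ is torsion-free.

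To verify (1), note that as a torsion-free quotient of $\mathcal{E}$, the sheaf $\mathcal{Q}$ inherits pseudo-effectivity (via Viehweg's \cite[Lemma 1.4]{Vie83}) or almost nefness (via Lemma~\ref{lem-elementary-lem-almostnef}~(4)), and hence so does its reflexive hull $\mathcal{Q}^{\vee\vee}$. By construction $\widehat{c}_1(\mathcal{Q}^{\vee\vee})\cdot H^{n-1}=c_1(\mathcal{Q})\cdot H^{n-1}=0$, so Proposition~\ref{prop-almostnef-c2-locallyfree}, applied to $\mathcal{Q}^{\vee\vee}$ on the maximally quasi-\'etale klt variety $X$, yields that $\mathcal{Q}^{\vee\vee}$ is a flat locally free sheaf. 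To verify (2), the HN construction immediately gives $\mu_{H^{n-1}}^{\min}(\mathcal{S})=\mu_j>0$; I would upgrade this to generic ampleness for all polarizations by a maximality argument: if $\mathcal{S}\to\mathcal{G}$ were a torsion-free quotient with non-positive slope for some other polarization $L$, then Proposition~\ref{prop-almostnef-c2-locallyfree} applied to the reflexive hull of its slope-zero HN piece would produce a flat locally free quotient of $\mathcal{S}$ that, assembled with $\mathcal{Q}^{\vee\vee}$ via a pushout, strictly enlarges the flat quotient of $\mathcal{E}$, contradicting the HN structure (or equivalently the intrinsic maximality of $\mathcal{Q}^{\vee\vee}$ as a flat quotient of $\mathcal{E}$).

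In the positively curved case, I would apply Corollary~\ref{cor-shm-split} to the short exact sequence $0\to\mathcal{S}\to\mathcal{E}\to\mathcal{Q}^{\vee\vee}\to 0$ obtained by replacing $\mathcal{Q}$ with its reflexive hull on the big open set where they coincide and extending to a sequence of reflexive sheaves on $X$. This yields the splitting $\mathcal{E}\cong\mathcal{S}\oplus\mathcal{Q}^{\vee\vee}$. Uniqueness is a clean slope comparison: for two decompositions $(\mathcal{S}_i,\mathcal{Q}_i)$, the composition $\mathcal{S}_1\hookrightarrow\mathcal{E}\twoheadrightarrow\mathcal{Q}_2\hookrightarrow\mathcal{Q}_2^{\vee\vee}$ maps a generically ample reflexive sheaf into a numerically flat (semistable of slope zero) locally free sheaf; its image, being simultaneously of positive minimum slope and of non-positive slope, must vanish. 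Hence $\mathcal{S}_1\subseteq\mathcal{S}_2$, and by symmetry $\mathcal{S}_1=\mathcal{S}_2$.

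The main obstacle is showing that $\mathcal{S}$ itself inherits pseudo-effectivity or almost nefness in the general (not positively curved) case. Subsheaves of pseudo-effective or almost nef torsion-free sheaves are not of the same type in general (cf.~Example~\ref{exa-ideal-sheaf-blown-up}), so the argument must crucially exploit that the complementary quotient $\mathcal{Q}^{\vee\vee}$ is Hermitian flat and hence contributes no positivity: on a very general complete-intersection curve $C$ (chosen by Mehta--Ramanathan outside the countable union of bad loci including $\mathbb{S}(\mathcal{E})$ and the non-locally-free locus of $\mathcal{E}$), the restriction $\mathcal{Q}^{\vee\vee}|_C$ is numerically flat, and a careful analysis of $0\to\mathcal{S}|_C\to\mathcal{E}|_C\to\mathcal{Q}^{\vee\vee}|_C\to 0$ combined with Lemma~\ref{lem-elementary-pro-nef} should force $\mathcal{S}|_C$ to be nef, establishing almost nefness of $\mathcal{S}$. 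The pseudo-effective case would be handled by a parallel argument at the level of singular Hermitian metrics on reflexive symmetric powers, isolating the contribution of the flat quotient.
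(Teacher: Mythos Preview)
Your overall architecture matches the paper's: build $\mathcal{S}$ and $\mathcal{Q}$ from the Harder--Narasimhan filtration with respect to a fixed $H$, invoke Proposition~\ref{prop-almostnef-c2-locallyfree} for flatness of $\mathcal{Q}^{\vee\vee}$, deduce uniqueness from slope comparison, and apply Corollary~\ref{cor-shm-split} for the splitting. The gap is precisely the point you flag as the ``main obstacle''---showing that $\mathcal{S}$ is again pseudo-effective (resp.\ almost nef)---and your proposed resolution does not close it.

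The paper handles this in one line: from the exact sequence $0\to\mathcal{S}\to\mathcal{E}\to\mathcal{Q}\to 0$ with $r\coloneqq\rk\mathcal{Q}$ one has, on the locally free locus, a natural surjection $\wedge^{r+1}\mathcal{E}\twoheadrightarrow\mathcal{S}\otimes\det\mathcal{Q}$, hence a generically surjective morphism
\[
\wedge^{[r+1]}\mathcal{E}\,[\otimes]\,(\det\mathcal{Q})^{\vee}\longrightarrow\mathcal{S}.
\]
Since $\mathcal{Q}^{\vee\vee}$ is flat, $(\det\mathcal{Q})^{\vee}$ is a numerically trivial line bundle, so the source inherits pseudo-effectivity (resp.\ almost nefness) from $\mathcal{E}$ via Lemma~\ref{lem-elementary-lem-almostnef}~(3),(5) and \cite[Lemma~1.4]{Vie83}; then $\mathcal{S}$ does too by Lemma~\ref{lem-elementary-lem-almostnef}~(4). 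Your curve-by-curve argument in the almost nef case would ultimately need this same exterior-power identity on each curve to conclude that $\mathcal{S}|_C$ is nef from the sequence $0\to\mathcal{S}|_C\to\mathcal{E}|_C\to\mathcal{Q}^{\vee\vee}|_C\to 0$; without it there is no general mechanism forcing nefness of a subbundle with numerically flat complement. In the pseudo-effective case your metric sketch is too vague to be a proof.

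There is also an order-of-dependence issue in your generic ampleness step. To apply Proposition~\ref{prop-almostnef-c2-locallyfree} to a slope-zero quotient of $\mathcal{S}$ you must already know that quotient is pseudo-effective or almost nef, which requires $\mathcal{S}$ to be so. Once positivity of $\mathcal{S}$ is established first (as in the paper), the generic ampleness argument becomes much simpler than your pushout: a nonzero flat locally free quotient $\mathcal{R}^{\vee\vee}$ of $\mathcal{S}$ would have $\mu_{H}(\mathcal{R})=0$, directly contradicting $\mu_{H}^{\min}(\mathcal{S})>0$.
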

\begin{proof}
Take an ample Cartier divisor $H$. 
If $\mu_{H}^{\text{min}}(\mathcal{E}) >0$, then we take $ \mathcal{S}\coloneqq  \mathcal{E} $ and $\mathcal{Q} \coloneqq 0$.
Thus, we may assume that $\mu_{H}^{\text{min}}(\mathcal{E}) =0$ (cf.~Proposition \ref{pro-basic-implication}).
Consider the Harder-Narasimhan filtration of $\mathcal{E}$ with respect to $H^{n-1}$:
$$
0 = \mathcal{E}_0 \subsetneq \mathcal{E}_1 \subsetneq \cdots \subsetneq \mathcal{E}_l =\mathcal{E}.
$$
Set $ \mathcal{S}\coloneqq  \mathcal{E}_{l-1} $ and $\mathcal{Q} \coloneqq  \mathcal{E}/\mathcal{E}_{l-1}$.
Since $\mathcal{Q}$ is pseudo-effective (or almost nef)  and $\mu_{H}(\mathcal{Q}) = \mu_{H}^{\text{min}}(\mathcal{E})=0 $, by Proposition \ref{prop-almostnef-c2-locallyfree}, the reflexive hull $\mathcal{Q}^{\vee\vee}$ is a flat locally free sheaf.
Thus,  the sheaf $\mathcal{S}$ is pseudo-effective $($or almost nef$)$ by noting that there is a generically surjective morphism
$$
\wedge^{\text{rank}\mathcal{Q}+1}\mathcal{E}\otimes \det(\mathcal{Q}^{\vee}) \to  \mathcal{S}.
$$
The quotient sheaf $\mathcal{E}/ \mathcal{S}$ is torsion-free, and thus $ \mathcal{S}$ is saturated in $\mathcal{E}$. 
In particular, the sheaf $ \mathcal{S}$ is reflexive.

The minimum slope satisfies that $\mu^{\min}_{H}(\mathcal{S}) >0$ by $\mu_{H}^{\text{min}} (\mathcal{E}_{i}/\mathcal{E}_{i-1}) >0$ for any $1 \le i \le l-1$. 
We now prove that $\mathcal{S}$ is generically ample. 
For this purpose, we suppose that $\mu^{\min}_{H_1 \cdots H_{n-1}} (\mathcal{E}) =0$ for some ample Cartier divisors $H_{1}, \ldots, H_{n-1}$. 
Thus, by the above argument, we can take a quotient sheaf $\mathcal{S} \to \mathcal{R}$ such that 
$\mathcal{R}^{\vee\vee}$ is a flat locally free sheaf, which contradicts  $\mu^{\min}_{H}(\mathcal{S}) >0$.

The uniqueness follows from the following fact: any sheaf morphism $\mathcal{G} \to \mathcal{H}$ is the zero map if $\mu_{H}^{\min}(\mathcal{G}) > \mu_{H}^{\max}(\mathcal{H})$. 
The last assertion for positively curved sheaves follows from Corollary \ref{cor-shm-split}.
\end{proof}

\begin{rem}
\label{rem-maximal-flat-locally-free}
Under the setting in Theorem \ref{thm-fujita-decomposition-klt}, the maximal destabilizing subsheaf of $\mathcal{E}^{\vee}$ is independent of the choice of ample Cartier divisors $H_1,\ldots, H_{n-1}$.
Moreover, the maximal destabilizing subsheaf of $\mathcal{E}^{\vee}$ is the maximal flat locally free sheaf contained in $\mathcal{E}^{\vee}$.  
Indeed, for the maximal destabilizing subsheaf $\mathcal{E}^{\vee}_{\max}$ of $\mathcal{E}^{\vee}$ with respect to 
$\alpha \coloneqq H_1 \cdots H_{n-1}$ for any ample Cartier divisors $H_1,\ldots, H_{n-1}$, 
we consider the following morphism$:$
$$\gamma \colon  \mathcal{E}^{\vee}_{\max} \longrightarrow \mathcal{E}^{\vee} \longrightarrow \mathcal{S}^{\vee}.$$
By $\mu_{\alpha}^{\text{min}}(\mathcal{E}^{\vee}_{\max}) > \mu_{\alpha}^{\text{max}}(\mathcal{S}^{\vee}) $, the morphism $\gamma$ is the zero map. 
Thus, by the universality of kernel, we obtain an injection $\mathcal{E}^{\vee}_{\max} \to \mathcal{Q}^{\vee}$, which implies that  $\mathcal{E}^{\vee}_{\max} = \mathcal{Q}^{\vee}$ by the semi-stability of \(\mathcal{Q}^{\vee}\).
\end{rem}

\begin{prop}
\label{prop-genericallyample-etale}
Let $\mathcal{E}$ be a reflexive sheaf of a projective klt variety $X$.
Assume that $\mathcal{E}$ is pseudo-effective or almost nef.
Then, we have$:$
\begin{enumerate}
\item[$(1)$] The maximal destabilizing subsheaf of $\mathcal{E}^{\vee}$ is independent of the choice of ample Cartier divisors $H_1, \ldots, H_{n-1}$.
\item[$(2)$] Let $\nu \colon  \widehat{X} \to X$ be a finite quasi-\'etale Galois cover such that $\widehat{X}$ is maximally quasi-\'etale. 
Then,  the maximal destabilizing sheaf of $(\nu^{[*]}\mathcal{E} )^{\vee}$  coincides with the reflexive pullback of that of $\mathcal{E}^{\vee}$. 
In particular, the sheaf $\mathcal{E}$ is generically ample if and only if $\nu^{[*]}\mathcal{E}$ is generically ample.
\end{enumerate}
 \end{prop}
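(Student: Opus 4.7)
The plan is to transfer the problem to the maximally quasi-\'etale cover $\nu\colon \widehat{X} \to X$, where Remark~\ref{rem-maximal-flat-locally-free} already provides the desired structure, and then descend via the Galois action.

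First, since $\nu$ is quasi-\'etale, $\nu^{[*]}\mathcal{E}$ remains pseudo-effective (resp.\,almost nef) by Lemma~\ref{lem-elementary-lem-almostnef}~(4). Applying Remark~\ref{rem-maximal-flat-locally-free} on the maximally quasi-\'etale variety $\widehat{X}$, I obtain the maximal destabilizing subsheaf $\widehat{\mathcal{S}} \subseteq (\nu^{[*]}\mathcal{E})^{\vee}$, intrinsically characterized as the maximal flat locally free subsheaf and independent of the choice of ample Cartier divisors on $\widehat{X}$. The Galois group $G = \textup{Gal}(\widehat{X}/X)$ acts naturally on $(\nu^{[*]}\mathcal{E})^{\vee}$, and the uniqueness of the maximal destabilizing subsheaf with respect to the $G$-invariant polarization $\nu^*H_1,\ldots,\nu^*H_{n-1}$ forces $\widehat{\mathcal{S}}$ to be $G$-invariant. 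Since $\nu$ is \'etale in codimension one, $G$-equivariant descent on the \'etale locus, extended across the branch locus by reflexive hull, produces a reflexive subsheaf $\mathcal{S}'\subseteq \mathcal{E}^{\vee}$ with $\nu^{[*]}\mathcal{S}' = \widehat{\mathcal{S}}$.

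Next I would verify that $\mathcal{S}'$ coincides with the maximal destabilizing subsheaf of $\mathcal{E}^{\vee}$ for an arbitrary choice of ample Cartier divisors $H_1,\ldots,H_{n-1}$ on $X$. In characteristic zero the pullback under the finite morphism $\nu$ preserves semistability and multiplies slopes by $\deg \nu$ via the projection formula. Hence the semistability of $\widehat{\mathcal{S}}$ descends to that of $\mathcal{S}'$, while any torsion-free $\mathcal{T}\subseteq \mathcal{E}^{\vee}$ with $\mu_{H_1\cdots H_{n-1}}(\mathcal{T}) > \mu_{H_1\cdots H_{n-1}}(\mathcal{S}')$ would pull back to a sheaf strictly more destabilizing than $\widehat{\mathcal{S}}$, a contradiction. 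Since the construction of $\mathcal{S}'$ makes no reference to the $H_i$ on $X$, this simultaneously establishes both (1) and (2). For the final equivalence, Theorem~\ref{thm-fujita-decomposition-klt} applied on $\widehat{X}$ shows that $\nu^{[*]}\mathcal{E}$ is generically ample if and only if the flat part of its Fujita decomposition vanishes, equivalently $\widehat{\mathcal{S}} = 0$; since $\widehat{\mathcal{S}} = \nu^{[*]}\mathcal{S}'$, this is equivalent to $\mathcal{S}' = 0$, and hence, by the characterization above, to $\mathcal{E}$ being generically ample.

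The main technical hurdle I anticipate is the Galois descent of the $G$-invariant reflexive subsheaf $\widehat{\mathcal{S}}$ on a quasi-\'etale cover together with the preservation of semistability under a finite Galois cover in characteristic zero. Both ingredients are classical, but they must be combined carefully: descent is carried out on the \'etale locus and extended across the codimension $\geq 2$ branch locus via reflexive hull, using the normality of $X$. Once this is in place, the slope comparisons via the projection formula are routine, and the argument proceeds as outlined.
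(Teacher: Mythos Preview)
Your proposal is correct and follows essentially the same route as the paper: both proofs descend the maximal destabilizing subsheaf from $\widehat{X}$ to $X$ via Galois invariance over a quasi-\'etale cover and then invoke Remark~\ref{rem-maximal-flat-locally-free} to conclude independence of the polarization. The paper is slightly more economical: it takes the maximal destabilizing subsheaves $\mathcal{G}\subseteq\mathcal{E}^\vee$ and $\widehat{\mathcal{G}}\subseteq(\nu^{[*]}\mathcal{E})^\vee$ simultaneously, establishes $\nu^{[*]}\mathcal{G}\subseteq\widehat{\mathcal{G}}$ directly, and for the reverse inclusion cites \cite[Proposition~2.16]{GKPT19b} (precisely the descent step you flag as the main technical hurdle) to obtain a saturated $\mathcal{F}\subseteq\mathcal{E}^\vee$ with $\nu^{[*]}\mathcal{F}=\widehat{\mathcal{G}}$, whence $\mathcal{F}\subseteq\mathcal{G}$ and equality follows---this spares you from re-verifying semistability and maximality of the descended sheaf from scratch.
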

 
 \begin{proof}
 Conclusion (1) follows from (2) and Remark \ref{rem-maximal-flat-locally-free}.
 For the proof of conclusion (2), we fix ample Cartier divisors $H_1,\ldots, H_{n-1}$ on $X$. Set  $\alpha \coloneqq  H_1\cdots H_{n-1}$ and $\widehat{\mathcal{E}} \coloneqq  \nu^{[*]}\mathcal{E}$.
 Take the  maximal destabilizing sheaf $\mathcal{G}$ (resp.\,$\widehat{\mathcal{G}}$) of $\mathcal{E}^{\vee}$ (resp.\,$\widehat{\mathcal{E}}^{\vee}$) with respect to $\alpha$ (resp.\,$\nu^{*}\alpha$). 
Then, we can easily see that $\nu^{[*]}\mathcal{G} \subseteq \widehat{\mathcal{G}}$.
On the other hand, 
by \cite[Proposition 2.16]{GKPT19b}, there is a saturated subsheaf $\mathcal{F} \subseteq \mathcal{E}^{\vee}$ with $\widehat{\mathcal{G}}= \nu^{[*]}\mathcal{F}$. 
Thus, we obtain $\mathcal{F} \subseteq \mathcal{G}$, and hence $\widehat{\mathcal{G}} \subseteq \nu^{[*]}\mathcal{G}$.
Under this situation, the sheaf $\mathcal{E}$ is generically ample if and only if $\mathcal{G}=0$, and thus the last assertion of (2) is obtained. 
\end{proof}

\section{Projective klt varieties with positively curved tangent sheaf}\label{Sec-4}

This section is devoted to the proof of Theorem \ref{thm-positively-curved}.
We first prove the following lemma for the reader's convenience though it is known to experts.

\begin{lem}
\label{lem-qfact} 
Let $\phi \colon  X \to Y$ be an equidimensional fibration between normal projective varieties. 
Then, we have$:$
\begin{itemize}
\item[$(1)$] If $X$ is $\mathbb{Q}$-factorial, then so is $Y$. 
\item[$(2)$] If $X$ has klt singularities and $K_Y$ is $\mathbb{Q}$-Cartier, 
then $Y$ has klt singularities. 
\end{itemize}
\end{lem}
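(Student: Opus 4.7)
Both statements are local on $Y$, so I work on affine opens.

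For $(1)$, let $D$ be a prime Weil divisor on $Y$. Equidimensionality of $\phi$ forces every irreducible component of $\phi^{-1}(D)$ to have codimension one in $X$, so the cycle-theoretic pullback $\phi^{*}D$ is a well-defined Weil divisor on $X$. By $\mathbb{Q}$-factoriality, there exists $m\in\mathbb{Z}_{+}$ such that $m\phi^{*}D$ is Cartier. To descend this to $Y$, I will argue that $\phi_{*}\mathcal{O}_{X}(m\phi^{*}D)\cong \mathcal{O}_{Y}(mD)$ as invertible sheaves: the two agree on the big open set where $\phi$ is flat by the projection formula combined with $\phi_{*}\mathcal{O}_{X}=\mathcal{O}_{Y}$, and normality of $Y$ together with reflexivity of $\mathcal{O}_{Y}(mD)$ extends the identification to all of $Y$. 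Invertibility of the pushforward stems from the fact that the line bundle $\mathcal{O}_{X}(m\phi^{*}D)$ is trivial on each fiber of $\phi$ over $Y\setminus\mathrm{Supp}(D)$.

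For $(2)$, I compare discrepancies through base change by a log resolution. Take $\mu\colon\widetilde{Y}\to Y$ a log resolution with exceptional divisors $E_{i}$ and discrepancies $a_{i}$ defined by $K_{\widetilde{Y}}=\mu^{*}K_{Y}+\sum a_{i}E_{i}$; the goal is to prove $a_{i}>-1$. Let $\widetilde{X}$ be the normalization of the main component of $X\times_{Y}\widetilde{Y}$, with induced morphisms $\pi\colon \widetilde{X}\to X$ (birational) and $\widetilde{\phi}\colon\widetilde{X}\to\widetilde{Y}$; the latter remains equidimensional, since fiber dimensions are preserved under this base change thanks to equidimensionality of $\phi$. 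As $K_{X}$ and $K_{Y}$ are both $\mathbb{Q}$-Cartier, I may set $K_{X/Y}:=K_{X}-\phi^{*}K_{Y}$, and a direct calculation using $K_{\widetilde{X}}=\widetilde{\phi}^{*}K_{\widetilde{Y}}+K_{\widetilde{X}/\widetilde{Y}}$ yields
\begin{equation*}
K_{\widetilde{X}}-\pi^{*}K_{X}=\widetilde{\phi}^{*}\Bigl(\sum a_{i}E_{i}\Bigr)+\bigl(K_{\widetilde{X}/\widetilde{Y}}-\pi^{*}K_{X/Y}\bigr).
\end{equation*}
The last parenthesis is $\pi$-exceptional, since $\widetilde{\phi}$ and $\phi$ coincide on the $\mu$-isomorphic locus, and its complement has codimension-two image in $X$ by equidimensionality of $\phi$. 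Moreover, each component of $\widetilde{\phi}^{*}E_{i}$ is itself $\pi$-exceptional, because $\mu(E_{i})$ has codimension at least two in $Y$ and equidimensionality of $\phi$ transfers this to $X$. Reading off coefficients along such a component and using that every $\pi$-discrepancy exceeds $-1$ by the klt property of $X$, I obtain $a_{i}>-1$.

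The main obstacle lies in $(2)$: one must verify carefully that $K_{\widetilde{X}/\widetilde{Y}}-\pi^{*}K_{X/Y}$ is $\pi$-exceptional and that equidimensionality is preserved after the normalized fiber product. In $(1)$, the subtlety is to ensure invertibility of the descended pushforward at every point of $Y$ rather than just in codimension one. In both parts, equidimensionality is the crucial hypothesis: it ensures both that $\phi^{*}$ of a Weil divisor is a Weil divisor and that preimages of codimension-two sets remain codimension two in $X$, which is precisely what makes the exceptional divisor bookkeeping in $(2)$ work out and what trivializes the line bundle fiberwise in $(1)$.
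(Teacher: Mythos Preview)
Your proposal has genuine gaps in both parts, and the paper takes a quite different (and simpler) route.

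\textbf{Part (1).} You yourself flag the issue: invertibility of $\phi_{*}\mathcal{O}_{X}(m\phi^{*}D)$ must be checked at \emph{every} point of $Y$. But your only argument for invertibility is fiberwise triviality over $Y\setminus\mathrm{Supp}(D)$, which is the trivial case (there $\mathcal{O}_{Y}(mD)$ is already $\mathcal{O}_{Y}$). Nothing you wrote addresses points of $\mathrm{Supp}(D)$, and since $\phi$ need not be flat (neither $X$ Cohen--Macaulay nor $Y$ smooth is assumed), cohomology-and-base-change does not apply even there. The paper instead slices $X$ by $m=\dim X-\dim Y$ general very ample hypersurfaces to obtain $Z\subseteq X$ with $\phi|_{Z}\colon Z\to Y$ \emph{finite} surjective (equidimensionality is what makes this finite). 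Then $(\phi|_{Z})^{*}D=(\phi^{*}D)|_{Z}$ is $\mathbb{Q}$-Cartier, and one invokes \cite[Lemma 5.16]{KM98} to descend $\mathbb{Q}$-Cartierness along a finite surjection.

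\textbf{Part (2).} Your displayed identity is correct, and the $\pi$-exceptionality claims are fine, but the final sentence does not follow. From $X$ klt you know the coefficient $d_{F}$ of a component $F$ of $\widetilde{\phi}^{*}E_{i}$ in $K_{\widetilde{X}}-\pi^{*}K_{X}$ exceeds $-1$. Your identity gives $d_{F}=a_{i}\,m_{F}+b_{F}$ where $m_{F}\geq 1$ is the coefficient of $F$ in $\widetilde{\phi}^{*}E_{i}$ and $b_{F}$ is the coefficient of $F$ in $K_{\widetilde{X}/\widetilde{Y}}-\pi^{*}K_{X/Y}$. To conclude $a_{i}>-1$ you would need $b_{F}\leq m_{F}-1$, i.e.\ a Hurwitz-type bound comparing the ramification of $\widetilde{\phi}$ along $F$ with $\pi^{*}K_{X/Y}$; you have not supplied this, and it is not obvious for non-finite $\widetilde{\phi}$ with $\widetilde{X}$ merely normal. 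The paper again slices to $Z\to Y$ finite, passes to an index-one cover so that $K_{Y}$ is Cartier, and then argues: $Z$ klt $\Rightarrow$ $Z$ has rational singularities $\Rightarrow$ $Y$ has rational singularities (descent along finite maps, \cite[Proposition 5.13]{KM98}) $\Rightarrow$ $Y$ is canonical since $K_{Y}$ is Cartier (\cite[Corollary 5.24]{KM98}).

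In short, the paper's unifying trick is to reduce both statements to a finite cover by slicing, whereas your direct approaches each leave a nontrivial computation unfinished.
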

\begin{proof}
We reduce the proofs to the case where $\phi \colon  X \to Y$ is a finite morphism. 
Taking general hypersurfaces $\{H_i\}_{i=1}^m$ on $X$, 
we consider the variety $Z\coloneqq X \cap H_1 \cap \cdots \cap H_m$, where $m\coloneqq \dim X-\dim Y$. 
Then, the induced morphism $\phi|_Z \colon  Z \to Y$ is a finite (surjective) morphism by the equidimensionality of $\phi \colon  X \to Y$. 

\smallskip
(1). 
Let $D$ be a Weil divisor on $Y$. 
The pullbacks $\phi^* D$ and $(\phi|_Z) ^* D$ can be defined as a Weil divisor by equidimensionality 
(for example, see \cite[Definition 3.5]{Dru18}). 
The Weil divisor $\phi^* D$ is $\mathbb{Q}$-Cartier by assumption, 
so is $(\phi|_Z) ^* D=(\phi^* D)|_Z$. 
Since $\phi|_Z \colon  Z \to Y$ is a finite morphism, 
the Weil divisor $D$ is also $\mathbb{Q}$-Cartier by \cite[Lemma 5.16]{KM98}.

\smallskip
(2). 
The conclusion is local in $Y$. 
Taking an index one cover $Y' \to Y$ of $Y$ and replacing $X$ (resp.\,$Z$) with the fiber product $X \times_{Y} Y'$ (resp.\,$Z \times_{Y} Y'$),  
we may assume that $K_Y$ is Cartier. 
By construction, the variety $Z$ has klt singularities, and thus $Z$ has rational singularities. 
Since the restriction $\phi|_{Z} \colon  Z \to Y$ is a finite morphism, the base variety $Y$  also has rational singularities by \cite[Proposition 5.13]{KM98}.  
Since $K_Y$ is Cartier, the variety $Y$ has canonical singularities by \cite[Corollary 5.24]{KM98}. 
\end{proof}

We begin the proof of Theorem \ref{thm-positively-curved}. 

\begin{proof}[Proof of Theorem \ref{thm-positively-curved}]

Let $X$ be a projective klt variety with positively curved tangent sheaf. 
We first show that, under the additional assumption that $X$  is $\mathbb{Q}$-factorial and maximally quasi-\'etale,  
there exists an equidimensional MRC fibration $X \to Y$ such that $Y$ is a quasi-\'etale quotient of an abelian variety, 
from which the general case will be deduced at the end of the proof. 
To this end, we prove Proposition \ref{prop-wreg} under a somewhat technical situation explained below.

Take an MRC fibration $\phi \colon  X \dashrightarrow Y$ onto a normal projective variety $Y$ and 
a resolution $\pi \colon  \widetilde X \to X$ of the indeterminacy locus of $\phi$ and singularities of $X$. 
We have the following commutative diagram$:$
\begin{equation*}
\xymatrix@C=40pt@R=30pt{
\widetilde X  \ar[rr]^\pi \ar[rd]_{\varphi\ \ \ }  &  & X \ar@{-->}[ld]^{\phi \ \ \ }\\ 
&   Y. &  \\   
}
\end{equation*}
Hereafter, we assume that the canonical divisor $K_{Y}$ is a pseudo-effective $\mathbb{Q}$-Cartier divisor, \(Y\) is klt, and 
$\codim \pi(\varphi^{-1}(Y_{\sing})) \geq 2$ holds. 
This assumption, which looks technical, is naturally satisfied in our applications (see Remark \ref{rem-wreg}). 

\begin{rem}\label{rem-wreg}
(1) The above assumption is automatically satisfied if $Y$ is a smooth projective variety. 
Indeed,  the smooth base variety $Y$ of MRC fibrations is non-uniruled by \cite{GHS03}. 
Hence, the canonical divisor $K_Y$ is pseudo-effective by \cite{BDPP13}. 
\\
(2) The assumption of $\codim \pi(\varphi^{-1}(Y_{\sing})) \geq 2$  is satisfied
if $\phi \colon  X \dashrightarrow Y$ is an everywhere-defined  equidimensional fibration. 
\end{rem}

The cotangent sheaf $\Omega_{Y}$ is  locally free on $Y_{\reg}$.  
Hence, we have the natural injective sheaf morphism ${\varphi}^{*} \Omega_Y \to \Omega_{\widetilde X}$ 
on $\widetilde X \setminus \varphi^{-1}(Y_{\sing})$. 
By our assumption, the push-forward $\pi_*$ induces 
the injective sheaf morphism 
$\pi_*{\varphi}^{*} \Omega_Y \to \pi_*\Omega_{\widetilde X} $ on a big Zariski open set $X \setminus \pi(\varphi^{-1}(Y_{\sing}))$ of $X$. 
By considering the dual sheaves, 
we obtain the exact sequence:  
\begin{align}\label{eq-1}
0 \longrightarrow \mathcal{S}\coloneqq  \Ker r \longrightarrow 
\mathcal{T}_X \xrightarrow{\quad r \quad} \mathcal{Q}\coloneqq  (\pi_*{\varphi}^{*} \Omega_Y)^{\vee} \text{ on } X. 
\end{align}
Note that the sheaf morphism $\mathcal{T}_X \to \mathcal{Q}$ is generically surjective by construction. 
Then, we have$:$

\begin{prop}\label{prop-wreg}
In the above situation, we have$:$
\begin{itemize}
\item[$(1)$] $\mathcal{Q}$ is a Hermitian flat locally free sheaf on $X$. 
\item[$(2)$] The exact sequence \eqref{eq-1} splits on $X$ $($i.e.,\,$\mathcal{T}_X \cong \mathcal{S} \oplus \mathcal{Q}$ holds on $X$$).$
\item[$(3)$] $\mathcal{S} \subseteq \mathcal{T}_X $ is an algebraically integrable and weakly regular foliation. 
\item[$(4)$] $\det \mathcal{Q} \sim_\mathbb{Q} 0$.  
\end{itemize}
\end{prop}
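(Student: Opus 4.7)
The plan is to prove the four parts in the order (4), (1), (2), (3), with the Fujita-type splitting machinery of Section~\ref{Sec-3} as the central engine. First set up: let $\mathcal{Q}' \coloneqq r(\mathcal{T}_X) \subseteq \mathcal{Q}$ denote the image of the morphism $r$ in \eqref{eq-1}; this is a torsion-free quotient of $\mathcal{T}_X$ of the same rank as $\mathcal{Q}$, and since $\mathcal{Q}=(\pi_*\varphi^*\Omega_Y)^\vee$ is reflexive, there is an inclusion $\mathcal{Q}'^{\vee\vee}\hookrightarrow\mathcal{Q}$ whose determinant difference $\det\mathcal{Q}-\det\mathcal{Q}'$ is effective as a Weil divisor.

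For (4), I would start by showing that $\det\mathcal{Q}$ is numerically trivial. Since $\mathcal{T}_X$ is positively curved, hence pseudo-effective by Proposition~\ref{pro-basic-implication}(4), the torsion-free quotient $\mathcal{Q}'$ has pseudo-effective determinant by \cite[Lemma~1.4]{Vie83}. On the big open set $U = X\setminus \pi(\varphi^{-1}(Y_{\sing}))$, the codimension-two hypothesis together with the diagram identifies $\mathcal{Q}|_U$ with the pullback $\phi^*\mathcal{T}_{Y_{\reg}}|_U$, so $-\det\mathcal{Q}|_U$ is $\mathbb{Q}$-linearly equivalent to $\phi^*K_Y|_U$; since $K_Y$ is a pseudo-effective $\mathbb{Q}$-Cartier divisor, $-\det\mathcal{Q}$ is a pseudo-effective $\mathbb{Q}$-Weil divisor on $X$. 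Combining pseudo-effectivity of $\det\mathcal{Q}'$ with pseudo-effectivity of $-\det\mathcal{Q}$ and effectivity of $\det\mathcal{Q}-\det\mathcal{Q}'$, all three classes must be numerically trivial; in particular the effective divisor $\det\mathcal{Q}-\det\mathcal{Q}'$ vanishes, which upgrades $\mathcal{Q}'^{\vee\vee}\hookrightarrow\mathcal{Q}$ to an equality. To pass from $\det\mathcal{Q}\equiv 0$ to $\det\mathcal{Q}\sim_\mathbb{Q} 0$, I would use an abundance-type argument: $K_Y$ is a pseudo-effective, numerically trivial $\mathbb{Q}$-Cartier divisor on the klt variety $Y$, hence $K_Y\sim_\mathbb{Q} 0$ by Nakayama/Kawamata's abundance for $\kappa=0$, and this transfers to $\det\mathcal{Q}$ via the identification on $U$.

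With (4) in hand, the condition $\widehat{c}_1(\mathcal{Q})H^{n-1}=0$ holds for any ample $H$, and Proposition~\ref{prop-almostnef-c2-locallyfree} applied to the pseudo-effective reflexive sheaf $\mathcal{Q}$ on the maximally quasi-\'etale $X$ gives that $\mathcal{Q}$ is flat locally free. The quotient metric induced on $\mathcal{Q}$ by the positively curved metric on $\mathcal{T}_X$ is positively curved with trivial first Chern class, so Proposition~\ref{prop-shm-hermflat} upgrades it to a Hermitian flat metric, establishing (1). Part (2) is then immediate from Corollary~\ref{cor-shm-split} applied to \eqref{eq-1}. For (3), saturatedness of $\mathcal{S}$ is automatic since $\mathcal{Q}$ is torsion-free; on the big open locus where $\phi$ restricts to a morphism into $Y_{\reg}$, $\mathcal{S}$ coincides with the relative tangent sheaf $\mathcal{T}_{X/Y}$, which is closed under the Lie bracket, and this bracket-closure extends across codimension two by reflexivity of $\mathcal{S}$ and $\mathcal{T}_X$, so $\mathcal{S}$ is an algebraically integrable foliation whose leaves are the fibers of $\phi$. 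For weak regularity, the decomposition $\wedge^{[r]}\mathcal{T}_X=\bigoplus_{i+j=r}\wedge^{[i]}\mathcal{S}[\otimes]\wedge^{j}\mathcal{Q}$ (valid because $\mathcal{Q}$ is locally free and $\mathcal{T}_X=\mathcal{S}\oplus\mathcal{Q}$) exhibits $\det\mathcal{S}$ as a direct summand of $\wedge^{[r]}\mathcal{T}_X$, whose dual furnishes the required surjection $\Omega_X^{[r]}[\otimes]\mathcal{O}_X(-K_\mathcal{S})\to\mathcal{O}_X$.

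The principal obstacle is the $\sim_\mathbb{Q}$-upgrade in (4): the pseudo-effective/anti-pseudo-effective comparison delivers numerical triviality cheaply, but promoting this to $\mathbb{Q}$-linear triviality genuinely requires abundance input for $K_Y$ on the MRC base, which is why the technical hypothesis $K_Y$ pseudo-effective and $\mathbb{Q}$-Cartier enters the setup. A secondary, more bookkeeping-type subtlety is that the identification $\mathcal{Q}'^{\vee\vee}=\mathcal{Q}$ must hold globally on $X$ rather than just on a big open set, and this is exactly what the ``effective plus numerically trivial implies zero'' step handles.
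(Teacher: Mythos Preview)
Your overall strategy is the same as the paper's, and parts (1), (2), (3) are handled correctly with the same ingredients (Propositions~\ref{prop-almostnef-c2-locallyfree} and~\ref{prop-shm-hermflat}, Corollary~\ref{cor-shm-split}). The introduction of the intermediate image $\mathcal{Q}'$ is more cautious than the paper, which simply notes that $\mathcal{Q}$ is positively curved as the reflexive target of a generically surjective map from $\mathcal{T}_X$; your extra bookkeeping is harmless.

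There is, however, a genuine gap in your treatment of (4). You assert that ``$K_Y$ is a pseudo-effective, numerically trivial $\mathbb{Q}$-Cartier divisor on the klt variety $Y$'', but you have only established $\det\mathcal{Q}\equiv 0$ on $X$, which amounts to $\pi_*\varphi^*K_Y\equiv 0$. Since $\phi\colon X\dashrightarrow Y$ is merely a rational map, this does not descend to $K_Y\equiv 0$ on $Y$: on the resolution $\widetilde{X}$ one only gets $\varphi^*K_Y \equiv \pi^*(\pi_*\varphi^*K_Y)+E$ with $E$ a $\pi$-exceptional (not necessarily effective, not $\varphi$-vertical) divisor, and intersecting with curves over $Y$ gives no control.

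The paper fills this gap by staying on $\widetilde{X}$ and invoking Nakayama's numerical dimension: the decomposition $\varphi^*(mK_Y)=\pi^*(m\pi_*\varphi^*K_Y)+(\text{$\pi$-exceptional})$ forces $\nu(\varphi^*K_Y)\le 0$ (Nakayama, Chapter~V), while pseudo-effectivity of $K_Y$ gives $\nu(\varphi^*K_Y)\ge 0$; hence $\nu(K_Y)=0$, and Nakayama's abundance \cite[V, Cor.~4.9]{Nak04} yields $K_Y\sim_{\mathbb{Q}}D$ with $D\ge 0$. One then checks $\varphi^*D$ is $\pi$-exceptional (its pushforward is effective and numerically trivial on the $\mathbb{Q}$-factorial $X$, hence zero), so $\det\mathcal{Q}\sim_{\mathbb{Q}}-\pi_*\varphi^*K_Y\sim_{\mathbb{Q}}0$. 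You correctly flag that abundance input is required here, but you apply it to the wrong object; the passage through $\nu$ on $\widetilde{X}$ is the missing step.
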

\begin{proof}
(1). 
We first apply Proposition \ref{prop-almostnef-c2-locallyfree} to show that $\mathcal{Q}$ is locally free. 
It is sufficient for this purpose to check that $c_{1}(\mathcal{Q})=0$ since positively curved sheaves are always pseudo-effective. 
Since the sheaf morphism $\mathcal{T}_X \to \mathcal{Q}$ is generically surjective, the sheaf $\mathcal{Q}$ is positively curved. 
In particular, the first Chern class $c_{1}(\mathcal{Q})$ is pseudo-effective.  
Meanwhile, since the canonical divisor $K_Y$ is a pseudo-effective $\mathbb{Q}$-Cartier divisor by assumption, 
we can show that $-c_{1}(\mathcal{Q})$ is pseudo-effective. 
Indeed, let $\pi_* \varphi^*  K_Y$ denote the push-forward of the $\mathbb{Q}$-Cartier divisor $\varphi^*  K_Y$ as a Weil divisor, and 
take a positive integer $m \in \mathbb{Z}_{+}$ such that both $m (\pi_* \varphi^*  K_Y)$ and $  \varphi^*  (mK_Y)$ are Cartier 
by the $\mathbb{Q}$-factoriality of $X$. 
Note that the associated invertible sheaf $\mathcal{O}_X (m \pi_* \varphi^*  K_Y) $ might be different from 
the direct image sheaf $\pi_* \mathcal{O}_{\widetilde X}(  \varphi^* m K_Y)$, 
but it coincides with the reflexive hull of $\pi_* \mathcal{O}_{\widetilde X}(  \varphi^*  mK_Y)$. 
Set 
$$X_{1}\coloneqq X \setminus (X_{\phi} \cup X_{\sing} \cup \pi(\varphi^{-1}(Y_{\sing}))),$$ 
where $X_{\phi}$ be the minimal Zariski closed subset in $X$ 
where the restriction $\phi|_{X \setminus X_{\phi}}\colon  X \setminus X_{\phi} \to Y $ is an everywhere-defined morphism. 
Note that $X_{1}$ is a big Zariski open set in $X$ by the assumption $\codim \pi(\varphi^{-1}(Y_{\sing})) \geq 2$. 
Since $$\mathcal{Q}=\pi_* \varphi^* \mathcal{T}_Y=(\phi|_{X_1})^*\mathcal{T}_Y$$ holds on $X_1$, 
we have $\det \mathcal{Q}=\mathcal{O}_{X_{1}}(-\pi_* \varphi^*K_{Y})$ on $X_{1}$. 
This indicates that 
$$
(\det \mathcal{Q})^{[\otimes -m]} = 
\mathcal{O}_X (m \pi_* \varphi^*  K_Y) = \big( \pi_* \mathcal{O}_{\widetilde X}(\varphi^* m   K_Y) \big)^{\vee\vee}
$$ 
by $\codim (X \setminus X_1) \geq 2$. 
Since $\varphi^*  K_Y$ is pseudo-effective as a $\mathbb{Q}$-Cartier divisor, 
so is the right-hand side, which indicates that the first Chern class $-c_1(\mathcal{Q})$ is also pseudo-effective. 
Thus, we can conclude  that $c_{1}(\mathcal{Q})=0$. 
Since $X$ is maximally quasi-\'etale by assumption, 
the sheaf $\mathcal{Q}$ is a flat locally free sheaf on $X$ by Proposition \ref{prop-almostnef-c2-locallyfree}. 
The metric $h_{q}$ on $\mathcal{Q}$ induced by a positively curved singular Hermitian metric on $\mathcal{T}_{X}$ is also positively curved. 
By Proposition \ref{prop-shm-hermflat}, we can conclude that $(\mathcal{Q}, h_{q})$ is Hermitian flat. 
\smallskip

(2). 
Lemma \ref{lem-nonvanish} shows that the sequence \eqref{eq-1} is an exact sequence as a bundle morphism on $X_\reg$  
by noting that $\mathcal{T}_{X}$ is locally free on $X_{\reg}$. 
Hence, by applying Corollary \ref{cor-shm-split}, we obtain the splitting $\mathcal{T}_X \cong \mathcal{S} \oplus \mathcal{Q}$.  
\smallskip

(3). 
The subsheaf $\mathcal{S} \subseteq \mathcal{T}_X$ is a saturated subsheaf since $\mathcal{Q}$ is torsion-free. 
By construction, the restriction $\mathcal{S}|_{X_1}$  coincides with the foliation induced by the fibration $\phi|_{X_{1}}\colon  X_1 \to Y$. 
Hence, the subsheaf $\mathcal{S}$ is an algebraically integrable foliation on $X$. 
By (2), the exact sequence \eqref{eq-1} splits on $X$. 
Hence, the foliation $\mathcal{S}$ is a weakly regular foliation.

\smallskip

(4). 
The difference $\varphi^*  (mK_Y) - \pi^* (m \pi_* \varphi^*  K_Y)$ is a $\pi$-exceptional divisor. 
The proof of (1) shows that $\pi_* \varphi^*  K_Y$ is numerically trivial by $c_{1}(\mathcal{Q})=0$. 
Hence, the divisor $\varphi^*  (mK_Y)$ can be written as the sum of a numerically trivial divisor and 
a (not necessarily effective) $\pi$-exceptional divisor, which implies that the numerical dimension \(\nu(\varphi^*(mK_Y))\) is no more than zero 
(cf.~\cite[Chapter V, Definition 2.5]{Nak04}).
On the other hand, since \(mK_Y\) is pseudo-effective, we have \(\nu(\varphi^*(mK_Y))\ge 0\) (cf.~\cite[Chapter V, Corollary 1.4]{Nak04}).
In particular, we obtain \(\nu(K_Y)=0\), and then it follows from \cite[Chapter V, Corollary 4.9]{Nak04} that  \(K_Y\) is abundant 
(i.e., there exists an effective divisor $D$ such that $K_Y \sim_\mathbb{Q} D$).
Since $\varphi^*D$ is $\pi$-exceptional, 
we can conclude that $\det \mathcal{Q} \sim_\mathbb{Q} \pi_{*} \varphi^*K_{Y} \sim_\mathbb{Q} 0$. 
\end{proof}

By Proposition \ref{prop-wreg} (3) and \cite[Theorem 17]{DGP20}, 
we obtain an equidimensional fibration $\phi \colon  X \to Y$ onto a normal projective variety $Y$, 
for which we use the same notation as the MRC fibration we took at the beginning of the proof.

We now prove that $Y$  is a quasi-\'etale quotient of an abelian variety. 
The canonical divisor $K_{Y}$ is $\mathbb{Q}$-Cartier and $Y$ has klt singularities by Lemma \ref{lem-qfact}. 
The fibration $\phi \colon  X \to Y$ is still an MRC fibration of $X$. 
Indeed, by construction,  the fibration $\phi \colon  X \to Y$ induced by $\mathcal{S}$ is birational 
to the MRC fibration we took at the beginning, which indicates that a general fiber of $\phi \colon  X \to Y$ is rationally connected. 
Furthermore, the variety $Y$ is non-uniruled, which indicates that $K_{Y}$ is pseudo-effective. 
The above argument shows that $\phi \colon  X \to Y$ satisfies the assumption of Proposition \ref{prop-wreg} by Remark \ref{rem-wreg} (2). 

Let us consider the exact sequence \eqref{eq-1} for this equidimensional fibration $\phi \colon  X \to Y$. 
Then, the sheaf $\mathcal{Q}$ is Hermitian flat by Proposition \ref{prop-wreg} (1). 
Since $\mathcal{Q}=\phi^* \mathcal{T}_Y$ holds on $\phi^{-1}(Y_{\reg})$, 
the tangent sheaf $\mathcal{T}_{Y}$ on $Y_{\reg}$ admits a Hermitian flat metric, 
which can be extended on $\mathcal{T}_{Y}$ by $\codim Y_{\sing} \geq 2$. 
In particular, we can conclude that $\mathcal{T}_Y$ is positively curved and $c_{1}(\mathcal{T}_{Y})=0$ holds. 
Hence, the base variety $Y$ is a quasi-\'etale quotient of an abelian variety by \cite[Theorem 1.2]{Gac20} 
(see also \cite[Proposition 3.3]{Mat22}). 

\smallskip

We finally complete the proof of Theorem \ref{thm-positively-curved}. 
Let $X$ be a projective klt variety with positively curved tangent sheaf. 
Take a maximally quasi-\'etale cover $\nu \colon  X' \to X$  of $X$ by \cite{GKP16} and 
a $\mathbb{Q}$-factorization $ X'_{\qt} \to X'$ that is a small birational morphism by \cite[Corollary 1.37]{Kol13}. 
We first check that $X'_{\qt} $ satisfies the assumption of the first half 
(i.e.,\,$X'_{\qt} $ is $\mathbb{Q}$-factorial and maximally quasi-\'etale, and $\mathcal{T}_{X'_{\qt} }$ is positively curved). 
The variety $X'_{\qt}$ is  maximally quasi-\'etale by \cite[Theorem 2.12]{MW21}. 
The tangent sheaves $\mathcal{T}_{X'}$ and $\mathcal{T}_{X'_{\qt}}$ are also positively curved. 
Indeed, a positively curved singular Hermitian metric $h$ on $\mathcal{T}_{X}$ induces 
the metric $\nu^{*}h$ on $\nu^{*}(\mathcal{T}_{X_{\reg}})=\mathcal{T}_{X'} |_{\nu^{-1}(X_{\reg})}$, 
which can be extended to the positively curved metric on $\mathcal{T}_{X'}$ by noting that $\codim \nu^{-1}(X_{\sing}) \geq 2$. 
The metric $\nu^{*}h$ induces  the positively curved metric on a big Zariski open set of $X'_{\qt}$
since $ X'_{\qt} \to X$  is a small birational morphism, and thus it can also be extended to a positively curved metric on \(\mathcal{T}_{X'_{\qt}}\).

By applying the results of the first half, 
we obtain an equidimensional MRC fibration $X'_{\qt} \to Y$ 
and a finite quasi-\'etale cover $\tau \colon A \to Y$  with an abelian variety $A$. 
Let $Z$ be the normalization of the fiber product $A \times_{Y} X'_{\qt}$ 
(noting that $A \times_{Y} X'_{\qt}$ is irreducible since $X'_{\qt} \to Y$ is equidimensional) and 
let $Z \to W \to X'$ be the Stein factorization of $Z \to X'$ (see the diagram on the left below). 

We will show that $W $ admits a locally constant MRC fibration onto an abelian variety. 
Since $X'_{\qt} \to Y$ is an equidimensional fibration and 
$A \to Y$ is a finite quasi-\'etale cover, 
the induced cover  $\mu \colon Z \to X'_{\qt}$ (and thus $W \to X'$) is also a finite quasi-\'etale cover. 
The cover $W \to X'$ is actually a finite \'etale cover since $X'$ is maximally quasi-\'etale. 
Since $Z \to W$ is birational by construction,  
there exists a dominant rational map $W \dashrightarrow A$, 
which actually determines the everywhere-defined fibration by \cite[Corollary 1.7]{HM07}, 
by noting that $W$ has klt singularities and $A$ contains no rational curves 
(see the diagram on the right below). 
$$
\xymatrix{
&W \ar@/^18pt/[dr] & \\
Z \ar[r]^{\mu}\ar[d]_{} \ar@/^18pt/[ur]& X'_{\qt}\ar[d]_{\phi}  \ar[r]& X'\\
A \ar[r]_{\tau} &Y, &
}
\qquad  \qquad  
\xymatrix{
&  & \\
\widetilde  W   \ar[dr]_\varphi \ar[r]^{\alpha} & W \ar[d]^{\psi}  \ar[r]& X'   \\\
 & A.&
}
$$

The tangent sheaf $\mathcal{T}_{W}$ is positively curved since so is $\mathcal{T}_{X'}$ (cf.~Lemma \ref{lem-p-curved}). 
Hence, by  Proposition \ref{prop-wreg} (2), 
we obtain the splitting $\mathcal{T}_{W}\cong  \mathcal{S} \oplus \psi ^{*} \mathcal{T}_{A}$. 
Strictly speaking, in Proposition \ref{prop-wreg}, 
we  assumed that $X$ (which is $W$ in the present situation) is $\mathbb{Q}$-factorial and maximally quasi-\'etale, 
but this assumption was used only to show that $\mathcal{Q}$ is locally free. 
Since  $\mathcal{Q}=\phi^{*} \mathcal{T}_{Y}$ is locally free in the present situation, 
we can obtain the desired splitting by the same argument as in Proposition \ref{prop-wreg} without using this assumption (cf.~Corollary \ref{cor-shm-split}). 

Let $\alpha \colon  \widetilde  W \to W$ be a functorial resolution of $W$. 
Then, by \cite[Lemma 5.10]{Dru18}, 
we obtain the splitting 
$$\mathcal{T}_{\widetilde  W}= \mathcal{T}_{\widetilde  W/A} \oplus (\psi\circ \alpha )^{*} \mathcal{T}_{A}.$$ 
Thus, the fibration $\varphi \colon \widetilde{W} \to A$ is a fiber bundle 
by Ehresmann's theorem (see also \cite[Lemma 3.19]{Hor07}), 
and hence a locally constant fibration by \cite[Lemma 4.1]{Mul}.
We now check that $W \to A$ is a locally constant fibration. 
Lemma \ref{lem-flat-reduced} shows that $W \to A$ is an equidimensional fibration, and thus flat by the miracle flatness. 
Let $L$ be an ample Cartier divisor on $W$. 
By \cite[Lemma 2.4]{MW21}, there is a $\mathbb{Q}$-line bundle $L_{A}$ on $A$
such that 
$$\varphi_{*} (m (\alpha^{*}L- (\psi\circ \alpha )^{*}L_{A}))$$ is a flat vector bundle 
for a divisible integer $m \in \mathbb{Z}_{+}$. 
Then, the direct image sheaf $\psi_{*} (m (L- \psi^{*}L_{A}))$ is flat and $(L- \psi^{*}L_{A})$ is $\psi$-ample. 
Thus, by \cite[Proposition 2.5]{MW21}, we conclude that $W \to A$ is a  locally constant fibration. 
\end{proof}

\section{Projective klt varieties with  almost nef tangent sheaf}\label{Sec-5}

This section is devoted to the proof of Theorem \ref{thm-almost-nef}. 
A key point of the proof is that the almost nefness of tangent sheaves is preserved by taking functorial resolutions,  
which enables us to deduce the desired conclusion from a structure theorem in the smooth case. 

\begin{proof}[Proof of Theorem \ref{thm-almost-nef}]

Let $X$ be a projective klt variety with almost nef tangent sheaf. 
Take a functorial resolution $\pi \colon  \widetilde{X} \to X$.  
Since there exists a generically surjective morphism $\pi^{*}\mathcal{T}_X \to \mathcal{T}_{\widetilde{X}}$ by Lemma \ref{lem-functorial-resolution-tangent}, 
the tangent bundle $\mathcal{T}_{\widetilde{X}}$ is almost nef by Lemma \ref{lem-elementary-lem-almostnef} (2), (4). 
Therefore, by applying \cite{Iwa22} (cf.\,\cite{HIM22}), we obtain a fibration $\varphi \colon  \widetilde{X} \to Y$ 
satisfying the following properties:
\begin{enumerate}
\item[$(1)$] $\varphi \colon  \widetilde{X} \to Y$ is a smooth MRC fibration onto a smooth projective variety $Y$. 
In particular, the standard sequence of tangent bundles is exact: 
$$
0 \to \mathcal{T}_{\widetilde {X}/Y} \to \mathcal{T}_{\widetilde{X}} \to \varphi^{*} \mathcal{T}_{Y} \to 0.
$$ 
\item[$(2)$] The base variety $Y$ is an \'etale quotient of an abelian variety. 
\end{enumerate}
For the reader's convenience, following the strategy in Section \ref{Sec-4}, we briefly explain how to obtain the fibration $\varphi \colon  \widetilde{X} \to Y$. 
Let $\widetilde{X} \dashrightarrow Y'$ be a (not necessarily holomorphic) MRC fibration to a smooth projective variety $Y'$. 
Let us consider the exact sequence \eqref{eq-1} by noting that $\mathcal{T}_{\widetilde{X}}$ is locally free and almost nef. 
By applying Lemma \ref{lem-nonvanish} for the almost nef \(\mathcal{T}_{\widetilde{X}}\), 
we obtain an exact sequence of vector bundles 
$$0\to\mathcal{S}\to \mathcal{T}_{\widetilde{X}}\to\mathcal{Q}\to 0.$$ 
The subbundle $\mathcal{S} \subseteq {T}_{\widetilde{X}}$ is a foliation whose general leaf is rationally connected 
by the same argument as in Proposition \ref{prop-wreg} (3). 
Thus, this foliation induces the smooth MRC fibration $\varphi \colon \widetilde{X} \to Y$ onto a smooth projective variety $Y$ (possibly different from \(Y'\))  by \cite[Theorem 3.17]{Hor07}. 
Meanwhile, the tangent bundle $\mathcal{T}_{Y}$ is almost nef thanks to the surjection $\mathcal{T}_{\widetilde{X}}\to\varphi^{*}\mathcal{T}_{Y}$ 
and $K_{Y}$ is pseudo-effective by \cite{BDPP13} since the smooth variety $Y$ is non-uniruled. 
Hence, the tangent bundle $\mathcal{T}_{Y}$ is  numerically flat  by Proposition \ref{prop-almostnef-c2-locallyfree},  
which indicates that $Y$ is an \'etale quotient of an abelian variety. 

Let us return to the proof of Theorem \ref{thm-almost-nef}. 
Since  $Y$ has no rational curves, the dominant rational map $X \dashrightarrow Y$  induced  by $\varphi \colon  \widetilde{X} \to Y$ 
actually determines an everywhere-defined fibration by \cite[Corollary 1.6]{HM07} and the rigidity lemma, with the following commutative diagram$:$ 
\begin{equation}\label{eq-2}
\xymatrix@C=40pt@R=30pt{
\widetilde X  \ar[rr]^\pi \ar[rd]_{\ \ \ \varphi}  &  & X \ar[ld]^{\phi \ \ \ }\\ 
&   Y. &  \\   
}
\end{equation}

Compared to the smooth case, the proof of the following claim requires non-trivial arguments arising from singularities, 
which are solved by properties of the almost nefness of $\mathcal{T}_{X}$. 

\begin{claim}\label{claim-almost-nef-fibre}
A very general fiber of $\phi \colon  X \to Y$ has almost nef tangent sheaf. 
\end{claim}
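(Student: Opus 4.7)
The plan is to propagate the almost nefness of $\mathcal{T}_{\widetilde{X}}$ down to a very general fiber $F$ of $\phi$ by passing through the smooth fiber $\widetilde{F}\coloneqq\varphi^{-1}(y)$ of $\varphi$ and the induced resolution $\pi|_{\widetilde{F}}\colon \widetilde{F}\to F$.

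For a very general $y\in Y$, I would first record the relevant generic properties: $\widetilde{F}$ is smooth (since $\varphi$ is a smooth fibration), $F=\phi^{-1}(y)$ is a normal irreducible variety (by generic flatness of $\phi$ together with the normality of $X$), and $\pi|_{\widetilde{F}}\colon \widetilde{F}\to F$ is a resolution of singularities, which can be identified with the functorial resolution of $F$ via the compatibility of functorial resolutions with smooth base change. Since $\mathbb{S}(\mathcal{T}_{\widetilde{X}})$ is a countable union of proper subvarieties of $\widetilde{X}$ and $\widetilde{F}$ is very general, the restriction $\mathcal{T}_{\widetilde{X}}|_{\widetilde{F}}$ is almost nef on $\widetilde{F}$.

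Next I would deduce that $\mathcal{T}_{\widetilde{F}}$ is almost nef. The smooth tangent sequence from property (1) restricts on $\widetilde{F}$ to an exact sequence of locally free sheaves
\[
0 \longrightarrow \mathcal{T}_{\widetilde{F}} \longrightarrow \mathcal{T}_{\widetilde{X}}|_{\widetilde{F}} \longrightarrow \mathcal{O}_{\widetilde{F}}^{\oplus \dim Y} \longrightarrow 0,
\]
whose quotient is trivial since $\widetilde{F}$ maps to a point in $Y$. For any finite morphism $\nu\colon C\to\widetilde{F}$ from a smooth curve with $\nu(C)\not\subseteq\mathbb{S}(\mathcal{T}_{\widetilde{X}})$, dualizing the pullback yields the exact sequence
\[
0\longrightarrow\mathcal{O}_C^{\oplus\dim Y}\longrightarrow\nu^*(\mathcal{T}_{\widetilde{X}}|_{\widetilde{F}})^\vee\longrightarrow\nu^*\mathcal{T}_{\widetilde{F}}^\vee\longrightarrow 0.
\]
Any subsheaf $\mathcal{F}\subseteq\nu^*\mathcal{T}_{\widetilde{F}}^\vee$ lifts to a subsheaf $\widetilde{\mathcal{F}}\subseteq\nu^*(\mathcal{T}_{\widetilde{X}}|_{\widetilde{F}})^\vee$ with $\deg\widetilde{\mathcal{F}}=\deg\mathcal{F}$; the nefness of $\nu^*\mathcal{T}_{\widetilde{X}}|_{\widetilde{F}}$ forces $\deg\widetilde{\mathcal{F}}\le 0$, so every subsheaf of $\nu^*\mathcal{T}_{\widetilde{F}}^\vee$ has non-positive degree. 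Therefore $\nu^*\mathcal{T}_{\widetilde{F}}$ is nef, and Lemma \ref{lem-elementary-lem-almostnef}(1) yields that $\mathcal{T}_{\widetilde{F}}$ is almost nef.

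Finally, to descend from $\widetilde{F}$ to $F$, Lemma \ref{lem-functorial-resolution-tangent} applied to the functorial resolution $\pi|_{\widetilde{F}}$ furnishes a generically surjective morphism of rank $\dim F$ from $(\pi|_{\widetilde{F}})^*\mathcal{T}_F$ onto $\mathcal{T}_{\widetilde{F}}$, so that the almost nefness of the locally free target together with Lemma \ref{lem-elementary-lem-almostnef}(7)--(8) (after quotienting out torsion) transfers to $(\pi|_{\widetilde{F}})^*\mathcal{T}_F$, and Lemma \ref{lem-elementary-lem-almostnef}(2) then gives the desired almost nefness of $\mathcal{T}_F$. The main obstacle is that the trace morphism in Lemma \ref{lem-functorial-resolution-tangent} is only generically surjective, whereas Lemma \ref{lem-elementary-lem-almostnef}(7) is formulated for genuine surjections; this is circumvented by working with $\mathcal{T}_{\widetilde{F}}(-\log E_F)$ in place of $\mathcal{T}_{\widetilde{F}}$ (where $E_F$ is the exceptional divisor of $\pi|_{\widetilde{F}}$), for which the trace morphism $(\pi|_{\widetilde{F}})^*\mathcal{T}_F\to\mathcal{T}_{\widetilde{F}}(-\log E_F)$ is a genuine surjection by the reflexivity of $(\pi|_{\widetilde{F}})_*\mathcal{T}_{\widetilde{F}}(-\log E_F)=\mathcal{T}_F$ from \cite{GKK10}.
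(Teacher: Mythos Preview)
Your first two steps are fine: restricting the smooth tangent sequence to a very general fiber $\widetilde{F}$ and using the degree argument on curves correctly shows that $\mathcal{T}_{\widetilde{F}}$ is almost nef. The gap is entirely in the descent from $\widetilde{F}$ to $F$.

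The crucial error is the assertion that the trace morphism $(\pi|_{\widetilde{F}})^*\mathcal{T}_F\to\mathcal{T}_{\widetilde{F}}(-\log E_F)$ is surjective ``by the reflexivity of $(\pi|_{\widetilde{F}})_*\mathcal{T}_{\widetilde{F}}(-\log E_F)=\mathcal{T}_F$''. Reflexivity of a pushforward $\pi_*\mathcal{G}$ says nothing about surjectivity of the counit $\pi^*\pi_*\mathcal{G}\to\mathcal{G}$; the latter is equivalent to $\mathcal{G}$ being relatively globally generated, which is a genuinely different and non-automatic condition. So Lemma \ref{lem-elementary-lem-almostnef}(7) cannot be invoked here. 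There are two further issues in the same step: you have only proved that $\mathcal{T}_{\widetilde{F}}$ is almost nef, not that $\mathcal{T}_{\widetilde{F}}(-\log E_F)$ is (subsheaves of almost nef sheaves need not be almost nef); and the claim that $\pi|_{\widetilde{F}}$ is the \emph{functorial} resolution of $F$ is not justified, since compatibility with smooth morphisms applies to the base change along $\widetilde{F}\hookrightarrow\widetilde{X}$ only if that inclusion is smooth, which is not what is being used. Likewise, Lemma \ref{lem-elementary-lem-almostnef}(8) would require $(\pi|_{\widetilde{F}})^*\mathcal{T}_F/\Tor$ to be locally free, which you have not shown.

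The paper avoids this descent problem altogether by working directly on $F$ with the almost nef sheaf $\mathcal{T}_X|_F$ rather than going through $\widetilde{F}$. After identifying $(\pi|_{\widetilde{F}})_*(\mathcal{T}_{\widetilde{X}}|_{\widetilde{F}})^{\vee\vee}=\mathcal{T}_X|_F$ and $(\pi|_{\widetilde{F}})_*\mathcal{T}_{\widetilde{F}}^{\vee\vee}=\mathcal{T}_F$, one gets a left exact sequence $0\to\mathcal{T}_F\to\mathcal{T}_X|_F\to\mathcal{O}_F^{\oplus k}$. The key input, which your argument never uses, is Lemma \ref{lem-elementary-lem-almostnef}(6): any nonzero morphism from an almost nef sheaf to $\mathcal{O}$ is surjective. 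This forces $\mathcal{T}_X|_F\to\mathcal{O}_F^{\oplus k}$ to be surjective, and then the wedge-product trick $\bigwedge^{[k+1]}\mathcal{T}_X|_F\to\mathcal{T}_F$ together with Lemma \ref{lem-elementary-lem-almostnef}(3),(4) yields the almost nefness of $\mathcal{T}_F$.
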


\begin{proof}[Proof of Claim \ref{claim-almost-nef-fibre}]
Let $F$ be a very general fiber of $\phi \colon  X \to Y$, and let   $\widetilde{F}$ be  the fiber of $\varphi \colon  \widetilde{X} \to Y$ at the same point as $F$. 
Note that $\widetilde{F}=\pi^{-1}(F)$. 
Since $F$ is a very general fiber, 
the restriction $\pi|_{\widetilde{F}} \colon \widetilde{F} \to F$ is 
a resolution of singularities of $F$. 
Hence, we can take  a big Zariski open subset $F_{0} \subseteq F$ such that 
$$\pi|_{\widetilde{F}_{0}}\colon \widetilde{F}_{0}\coloneqq \pi^{-1}(F_{0}) \to F_{0}$$ is isomorphic. 
Restricting the standard sequence of tangent bundles associated with $\varphi \colon  \widetilde{X} \to Y$ to $\widetilde{F}$, 
we obtain the left exact sequence 
$$
0\to \mathcal{T}_{\widetilde{F}} \to \mathcal{T}_{\widetilde{X}}|_{\widetilde{F}}\to (\varphi^{*} \mathcal{T}_{Y})|_{\tilde{F}}\cong\mathcal{O}_{\widetilde{F}}^{\oplus k}\to 0, 
$$
where \(k=\dim F\). 
By the push-forward along $\pi|_{\widetilde{F}}$, we obtain the exact sequence of sheaves  
$$
0\to (\pi|_{\widetilde{F}})_*\mathcal{T}_{\widetilde{F}} \to (\pi|_{\widetilde{F}})_*( \mathcal{T}_{\widetilde{X}}|_{\widetilde{F}} )\to \mathcal{O}_{F}^{\oplus k}. 
$$
Note that the sheaf morphism $(\pi|_{\widetilde{F}})_*( \mathcal{T}_{\widetilde{X}}|_{\widetilde{F}} ) \to \mathcal{O}_{F}^{\oplus k}$ is generically surjective
since $\pi|_{\widetilde{F}}\colon  \widetilde{F} \to F$ is isomorphic over $F_{0}$. 
Since $F$ is a very general fiber, the restriction $\mathcal{T}_X|_F$ of the tangent sheaf $\mathcal{T}_{X}$
is reflexive. 
Furthermore, we have 
$$(\pi|_{\widetilde{F}})_*( \mathcal{T}_{\widetilde{X}}|_{\widetilde{F}} )=\mathcal{T}_{X}|_{F}
\text{ and } (\pi|_{\widetilde{F}})_*\mathcal{T}_{\widetilde{F}}=\mathcal{T}_F \text{ on } F_{0}. $$  
Thus, by reflexivity, we obtain 
$$
((\pi|_{\widetilde{F}})_*( \mathcal{T}_{\widetilde{X}}|_{\widetilde{F}} ))^{\vee\vee}=\mathcal{T}_X|_F \text{ and } ((\pi|_{\widetilde{F}})_*\mathcal{T}_{\widetilde{F}})^{\vee\vee}=\mathcal{T}_F \text{ on } X. 
$$
Then, by taking the double dual of the above exact sequence, we obtain the left exact sequence
\[
0\to \mathcal{T}_F\to \mathcal{T}_X|_F\to \mathcal{O}_{F}^{\oplus k}.
\]
The morphism $\mathcal{T}_X|_F\to \mathcal{O}_{F}^{\oplus k} \to \mathcal{O}_{F}$ induced by the projection 
$\mathcal{O}_{F}^{\oplus k} \to \mathcal{O}_{F}$ is generically surjective.
Since $F$ is a very general fiber, the restriction  \(\mathcal{T}_X|_F\) is almost nef. 
Hence,  the sheaf morphism $\mathcal{T}_X|_F\to \mathcal{O}_{F}^{\oplus k}$ is actually surjective by Lemma \ref{lem-elementary-lem-almostnef} (6).  
On the big Zariski open set \(F_0\), there exists the surjective morphism
$$(\bigwedge^{k+1}\mathcal{T}_X|_F)|_{F_0}\to \mathcal{T}_{F_0},$$ 
which determines the generically surjective morphism
$\bigwedge^{[k+1]}\mathcal{T}_X|_F\to \mathcal{T}_F$
on  $F$. 
Hence, the tangent sheaf  $\mathcal{T}_F$ is almost nef by Lemma \ref{lem-elementary-lem-almostnef} (4). 
\end{proof}

We are left to show Theorem \ref{thm-almost-nef} (1) and (3).
Both properties (1) and (3) directly follow from \eqref{eq-2} regardless of the almost nefness of $\mathcal{T}_{X}$. 
Therefore,   we summarize the proofs in an independent form at the end of this section (see Lemma \ref{lem-flat-reduced}). 
\end{proof}

\begin{lem}
\label{lem-flat-reduced}
Let $\phi \colon  X \to Y$ be a fibration from a projective klt variety $X$ onto a smooth projective variety $Y$.
Let $\pi \colon  \widetilde{X} \to X$ be a log resolution of $X$. 
Assume there exists a smooth fibration $\varphi \colon  \widetilde{X} \to Y$ such that $\varphi=\phi \circ \pi$ $($cf.\,\eqref{eq-2}$)$. 
Then, we have$:$
\begin{enumerate}
\item[$(1)$]  $ \phi \colon X \to Y$ is a flat fibration. 
\item[$(2)$] Any fiber of  $\phi \colon X \to Y$ is reduced and irreducible. 
\item[$(3)$] Every irreducible component of the singular locus of $X$ dominates over $Y$. 
\item[$(4)$]  Any fiber of \(\phi \colon X\to Y\) has only klt singularities.
\end{enumerate}

\end{lem}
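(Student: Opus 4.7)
The plan is to treat the four assertions in the order (1), (3), (2), (4), using the smoothness of $\varphi$ as the dominant input. For (1), since $\varphi$ is smooth every fiber $\widetilde{X}_y$ has dimension $\dim X - \dim Y$, so the surjection $\pi|_{\widetilde{X}_y}\colon \widetilde{X}_y\to X_y$ together with the upper semi-continuity of fiber dimension forces $\dim X_y = \dim X - \dim Y$ for every $y$; thus $\phi$ is equidimensional, and since $X$ is Cohen-Macaulay (klt implies rational, hence CM) and $Y$ is smooth, miracle flatness gives flatness of $\phi$.

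For (3), let $W$ be an irreducible component of $X_\sing$; since $X$ is normal, $\dim W \le \dim X - 2$. Because $X_\sing = \bigcup_i \pi(E_i)$ for the $\pi$-exceptional prime divisors $E_i$ of $\widetilde{X}$, the irreducibility of $W$ produces some such divisor $F = E_i$ with $\pi(F) = W$. If $V\coloneqq \varphi(F)\subsetneq Y$, then $F\subseteq \varphi^{-1}(V)$ combined with the equidimensionality of $\varphi$ gives $\dim F \le \dim V + (\dim X - \dim Y)$, so $\dim V \ge \dim Y - 1$ and $V$ is a divisor in $Y$. For a general $y\in V$ a fiber dimension count then forces $F\cap \widetilde{X}_y = \widetilde{X}_y$, whence $X_y = \pi(\widetilde{X}_y) \subseteq W$ and $\dim W \ge \dim V + \dim X_y = \dim X - 1$, a contradiction. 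Thus $\varphi(F) = Y$, i.e., $\phi(W) = Y$.

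For (2), I would invoke flat base change combined with the rational singularities of $X$. Since klt implies rational, $\pi_*\mathcal{O}_{\widetilde{X}} = \mathcal{O}_X$ and $R^i\pi_*\mathcal{O}_{\widetilde{X}} = 0$ for $i > 0$. These sheaves are $Y$-flat by (1), so they commute with base change along $\{y\}\hookrightarrow Y$, yielding
\[
R(\pi|_{\widetilde{X}_y})_*\mathcal{O}_{\widetilde{X}_y} \;=\; R\pi_*\mathcal{O}_{\widetilde{X}} \otimes_{\mathcal{O}_Y}^L \kappa(y) \;=\; \mathcal{O}_{X_y}.
\]
In particular $(\pi|_{\widetilde{X}_y})_*\mathcal{O}_{\widetilde{X}_y} = \mathcal{O}_{X_y}$. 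The left-hand side is a sheaf of reduced rings because $\widetilde{X}_y$ is smooth, so $X_y$ is reduced; and the equality together with the connectedness of $\widetilde{X}_y$ forces $\pi|_{\widetilde{X}_y}$ to have connected fibers, so $X_y$ is irreducible and $\pi|_{\widetilde{X}_y}$ is birational.

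Finally, for (4), the previous parts make $\pi|_{\widetilde{X}_y}\colon \widetilde{X}_y\to X_y$ a resolution of singularities of $X_y$. Adjunction along smooth $\varphi$ and flat $\phi$ with reduced fibers gives $K_{\widetilde{X}_y} = K_{\widetilde{X}/Y}|_{\widetilde{X}_y}$ and $K_{X_y} = K_{X/Y}|_{X_y}$; writing the klt discrepancy formula $K_{\widetilde{X}/Y} = \pi^* K_{X/Y} + \sum_i a_i E_i$ with $a_i > -1$ and restricting to $\widetilde{X}_y$ yields
\[
K_{\widetilde{X}_y} \;=\; (\pi|_{\widetilde{X}_y})^* K_{X_y} + \sum_i a_i\bigl(E_i|_{\widetilde{X}_y}\bigr).
\]
Part (3) guarantees that each $\pi(E_i)\cap X_y$ has codimension at least two in $X_y$, so the restricted components remain $(\pi|_{\widetilde{X}_y})$-exceptional and the bound $a_i > -1$ is preserved, giving klt of $X_y$. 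The main obstacle is the discrepancy bookkeeping in this last step: the exceptional divisors $E_i$ need not meet the smooth fibers of $\varphi$ transversely, so controlling the multiplicities appearing in $E_i|_{\widetilde{X}_y}$---or bypassing them via a Koll\'ar-type persistence theorem for klt singularities in flat families with integral fibers over a smooth base---is the crucial technical point.
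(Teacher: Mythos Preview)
Your treatment of (1) and (3) matches the paper's. Your argument for (2), however, takes a genuinely different route. The paper proves reducedness by contradiction via a ramified base change (its auxiliary Lemma~\ref{lem-ramification-on-curve}): assuming $X_a$ has multiplicity $m\ge 2$, a degree-$m$ cyclic cover of a disk through $a$ forces the normalized fiber product to acquire $m$ distinct components over $a$, contradicting the irreducibility of the smooth fiber $\widetilde{X}_a$. Your route---rational singularities $R\pi_*\mathcal{O}_{\widetilde X}=\mathcal{O}_X$ plus base change along the Cartesian square $\widetilde{X}_y=\widetilde{X}\times_X X_y$, which is Tor-independent because both $\varphi$ and $\phi$ are flat---gives $R(\pi|_{\widetilde X_y})_*\mathcal{O}_{\widetilde X_y}=\mathcal{O}_{X_y}$ directly. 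This yields reducedness, irreducibility, and in fact \emph{normality and rational singularities} of every fiber, strictly more than the paper extracts at this stage. In particular, the normality of $X_y$ needed in (4) comes for free, whereas the paper establishes it via a separate intersection-number argument comparing $(\pi^*H)^{\dim E_y}\cdot E_y$ for general and special $y$ to show each $E_i|_{\widetilde X_y}$ is $\pi|_{\widetilde X_y}$-exceptional.

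Two remarks on your (4). First, your appeal to (3) for the codimension bound is slightly imprecise: (3) only says components of $X_{\sing}$ dominate $Y$, which gives $\codim_{X_y}(\pi(E_i)\cap X_y)\ge 2$ for \emph{general} $y$. The bound for every $y$ follows instead from the observation that all fibers of $\pi|_{E_i}\colon E_i\to\pi(E_i)$ have dimension $\ge 1$ (since $\dim E_i>\dim\pi(E_i)$), hence $\dim\pi(E_i\cap\widetilde X_y)\le\dim X_y-2$. Second, the multiplicity concern you flag is real: knowing $a_i>-1$ on $\widetilde{X}$ does not by itself bound the discrepancies of $(X_y,0)$ unless one knows $\sum E_i|_{\widetilde{X}_y}$ is SNC, which need not hold for special $y$. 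The paper's concluding line ``This indicates that $F$ is klt'' passes over the same point without further justification; your observation that $X_y$ already has rational singularities is a natural ingredient toward closing this shared gap.
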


\begin{proof}

(1). By the miracle flatness, it is sufficient to check that $ \phi$ is an equidimensional fibration 
by noting \(X\) is Cohen-Macaulay and \(Y\) is smooth. 
Let $a$ (resp.\,$y$) be an arbitrary point (resp.\,a general point) in $Y$. 
Let $X_{a}$ (resp.\,$\widetilde{X}_{a}$) denote the fiber of $ \phi \colon X \to Y$ at $a \in Y$ 
(resp.\,the fiber of $\varphi \colon  \widetilde{X} \to Y$ at $a \in Y$). 
By Chevalley's theorem, we obtain 
\[\dim (\widetilde{X}_y)=\dim(X_y)\le \dim (\widetilde{X}_a)=\dim (\widetilde{X}_y).\]
Here, the equality on the right-hand side follows from the equidimensionality of $\varphi$. 
This indicates that $ \phi \colon X \to Y$ is a flat fibration.

\smallskip

(2). Since any fiber of $\varphi \colon  \widetilde{X} \to Y$ is irreducible, so is any fiber of $\phi \colon  X \to Y$. 
We now check that an arbitrary fiber \(X_a\) is reduced. 
For this purpose, we suppose that the fiber \(X_a\) is non-reduced for some point $a \in Y$. 
The conclusion is local in $Y$. 
By taking a complete intersection, we may assume $Y$ is a unit disk containing \(a\).
Let $m \geq 2$ be the multiplicity of the fiber $X_{a}$. 
Take a cover $\tau \colon  Y' \to Y$ ramified at $a' \in Y'$ such that $\tau(a')=a$ holds and the ramification index at \(a'\in Y'\) is $m$. 
Let $\overline{X}$ be the normalization of the fiber product $X \times_{Y}Y'$. 
Then, by Lemma \ref{lem-ramification-on-curve} below, 
the fiber $\overline{X}_{a'}$ of $\overline{X} \to Y'$ at $a'$ 
is a reducible variety with $m$ distinct components. 
Meanwhile, the fiber product $\widetilde{X}' \coloneqq  \widetilde{X} \times_{Y} Y'$ admits the morphism 
\(\widetilde{X}' \to \overline{X}\) by the universality of normalization. 
Then, we have the following commutative diagram$:$
\[
\xymatrix{
\widetilde{X}' \ar[r]\ar[d]&\overline{X}\ar[r]\ar[d]& Y'\ar[d]^{\tau} \\
\widetilde{X}\ar[r]^{\pi}&X\ar[r]^{\phi}&Y. 
}
\]
The fiber \(\widetilde{X}'_{a'}\) of $\widetilde{X}' \to Y'$ at $a' \in Y'$ is smooth and connected, 
but it is mapped onto $m$ distinct components of \(\overline{X}_{a'}\). 
This contradicts $m \geq 2$.

\smallskip
(3). We first show that  an arbitrary fiber \(X_a\)  of $\phi \colon X \to Y$ is not contained in the singular locus $X_\sing$ of \(X\). 
Suppose \(X_a \subseteq X_{\sing}\) holds for some $a \in Y$.
Let $Z$ be an irreducible component of $X_\sing$ containing \(X_a\) and 
let \(E\subseteq\widetilde{X}\) be a prime divisor dominating \(Z\). 
By Chevalley's theorem, any fiber of \(\pi|_E\) has dimension \(\ge 1\) 
by noting that \( \dim (X) -1=\dim(E)> \dim(Z)\) holds. 
Hence, by $\widetilde{X}_a = \pi^{-1}(X_{a})$, we obtain  
\[
\dim (X_a)=\dim (\widetilde{X}_a)\ge \dim (\pi|_E)^{-1}(X_a)\ge \dim (X_a)+1. 
\]
This is a contradiction. 

It remains to show that every irreducible component of the  \(\pi\)-exceptional locus dominates over \(Y\).
Suppose that there exists a \(\pi\)-exceptional prime divisor \(E\) such that $\varphi(E)$ is properly contained in \(Y\). 
Then, the image \(\varphi(E) \subseteq Y\) is an irreducible subvariety of codimension one. 
Since \(\varphi \colon  \widetilde{X} \to Y\) is smooth, the inverse image \(\varphi^{-1}(\varphi(E))\) is reduced and irreducible. 
Therefore, we obtain \(E=\varphi^{-1}(\varphi(E))\) by \(E\subseteq\varphi^{-1}(\varphi(E))\). 
Then, for a point \(a\in \varphi(E)\), the fiber \(X_a\) is contained in the singular locus of \(X\), 
which contradicts the first part of the proof of (3).

\smallskip
(4). Let $F$ denote an arbitrary fiber of $\phi \colon X \to Y$. 
We first show that \(F\) is normal.
Since \(\phi\) is flat by (1) and \(X\) is Cohen-Macaulay,  the fiber \(F\) is Cohen-Macaulay. 
Hence, we are left to show that \(F\) is smooth in codimension one.
For this purpose, it is sufficient to show that \(\text{codim}_F(\pi(E_i\cap\widetilde{F}))\ge 2\) for any component \(E_i\). 
Here \(\widetilde{F}\) denotes the inverse image \(\pi^{-1}(F)\). 
Note that $\pi|_{\widetilde{F}}\colon \widetilde{F} \to F$ is a resolution of singularities of $F$. 
Suppose to the contrary that  \(E_1|_{\widetilde{X}_y}\) is not contracted to a subvariety of codimension \(\ge 2\) for some point \(y\in Y\). 
In the following, we assume \(E=E_1\) with \(\dim(E)>\dim(\pi(E))\), and let \(E_y=E\cap \widetilde{X}_y\).

By assumption, we have \(\dim(E_y)=\dim(\pi(E_y))\) for some point \(y\).
Then, for an ample divisor \(H\) on \(X\), the pullback  \(\pi^*H|_{E_y}\) is a nef and big divisor.
However, for a general fiber \(E_s\), since \(\dim(E_s)>\dim(\pi(E_s))\), the pullback \(\pi^*H|_{E_s}\) is not big.
Then, the following inequality gives rise to a contradiction.
\[
0=(\pi^*H|_{E_s})^{\dim (E_s)}=(\pi^*H)^{\dim (E_s)}\cdot E_s=(\pi^*H)^{\dim (E_y)}\cdot E_y=(\pi^*H|_{E_y})^{\dim (E_y)}>0.
\]
Hence,  we see that \(\text{codim}_F(\pi(E_i\cap\widetilde{F}))\ge 2\) for any component \(E_i\), 
and thus the fiber \(F\) is smooth in codimension one. In particular, the fiber \(F\) is normal.

Since each fiber is normal and Cohen-Macaulay,  we have  \(K_{X/Y}|_F=K_F\) for any fiber \(F\) by \cite[Proposition (9)]{Kle80}. 
When we write 
$$K_{\widetilde{X}/Y}=\pi^*K_{X/Y}+\sum a_iE_i, 
$$ 
we have \(a_i>-1\) by the klt condition.
Restricting this equality to \(\widetilde{F}\), we have
\(K_{\widetilde{F}}=(\pi|_{\widetilde{F}})^*K_F+\sum a_iE_i|_{\widetilde{F}}\).
This indicates that \(F\) is klt.
\end{proof}

\begin{lem}
\label{lem-ramification-on-curve}
Let \(\phi \colon X\to B\) be a flat fibration onto  a unit disk \(B \subseteq \mathbb{C}\) 
such that the central fiber \(X_{b}\) is an irreducible and non-reduced variety with multiplicity \(m\).
Let \(\tau \colon  B'\to B\) be a finite cover ramified at a point \(b'\in B'\) 
such that $\tau(b')=b$ and the ramification index at \(b'\in B'\) is $m$. 
Let \(X'\coloneqq X\times_BB'\) be the base change and \(\overline{X'}\) the normalization of \(X'\).
Then, the fiber \(\overline{X'}_{b'}\) at $b' \in B'$ is  reducible with \(m\) distinct components.
\end{lem}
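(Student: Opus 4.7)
The plan is to combine a local analytic analysis near a general smooth point of $Y \coloneqq (X_{b})_{\mathrm{red}}$ with a global counting argument. Since the statement is local on $B$, I may assume $B$ is an analytic disk with coordinate $t$ vanishing at $b$, and that $B'$ is the disk with coordinate $s$, with $\tau$ given by $t = s^{m}$ and $b' = \{s = 0\}$. Because $X$ is normal and $Y$ is an irreducible Weil divisor of codimension one, $X$ is smooth at the generic point of $Y$. Hence near a general smooth point $p \in Y$ one can pick holomorphic coordinates $(z_{1}, \ldots, z_{n})$ on $X$ with $Y = \{z_{1} = 0\}$, and the hypothesis $X_{b} = mY$ as a Cartier divisor forces $t = u(z)\, z_{1}^{m}$ for some nonvanishing holomorphic unit $u$ on the polydisk.

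The key local step is to extract a holomorphic $m$-th root $v = u^{1/m}$ on the simply connected polydisk, which yields the factorization
\[
s^{m} - t \;=\; s^{m} - u\, z_{1}^{m} \;=\; \prod_{i=0}^{m-1}\bigl(s - \omega^{i} v\, z_{1}\bigr), \qquad \omega \coloneqq \exp(2\pi \sqrt{-1}/m).
\]
Locally $X'$ is therefore the union of $m$ smooth analytic hypersurfaces $H_{i} \coloneqq \{s = \omega^{i} v\, z_{1}\}$ meeting pairwise exactly along $\{z_{1} = s = 0\}$. The normalization $\overline{X'}$ thus locally splits as the disjoint union $H_{0} \sqcup \cdots \sqcup H_{m-1}$, each $H_{i}$ projecting analytically isomorphically onto the polydisk. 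The fiber of $H_{i}$ over $b'$ is cut out by $s = 0$ and, since $v$ is a unit, reduces to $\{z_{1} = 0\}$---a smooth analytic copy of an open in $Y$. So near $p$, the fiber $\overline{X'}_{b'}$ is a disjoint union of $m$ smooth pieces, each mapping isomorphically onto an open of $Y$.

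For the global conclusion, I would observe that $\overline{X'} \to X$ is finite of generic degree $m$ and that, by the local analysis, the induced morphism $\overline{X'}_{b'} \to Y$ is generically an \'etale cover of degree $m$. Since $Y$ is irreducible, the irreducible components of $\overline{X'}_{b'}$ correspond bijectively to the connected components of this generic \'etale cover, so there are at most $m$, with equality if and only if the cover is trivial. The main obstacle is to rule out any monodromy that would merge the $m$ local sheets into fewer global irreducible components. I expect to resolve this by invoking the $\mathbb{Z}/m$-Galois structure of $\tau \colon B' \to B$, which induces a fiberwise $\mathbb{Z}/m$-action on $\overline{X'}_{b'}$ whose orbits are exactly the local sheets, combined with the uniqueness of the Kummer-type factorization after passing to the index-one cover of $X$ along the $\mathbb{Q}$-Cartier divisor $Y$: on that cover $t$ becomes a global $m$-th power, canonically labelling the $m$ sheets and forcing $\overline{X'}_{b'}$ to have exactly $m$ distinct irreducible components.
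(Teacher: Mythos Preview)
Your local analysis (extracting an $m$-th root $v$ of the unit and factoring $s^m-t=\prod_i(s-\omega^i v z_1)$) is correct and is essentially the same computation the paper carries out. You are also right to flag the global monodromy issue; the paper's own argument stops at the local picture and simply asserts the global conclusion without addressing it.

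Unfortunately your proposed resolution does not work, and in fact the lemma is false as stated. Take a logarithmic transformation: let $\rho\colon\tilde E\to E$ be a connected cyclic \'etale cover of degree $m$ of an elliptic curve with deck transformation $\sigma$, let $B'$ be a disk with coordinate $s$, and set $X\coloneqq(\tilde E\times B')/\langle(\sigma,\omega)\rangle$ with $\omega=e^{2\pi i/m}$. Then $X$ is smooth, $\phi\colon X\to B=B'/\langle\omega\rangle$ is flat and proper with central fibre $mE$, yet the quotient map $\tilde E\times B'\to X$ and the map $\tilde E\times B'\to B'$ together exhibit $\tilde E\times B'$ as a finite birational cover of $X\times_BB'$, hence as its normalization $\overline{X'}$; the central fibre $\overline{X'}_{b'}=\tilde E$ is \emph{irreducible}. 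In this example $Y=E$ is Cartier, so your ``index-one cover'' is $X$ itself and $t$ is not a global $m$-th power; and the $\mathbb{Z}/m$-Galois action on $\overline{X'}_{b'}=\tilde E$ is the free translation by $\sigma$, which permutes the $m$ local sheets cyclically while leaving the single global component invariant. So neither of your two devices forces $m$ components.

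What rescues the application in Lemma~\ref{lem-flat-reduced} is the extra hypothesis available there: a resolution $\pi\colon\widetilde X\to X$ with $\widetilde X\to B$ smooth. Since $\pi$ is an isomorphism over $X_{\reg}$ and $X_{\sing}$ has codimension $\ge 2$, a general point of $(X_b)_{\mathrm{red}}$ lies in $X_{\reg}$, where $\phi$ then agrees with the smooth map $\varphi$; hence $X_b$ is reduced at that point, contradicting $m\ge 2$ directly. The base-change lemma is not needed.
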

\begin{proof}
Let \(n=\dim(X)\ge 2\).
For any smooth point \(x\in X_b\) of \(X\), we can locally embed \(X\) into \(\mathbb{C}^{n+1}\), in other words, around a small open neighborhood \(U_x\subseteq \mathbb{C}^{n+1}\) of \(x\), such that \(X\cap U_x\) is given by the zero locus of a polynomial in \(\mathbb{C}^{n+1}\). 
Fix a coordinate \(t\) on \(B\) and \((t,x_1,\cdots,x_n)\) on \(\mathbb{C}^{n+1}\). 
Denote by \(\tau_X \colon  X'\to X\) the induced ramified base change.
Then, locally around the small open subset \(\tau_X^{-1}(U_x)\subseteq\mathbb{C}^{n+1}\), the equation of \(X'\) is defined as
\[
s^m=f^m(x_1,\cdots,x_n),
\]
where \(s\) is the local parameter of \(B'\).
Here \(X'\cap \tau_X^{-1}(U_x)\) is given by the zero locus of the above equation \(s^m=f^m(x_1,\cdots,x_n)\) in \(\mathbb{C}^{n+1}\).
Then,  the variety \(X'\) is non-normal around \(X'_{b'}\) since locally there are \(m\) components (\(s-\zeta^if=0\) with \(\zeta^m=1\)) passing through the same fiber, and hence \(X'\) is not smooth in codimension one. 
After taking the normalization, we see that \(\overline{X'}_{b'}\) contains \(m\) distinct components.
\end{proof}

\section{Applications of the structure theorems}\label{Sec-6}

In this section, by applying Theorems \ref{thm-positively-curved} and \ref{thm-almost-nef}, 
we prove Theorem \ref{thm-MRC-Albanese-main}, Corollaries  \ref{cor-almost-nef}, \ref{cor-maximally-etale-main}.

\subsection{Proof of Theorem \ref{thm-MRC-Albanese-main}}\label{subsection-mrc-albanese}
This subsection is devoted to the proof of Theorem \ref{thm-MRC-Albanese-main} and Corollary \ref{cor-maximally-etale-main}. 
After reviewing the Albanese maps of projective klt varieties, we prove Lemma \ref{lem-agumented-irregularity-almostnef}. 
Let $X$ be a projective klt variety. 
The {\it irregularity}  $q(X)$ of $X$ is defined by $q(X) \coloneqq  \dim H^1(X, \mathcal{O}_X)$ and 
the {\it augmented irregularity} $\widehat{q}(X)$ is defined by 
$$
\widehat{q}(X) \coloneqq  \sup \{ q(\widehat{X})  \,|\, \text{$ \widehat{X} \to X$ is a finite quasi-\'etale cover} \}.
$$ 
For a resolution of singularities $\pi \colon  \widetilde{X} \to X$ of $X$, 
we consider the Albanese map $\widetilde{\alpha } \colon \widetilde{X} \to \Alb(\widetilde{X})$. 
Then,  by \cite[Corollary 1.6]{HM07} (cf.\,\cite[Lemma 8.1]{Kaw85}), 
we obtain the morphism $\alpha \colon  X \to \Alb(\widetilde{X}) $ 
with the following commutative diagram: 
 \[
 \xymatrix{
 \widetilde{X}\ar[r]^{\pi}\ar[dr]_{\widetilde{\alpha}}&X\ar[d]^{\alpha}\\
 &\Alb(\widetilde{X}).
 }
 \]
The Albanese map of $X$ is defined by $\alpha \colon  X \to \Alb(X)\coloneqq \Alb(\widetilde{X})$, 
which does not depend on the choice of resolutions of singularities (see \cite[Proposition 9.12]{Uen75}). 
Note that $\dim \Alb (X) = q(\widetilde{X}) = q(X)$ since $X$ has rational singularities.

\begin{lem}
\label{lem-agumented-irregularity-almostnef}
Let $X$ be a projective klt variety with pseudo-effective or almost nef tangent sheaf.
Then, the Albanese map $\alpha \colon  X \to \Alb(X)$ is surjective. 
In particular, the inequality $\widehat{q} (X) \le \dim X$ holds. 
Furthermore, there exists a maximally quasi-\'etale cover $\widehat{X} \to X$ with $q(\widehat{X})=\widehat{q}(X) $. 
 \end{lem}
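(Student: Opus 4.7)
The plan is to derive all three assertions from Lemma~\ref{lem-nonvanish}(1), with the surjectivity of $\alpha$ as the core point and the other two assertions following formally. For surjectivity of $\alpha\colon X\to\Alb(X)$, observe that $X$ being klt has rational singularities, so for a resolution $\pi\colon\widetilde X\to X$ one has $h^0(X,\Omega_X^{[1]})=h^0(\widetilde X,\Omega_{\widetilde X}^1)=q(\widetilde X)=q(X)$, and the universal property of the Albanese identifies the pullback $\alpha^*\Omega_{\Alb(X)}\cong\mathcal{O}_X^{\oplus q(X)}$ with the trivial sheaf spanned by a basis $\tau_1,\dots,\tau_{q(X)}$ of $H^0(X,\Omega_X^{[1]})$; moreover the natural morphism $\alpha^*\Omega_{\Alb(X)}\to\Omega_X^{[1]}$ is precisely the evaluation of these sections. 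Since $\Omega_X^{[1]}=\mathcal{T}_X^\vee$ and $\mathcal{T}_X$ is pseudo-effective or almost nef, Lemma~\ref{lem-nonvanish}(1) guarantees that every non-zero $\tau\in H^0(X,\Omega_X^{[1]})$ is nonvanishing on the big open set $X_{\mathcal{T}_X}\cap X_{\reg}$. Hence, for any point $x$ in this open set, the evaluation
\[
\textup{ev}_x\colon H^0(X,\Omega_X^{[1]})\longrightarrow \Omega_X^{[1]}\otimes\kappa(x)\cong\mathbb{C}^{\dim X}
\]
is injective, which simultaneously yields $q(X)\le\dim X$ and shows that $\alpha^*\Omega_{\Alb(X)}\to\Omega_X^{[1]}$ has generic rank $q(X)$. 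Dualising, $d\alpha\colon\mathcal{T}_X\to\alpha^*\mathcal{T}_{\Alb(X)}$ is generically surjective, so the image of $\alpha$ has dimension $q(X)=\dim\Alb(X)$, and by properness $\alpha$ is surjective.

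For the bound $\widehat q(X)\le\dim X$, I would apply the previous step to an arbitrary finite quasi-\'etale cover $\nu\colon Y\to X$. Since $Y$ is klt with $\mathcal{T}_Y=\nu^{[*]}\mathcal{T}_X$, and reflexive pullback along the finite surjective morphism $\nu$ preserves both pseudo-effectivity (directly from the definition by generic global generation) and almost nefness (Lemma~\ref{lem-elementary-lem-almostnef}(4)), the evaluation argument on $Y$ gives $q(Y)\le\dim Y=\dim X$; taking suprema yields $\widehat q(X)\le\dim X$.

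Finally, to realise $\widehat q(X)$ on a maximally quasi-\'etale cover, I would choose a quasi-\'etale cover $Y\to X$ with $q(Y)=\widehat q(X)$ (the supremum being attained since it is a non-negative integer bounded above by $\dim X$), and then apply \cite[Theorem 1.14]{GKP16} to $Y$ to obtain a finite quasi-\'etale cover $\widehat X\to Y$ with $\widehat X$ maximally quasi-\'etale. The composite $\widehat X\to X$ is quasi-\'etale, so $q(\widehat X)\le\widehat q(X)$, while the universal property of the Albanese produces a morphism $\Alb(\widehat X)\to\Alb(Y)$ whose image is an abelian subvariety containing the image of $\widehat X\to Y\to\Alb(Y)$; since the latter generates $\Alb(Y)$ as a group, this image must equal $\Alb(Y)$, forcing $q(\widehat X)\ge q(Y)=\widehat q(X)$. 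The main obstacle throughout is the very first step, namely converting the positivity of $\mathcal{T}_X$ into pointwise non-vanishing of its dual sections via Lemma~\ref{lem-nonvanish}; once this is available, everything else follows formally from the universal property of the Albanese and the behaviour of the irregularity under finite covers.
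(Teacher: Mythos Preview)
Your proof is correct and follows essentially the same approach as the paper: both use Lemma~\ref{lem-nonvanish} to show that nonzero global reflexive $1$-forms are nowhere vanishing on $X_{\reg}$, deduce surjectivity of the Albanese map (the paper by citing \cite[Proposition~3.9]{DPS94}, you by spelling out the evaluation argument), and then realise $\widehat q(X)$ on a maximally quasi-\'etale cover by first choosing a cover attaining the supremum and then passing to a maximally quasi-\'etale cover via \cite[Theorem~1.14]{GKP16}. Two minor remarks: since $\mathcal{T}_X$ is locally free on $X_{\reg}$ by definition, your open set $X_{\mathcal{T}_X}\cap X_{\reg}$ is just $X_{\reg}$; and your Albanese argument for $q(\widehat X)\ge q(Y)$ is fine but could be shortened to the observation that holomorphic $1$-forms pull back injectively along the generically \'etale dominant morphism $\widehat X\to Y$.
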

\begin{proof} 
By Lemma \ref{lem-nonvanish}, any non-zero section of $H^0(X, \Omega_{X}^{[1]})$ is nonvanishing on $X_{\reg}$.
Thus, the same argument as in \cite[Proposition 3.9]{DPS94} shows that 
$\alpha|_{X_{\reg}} \colon  X_{\reg} \to \Alb(X)$ is smooth. 
Hence, the Albanese map $\alpha$ is surjective.

For the last assertion, we take a quasi-\'etale cover $X' \to X$ with $\widehat{q}(X) = q(X')$ by the first assertion, 
and consider a maximally quasi-\'etale cover $\widehat{X} \to X'$. 
Then, we obtain $$\widehat{q}(X) \geq q(\widehat{X}) \geq q(X') = \widehat{q}(X),$$ finishing the proof. 
\end{proof}

\begin{thm}
\label{thm-MRC-Albanese-almostnef}
Let $X$ be a projective klt variety with positively curved  or almost nef tangent sheaf.
Then, there exist a maximally quasi-\'etale cover $\widehat{X} \to X$ 
and a fibration $\varphi \colon  \widehat{X}  \to A$ onto  an abelian variety $A$ 
satisfying the following properties$:$
\begin{enumerate}
\item[$(1)$] $\varphi \colon  \widehat{X}  \to A$ is an everywhere-defined MRC fibration of $X$.
\item[$(2)$] $\varphi \colon  \widehat{X}  \to A$ is  the Albanese map with $\dim A = \widehat{q}(X) = q(\widehat{X})$.
\item[$(3)$] Consider the Fujita decomposition of $\mathcal{T}_{\widehat{X}}$ $($see\,Theorem \ref{thm-fujita-decomposition-klt}$)$$:$ 
\begin{equation*}
0 \longrightarrow  \mathcal{F} \longrightarrow \mathcal{T}_{\widehat{X}} \longrightarrow \mathcal{Q} \longrightarrow 0. 
\end{equation*}
Then, the reflexive hull $\mathcal{Q}^{\vee\vee}$, which is the flat part of \(\mathcal{T}_{\widehat{X}}\), coincides with $\varphi^{*}\mathcal{T}_{A}$.  
In particular, the augmented irregularity $\widehat{q}(X)$ is equal to the rank of  the flat part of $\mathcal{T}_{\widehat{X}}$.
\item[$(4)$] The augmented irregularity $\widehat{q}(X) $ is equal to the rank of the maximal destabilizing subsheaf of $\Omega_{X}^{[1]}$ with respect to $H_{1} \cdots H_{n-1}$ for any ample Cartier divisors $H_{1}, \ldots, H_{n-1}$. 
\end{enumerate}
\end{thm}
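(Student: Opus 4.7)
The plan is to combine the structure theorems (Theorems \ref{thm-positively-curved} and \ref{thm-almost-nef}) with the Fujita decomposition (Theorem \ref{thm-fujita-decomposition-klt}), and then identify the resulting fibration with the Albanese morphism. First, I apply Theorem \ref{thm-positively-curved} or \ref{thm-almost-nef} to produce a finite cover $\widehat{X}\to X$ and a fibration $\widehat{X}\to Y$ onto an abelian variety or an \'etale quotient thereof; in the latter case I pull back along $A\to Y$ to reduce to $Y=A$. Using \cite[Theorem 1.14]{GKP16} and Lemma \ref{lem-agumented-irregularity-almostnef}, I further enlarge $\widehat{X}$ so that it becomes maximally quasi-\'etale with $q(\widehat{X})=\widehat{q}(X)$, while the rationally connected structure of the fibers of $\varphi\colon\widehat{X}\to A$ is preserved. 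Since rationally connected varieties have trivial Albanese, the fibers of $\varphi$ are contracted by every morphism from $\widehat{X}$ to an abelian variety; hence the Albanese $\alpha\colon\widehat{X}\to\mathrm{Alb}(\widehat{X})$ factors through $\varphi$, and $\varphi$ factors through $\alpha$ by universality. Dimension comparison then yields $\dim A=q(\widehat{X})=\widehat{q}(X)$, settling (1) and (2).

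For (3), the pullback $\varphi^{*}\Omega_{A}$ is a flat locally free subsheaf of $\Omega_{\widehat{X}}^{[1]}$ of rank $\dim A$, and by Theorem \ref{thm-fujita-decomposition-klt} together with Remark \ref{rem-maximal-flat-locally-free} the dual $\mathcal{Q}^{\vee}$ of the Fujita quotient is precisely the maximal flat locally free subsheaf of $\Omega_{\widehat{X}}^{[1]}$; hence $\varphi^{*}\Omega_{A}\subseteq\mathcal{Q}^{\vee}$. To reverse the inclusion, I restrict to a very general smooth fiber $F$: the bundle $\mathcal{Q}^{\vee}|_{F}$ is trivial, since $F$ is simply connected and $\mathcal{Q}^{\vee}$ is flat, while $H^{0}(F,\Omega_{F}^{[1]})=0$ as $F$ is rationally connected. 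Therefore the composition $\mathcal{Q}^{\vee}|_{F}\to\Omega_{\widehat{X}}^{[1]}|_{F}\to\Omega_{F}^{[1]}$ vanishes, and the relative cotangent sequence forces $\mathcal{Q}^{\vee}|_{F}\subseteq\varphi^{*}\Omega_{A}|_{F}$. Rank comparison gives $\mathrm{rk}\,\mathcal{Q}^{\vee}=\dim A$, and since both $\varphi^{*}\Omega_{A}$ and $\mathcal{Q}^{\vee}$ are saturated locally free subsheaves of $\Omega_{\widehat{X}}^{[1]}$ of the same rank, they coincide. Then (4) follows from (3) and Proposition \ref{prop-genericallyample-etale}, which preserves the rank of the maximal destabilizing subsheaf under finite quasi-\'etale Galois covers.

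For (5), I argue that any quasi-\'etale cover $\widehat{F}\to F$ of a fiber extends to a quasi-\'etale cover of $\widehat{X}$ on a Zariski neighborhood of $F$. In the positively curved case this is immediate from the local constancy of $\varphi$ in Theorem \ref{thm-positively-curved}. In the almost nef case I combine the flatness of $\varphi$, the reducedness and irreducibility of its fibers, and the behavior of $X_{\mathrm{sing}}$ over $A$ provided by Theorem \ref{thm-almost-nef}, together with a monodromy analysis over the smooth locus of $\varphi$, to propagate the cover from a neighborhood of $F$ to all of $\widehat{X}$. Since $\widehat{X}$ is maximally quasi-\'etale, the extended cover must be \'etale, and therefore so is $\widehat{F}\to F$; this establishes the maximally quasi-\'etale property of every fiber. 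A very general fiber $F$ is a smooth rationally connected klt variety, hence simply connected, so every finite \'etale cover is trivial; this yields $\widehat{q}(F)=q(F)=0$, and any finite quasi-\'etale cover of such an $F$ is $F$ itself, in particular rationally connected.

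The principal obstacle will be the extension of quasi-\'etale covers in item (5) for the almost nef case, since Theorem \ref{thm-almost-nef} does not supply local constancy of $\varphi$. Propagating covers of possibly singular fibers to global covers of $\widehat{X}$ requires a delicate monodromy argument that must intertwine the flat fiber structure, the prescribed behavior of $X_{\mathrm{sing}}$ over $A$, and the maximally quasi-\'etale property of $\widehat{X}$; this is the step where the proof must diverge most significantly from the smooth or locally constant setting.
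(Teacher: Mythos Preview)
Your approach to parts (1)--(4) is essentially the same as the paper's, with one recurring slip: you repeatedly assume that a very general fiber $F$ is smooth. In the almost nef case this is false---Theorem \ref{thm-almost-nef}(1) says every irreducible component of $X_{\sing}$ dominates $A$, so every fiber meets the singular locus. The paper handles this by invoking \cite[Theorem 1.1]{Tak03}, which gives simple connectedness for rationally connected klt varieties without smoothness, so your argument for (3) survives with this substitution. Your conclusion of (3) via ``saturated subsheaves of the same rank coincide'' is also a bit loose; the paper instead applies Lemma \ref{lem-nonvanish}(2) to show $\varphi^{*}\Omega_{A}\hookrightarrow\mathcal{Q}^{\vee}$ is a subbundle on $X_{\reg}$, hence an equality there, and then uses reflexivity.

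The genuine gap is in (5) for the almost nef case. Your proposed ``monodromy analysis over the smooth locus of $\varphi$'' does not have enough content: without some form of local triviality, there is no reason the fundamental group of $F_{\reg}$ should relate to $\pi_{1}(\widehat{X}_{\reg})$ via a fibration exact sequence, and no mechanism to propagate a quasi-\'etale cover of a single fiber to a neighborhood. The paper's solution is quite different from what you sketch. It passes to the functorial resolution $\pi\colon\widetilde{X}\to\widehat{X}$ with exceptional divisor $E$, and exploits that $\mathcal{T}_{\widetilde{X}}(-\log E)$ is almost nef (Lemma \ref{lem-functorial-resolution-tangent}). Applying Lemma \ref{lem-nonvanish} to the injection $\widetilde{\varphi}^{*}\Omega_{A}\to\Omega_{\widetilde{X}}^{1}(\log E)$ shows this is a bundle injection everywhere, which forces each stratum $E_{I}$ of $E$ to be smooth over $A$. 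Thom's first isotopy lemma then yields that $\widetilde{\varphi}|_{\widetilde{X}\setminus E}$ is a locally trivial $C^{\infty}$ fibration, producing the needed homotopy exact sequence for $\pi_{1}(\widehat{X}_{\reg})$. This logarithmic-stratification argument is the missing idea; the flatness and fiber properties of Theorem \ref{thm-almost-nef} alone are not enough to run an extension-of-covers argument.
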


\begin{proof}

\smallskip 
(1).  By Lemma \ref{lem-agumented-irregularity-almostnef}, 
we can take a maximally quasi-\'etale cover $\widehat{X} \to X$ with $\widehat{q}(X) = q(\widehat{X})$. 
By Lemma \ref{lem-elementary-lem-almostnef} (3), 
the tangent sheaf $\mathcal{T}_{\widetilde X}$, which is the reflexive pullback of $\mathcal{T}_{X}$, is almost nef. 
By Theorems \ref{thm-positively-curved} and \ref{thm-almost-nef}, after taking a further quasi-\'etale cover of $\widehat{X}$, 
we obtain an everywhere-defined MRC fibration $\widehat{X}  \to A$ onto an abelian variety.

\smallskip 
(2). Take the Albanese map $\alpha \colon  \widehat{X} \to \Alb(\widehat{X})$.
By the universality of the Albanese map, there is a Lie group morphism $\beta \colon  \Alb(\widehat{X}) \to A$ 
with the following diagram (see \cite[Definition 9.6]{Uen75}).
$$
 \xymatrix{
 \widehat{X} \ar[r]^{\varphi} \ar[dr]_{\alpha}& A \\
    & \Alb(\widehat{X}) \ar[u]_{\beta}. \\
 }
$$
Since $\varphi$ is surjective, so is $\beta$. 
Then we obtain  $\dim A \le \Alb(\widehat{X})$.
On the other hand, since any fiber $F$ of $\varphi \colon \widehat{X} \to A$ is rationally connected, the image $\alpha(F)$ is a point. 
This indicates that $\dim A \ge \Alb(\widehat{X})$, 
and hence $\dim A = \dim \Alb(\widehat{X})$. 
In particular, we see that $\beta \colon  \Alb(\widehat{X}) \to A$ is a finite \'etale cover.

Since  $\varphi^{-1}(a)$ is irreducible for any $a \in A$ and \(\alpha \colon \widehat{X} \to A\) has connected fibers, 
the fiber $\beta^{-1}(a)$ is a single point. 
This indicates that $\beta \colon \Alb(\widehat{X}) \to A$ is isomorphic and $\varphi \colon  \widehat{X}  \to \Alb(\widehat{X}) $ is the Albanese map.
Moreover, we obtain $\dim A = q(\widehat{X}) = \widehat{q}(X)$.

\smallskip 
(3). We prove that $\mathcal{Q}^{\vee} = \varphi^{*}\Omega_{A}$. 
For this purpose, we first check that  $\dim A  = \rk \mathcal{Q}$ holds.
The pullback $\varphi^{*}\Omega_{A} \subseteq \Omega_{\widehat{X} }^{[1]}$ 
is a flat vector bundle, and hence we obtain $\dim A \le \rk \mathcal{Q}$
since $\mathcal{Q}^{\vee}$ is the maximal flat locally free sheaf contained in $\Omega_{\widehat{X} }^{[1]}$ 
by Remark \ref{rem-maximal-flat-locally-free}. 
To check the converse inequality, for a general fiber $F$ of $\varphi$, we consider the following diagram:
\begin{equation*}
\label{eq-fujita-exact-sq}
\xymatrix@C=25pt@R=20pt{
0 \ar@{->}[r]&  \mathcal{F}|_{F} \ar@{->}[r] &\mathcal{T}_{\widehat{X}}|_{F}  \ar@{->}[r]\ar@{=}[d]&\mathcal{Q}|_{F}\ar@{->}[r] &0\\
0\ar@{->}[r] &\mathcal{T}_{F}  \ar@{->}[r] \ar@{-->}[rru]_{\,\,\, \gamma}&\mathcal{T}_{\widehat{X}}|_{F} \ar@{->}[r]& \mathcal{O}_{F}^{\oplus \dim A}.&
}
\end{equation*}
We claim that the induced morphism $\gamma \colon  \mathcal{T}_{F} \to \mathcal{Q}|_{F}$ 
is actually the zero map.
Indeed, we see that the general fiber is simply connected by \cite[Theorem 1.1]{Tak03}, by noting that
 $F$ is a rationally connected projective klt variety. 
This implies that the flat locally free sheaf $\mathcal{Q}|_{F}$ is a trivial vector bundle. 
On the other hand, we have $H^0(F, \Omega_{F}^{[1]}) =0$ due to rational connectedness of \(F\). 
Hence $\gamma $ is the zero map, and thus, we can obtain an injective morphism $\mathcal{T}_{F} \to \mathcal{F}|_{F}$. 
In particular, we obtain
$$
\dim \widehat{X} - \dim A = \rk \mathcal{T}_{F} \le \rk \mathcal{F}|_{F} = \dim \widehat{X} - \rk \mathcal{Q}.
$$
Thus we have $\dim A = \rk \mathcal{Q}$.

Since both $\varphi^{*}\Omega_{A}$ and $\mathcal{Q}^{\vee\vee}$ are almost nef locally free sheaves, by applying Lemma \ref{lem-nonvanish} (2) to the morphism $0 \to \varphi^{*}\Omega_{A} \to \mathcal{Q}^{\vee}$, the subsheaf $\varphi^{*}\Omega_{A}|_{X_{\reg}} $ is a subbundle of $\mathcal{Q}^{\vee}|_{X_{\reg}}$.
This implies $\mathcal{Q}^{\vee}$ coincides with $\varphi^{*}\Omega_{A}$ on the big open set $X_{\reg}$. Since both are reflexive, these two sheaves coincide.

\smallskip 
(4). This follows from $\dim A = \rk \mathcal{Q}$ and  Proposition \ref{prop-genericallyample-etale}.
\end{proof}

The following corollary is an immediate consequence of Theorem \ref{thm-MRC-Albanese-almostnef}, 
which provides a relation between the positivity of tangent sheaf and the augmented irregularity. 
We refer to Examples \ref{exa-Qabelian} and \ref{exa-Matsuzawa-Yoshikawa} for some practical calculation.

\begin{cor}\label{cor-maximally-etale-rc}
Let \(X\) be a projective klt variety with positively curved or almost nef tangent sheaf.
Then, the following conditions are equivalent.
\begin{enumerate}[label=$(\alph*)$]
\item The tangent sheaf $\mathcal{T}_X$ is generically ample.
\item The augmented irregularity $\widehat{q}(X)$ is zero.
\item Any finite quasi-\'etale cover of $X$ is rationally connected.

\end{enumerate}
Furthermore, if $X$ is maximally quasi-\'etale, then the above equivalent conditions are also equivalent to the following conditions$:$
 \begin{enumerate}[label=$(\alph*)$]\setcounter{enumi}{3}
 \item $X$ is rationally connected.
 \item $X$ is simply connected.
 \end{enumerate}
\end{cor}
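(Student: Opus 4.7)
The approach is to reduce every condition to the vanishing of the augmented irregularity $\widehat{q}(X)$, organized around Theorem \ref{thm-MRC-Albanese-almostnef}. For (a) $\Leftrightarrow$ (b), I would pass to the maximally quasi-\'etale (Galois) cover $\nu\colon \widehat{X} \to X$ supplied by that theorem. By Proposition \ref{prop-genericallyample-etale}, $\mathcal{T}_X$ is generically ample if and only if $\mathcal{T}_{\widehat X}$ is. The Fujita decomposition $0 \to \mathcal{F} \to \mathcal{T}_{\widehat{X}} \to \mathcal{Q} \to 0$ of Theorem \ref{thm-fujita-decomposition-klt} then shows that $\mathcal{T}_{\widehat X}$ is generically ample precisely when the flat part $\mathcal{Q}^{\vee\vee}$ vanishes, and by Theorem \ref{thm-MRC-Albanese-almostnef}(3) the rank of this flat part equals $\widehat{q}(X)$, which gives the equivalence.

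For (b) $\Rightarrow$ (c), if $\widehat{q}(X) = 0$ then the abelian variety $A$ in Theorem \ref{thm-MRC-Albanese-almostnef} is a point, so $\widehat{X}$ is itself a fiber of the induced MRC fibration $\varphi$ and is thus rationally connected; consequently $X$ is rationally connected as a surjective image. For an arbitrary finite quasi-\'etale cover $Y \to X$, composition of covers yields $\widehat{q}(Y) \le \widehat{q}(X) = 0$; meanwhile $Y$ remains projective klt with $\mathcal{T}_Y$ positively curved or almost nef (via pulling back the metric as in the proof of Theorem \ref{thm-positively-curved}, or via Lemma \ref{lem-elementary-lem-almostnef}(4) in the almost nef case), so the same argument applied to $Y$ gives its rational connectedness. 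Conversely, (c) $\Rightarrow$ (b) by applying (c) to $Y = \widehat{X}$: since rationally connected varieties admit no non-constant morphism to an abelian variety, the map $\varphi \colon \widehat{X} \to A$ of Theorem \ref{thm-MRC-Albanese-almostnef}(2) must be constant, forcing $\dim A = \widehat{q}(X) = 0$.

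Finally, assume $X$ is maximally quasi-\'etale. Then (c) $\Rightarrow$ (d) is immediate by taking $Y = X$ itself, and (d) $\Rightarrow$ (e) is Takayama's theorem \cite[Theorem 1.1]{Tak03} that every rationally connected projective klt variety is simply connected. For the remaining implication (e) $\Rightarrow$ (b), combining the maximally quasi-\'etale hypothesis $\widehat{\pi}_1(X_{\reg}) \cong \widehat{\pi}_1(X)$ with $\pi_1(X) = 0$ shows every finite quasi-\'etale cover of $X$ is trivial, so $\widehat{q}(X) = q(X)$; moreover, simple connectedness of $X$ lets the Albanese map $\alpha \colon X \to \Alb(X)$ lift through the universal cover $\mathbb{C}^{q(X)} \to \Alb(X)$, and the compactness of $X$ forces this lift to be constant, so $q(X) = 0$. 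The main subtlety I anticipate lies in the step (b) $\Rightarrow$ (c): one must carefully verify that $\mathcal{T}_Y$ inherits positive curvature or almost nefness from $\mathcal{T}_X$ along a quasi-\'etale cover, so that Theorem \ref{thm-MRC-Albanese-almostnef} becomes applicable to $Y$ and the argument propagates uniformly to every quasi-\'etale cover.
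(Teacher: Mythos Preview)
Your proof is correct and follows essentially the same architecture as the paper for (a)\,$\Leftrightarrow$\,(b)\,$\Leftrightarrow$\,(c), (c)\,$\Rightarrow$\,(d), and (d)\,$\Rightarrow$\,(e); the only genuine divergence is in closing the loop from~(e). The paper proves (e)\,$\Rightarrow$\,(a) directly: assuming $\mathcal{T}_X$ is not generically ample, the last graded piece of the Harder--Narasimhan filtration gives a nonzero flat locally free quotient $\mathcal{Q}^{\vee\vee}$ (Proposition~\ref{prop-almostnef-c2-locallyfree}), and since $\pi_1(X)=0$ this flat bundle is trivial, producing a nonzero section of $\Omega_X^{[1]}$; the GKKP extension theorem then identifies $H^0(X,\Omega_X^{[1]})$ with $H^0(\widetilde X,\Omega_{\widetilde X})$, which vanishes because $\widetilde X$ is simply connected by Takayama. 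Your route (e)\,$\Rightarrow$\,(b) is more topological: you use the maximally quasi-\'etale hypothesis to kill all finite quasi-\'etale covers, reducing $\widehat q(X)$ to $q(X)$, and then an Albanese lifting argument to force $q(X)=0$. Your approach is arguably more elementary in that it sidesteps the reflexive differential extension theorem of \cite{GKKP11}, while the paper's argument stays closer to the sheaf-theoretic machinery developed in Section~\ref{Sec-3}; note that your final step tacitly uses that the Albanese image generates $\Alb(X)$ (equivalently, Lemma~\ref{lem-agumented-irregularity-almostnef} gives surjectivity), so the constant lift really forces $\dim\Alb(X)=0$. Both arguments are careful not to invoke Theorem~\ref{thm-MRC-Albanese-almostnef}\,(5), avoiding the circularity the paper flags.
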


\begin{proof}
By Theorem \ref{thm-MRC-Albanese-almostnef} (4), we have already known $(a) \Leftrightarrow (b)$. 
Since the irregularity of rationally connected varieties is zero, we have $(c) \Rightarrow (b)$.
 We now show that if $(c)$ does not hold, then $(b)$ does not.
 If some quasi-\'etale cover $X'$ of $X$ is not rationally connected, by Theorem \ref{thm-MRC-Albanese-almostnef} (1), (2), we can take a quasi-\'etale cover \(\widehat{X'}\to X'\) and the corresponding MRC  fibration \(\widehat{X'}\to A\) with \(\dim(A)=\widehat{q}(X')\). 
 Since $\widehat{X'}$ is not rationally connected,  we obtain $\widehat{q}(X) = \widehat{q}(X') = \dim A \neq 0$, contradicting $(b)$.

For the last conclusion for $(d)$ and $(e)$, we only prove that $(e) \Rightarrow (a)$ under the assumption that $X$ is maximally quasi-\'etale (cf.~\cite[Section 2]{Wu21}). 
Suppose to the contrary that there exist ample divisors \(H_1,\ldots, H_{n-1}\) with \(\mu^{\min}_\alpha(\mathcal{T}_X)=0\), 
where \(\alpha=H_1\cdots H_{n-1}\).
Consider the Harder-Narasimhan filtration
\[
0=\mathcal{E}_0\subseteq \mathcal E_1\subseteq\cdots\subseteq \mathcal E_r=\mathcal T_X. 
\]
Then, we obtain \(\mu^{\min}_\alpha (\mathcal T_X) =\mu_{\alpha}(\mathcal T_X/ \mathcal E_{r-1})=0\). 
Since \(\mathcal T_X\) is positively curved or almost nef, so is \(\mathcal{Q}\coloneqq \mathcal T_X/ \mathcal E_{r-1}\). Hence \(\mathcal{Q}^{\vee\vee}\) is a flat locally free sheaf by Proposition \ref{prop-almostnef-c2-locallyfree}.
Thus, we obtain an injective sheaf morphism \(\mathcal{Q}^{\vee}\to \Omega_X^{[1]}\).
Since any flat locally free sheaf is constructed by a linear representation of the fundamental group, we have \(\mathcal{Q}^{\vee}\cong \mathcal{O}_{X}^{\oplus \rk \mathcal{Q}}\) by the triviality of \(\pi_1(X)\). In particular, we obtain $H^0(X, \Omega_{X}^{[1]}) \neq 0$.
On the other hand, for any resolution \(\pi \colon \widetilde{X}\to X\), \cite[Theorem 1.4]{GKKP11} gives rise to the equality
$$
\dim_{\C} H^0(X, \Omega_{X}^{[1]}) 
= \dim_{\C} H^0(\widetilde{X}, \Omega_{\widetilde{X}}).
$$
Since $\widetilde{X}$ is also simply connected by \cite[Theorem 1.1]{Tak03}, the right-hand side is zero, which is absurd.
\end{proof}

\begin{ex}
\label{exa-Qabelian}
By \cite{Uen75} and \cite{Cam11}, 
there exists a klt rationally connected $Q$-abelian threefold $X$.
Since $X$ admits a quasi-\'etale cover $A \to X$ from an abelian variety $A$, 
the tangent sheaf $\mathcal{T}_{X}$ is positively curved by Lemma \ref{lem-p-curved}.
Furthermore, we have $\widehat{q}(X)= \rk \Omega_{X}^{[1]}=3.$
In this example, the variety $X$ is rationally connected, but some quasi-\'etale cover is not rationally connected.
By Corollary \ref{cor-maximally-etale-rc}, the variety $X$ is not maximally quasi-\'etale.
\end{ex}

The following example constructed in \cite[Section 7]{MY21} is a typical example in the study of non-isomorphic surjective endomorphisms.

\begin{ex}[{cf.\,\cite[Section 7]{MY21}}]
\label{exa-Matsuzawa-Yoshikawa}
Let \(E\) be an elliptic curve and \(Y\coloneqq \mathbb{P}(\mathcal{O}_E\oplus\mathcal{L})\) be a projective bundle where \(\mathcal{L}\) is a invertible sheaf of degree zero.
Note that the case $\mathcal{L}=\mathcal{O}_{E}$ has been studied in \cite[Subsection 3.B]{GKP14} (cf.\,\cite[Example 1.1]{Ou14}).
Let $[-1] \colon E \to E$ be the multiplication by $(-1)$ map.
Then, we obtain the following isomorphisms:
\begin{itemize}
\item $\gamma \colon  \mathbb{P}(\mathcal{O}_E\oplus [-1]^{*}\mathcal{L}) \to Y$ from pullback of $\mathcal{O}_E\oplus\mathcal{L}$ by $[-1]$.
\item $\Phi \colon    \mathbb{P}(\mathcal{O}_E\oplus \mathcal{L}^{-1}) \to \mathbb{P}(\mathcal{O}_E\oplus [-1]^{*}\mathcal{L})$ from $\mathcal{L}^{-1} \cong [-1]^{*}\mathcal{L}$.
\item $\iota \colon    Y \to \mathbb{P}(\mathcal{O}_E\oplus \mathcal{L}^{-1}) $ from 
$\mathcal{O}_E\oplus \mathcal{L}^{-1} \cong \mathcal{L}^{-1} \oplus \mathcal{O}_E \cong (\mathcal{O}_E \oplus\mathcal{L}) \otimes \mathcal{L}^{-1}$.
\end{itemize}
Then, the involution \(\sigma \coloneqq  \gamma \circ \Phi \circ \iota \)  is isomorphism of $Y$ over $E$.
Thus, we can take the quotient \(X\coloneqq Y/\langle  \sigma \rangle \)  satisfying the following commutative diagram$:$
 \begin{equation}
 \label{eq-Matsuzawa-Yoshikawa}
\xymatrix@C=40pt@R=30pt{
Y\ar[r]^(0.32){\nu}\ar[d]_{\varphi}&X=Y/\langle  \sigma \rangle \ar[d]^{f}\\
E \ar[r]_(0.32){\rho} &E/\langle  -1\rangle  \cong\mathbb{P}^1, 
 }
 \end{equation}
where the horizontal arrows are quotient morphisms and \(f\) is induced from \(\varphi\).
Furthermore, the morphism \(\nu \colon Y \to X\) is quasi-\'etale and \(-K_X\) is nef.
Moreover, if \(\mathcal{L}\) is non-torsion (resp.\,torsion), then \(\kappa(-K_X)=0\) (resp.\,\(\kappa(-K_X)=1\)) holds.
Now, we claim the following$:$
\begin{claim}
\label{claim-MY-example}
\begin{enumerate}
\item $X$ is a rational surface with canonical singularities. More precisely, the variety $X$ has 8 singular points with $A_1$ singularities. 
\item $\mathcal{T}_X$ is positively curved.
\item There exists a $\Q$-Cartier divisor $D$ with $2 D \sim 0$ and $\mathcal{O}_{X} (D) \subseteq \Omega_{X}^{[1]}$. 
In particular, the tangent sheaf $\mathcal{T}_{X}$ is not generically ample.
\item \(\mathcal{T}_X\) is not almost nef.
\end{enumerate}
\end{claim}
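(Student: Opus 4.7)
For part $(1)$, the plan is to analyse the fixed locus of $\sigma$ and the local linearisation at each fixed point. The involution $[-1]$ on $E$ has fixed locus $E[2]$ consisting of four $2$-torsion points, and over each $e\in E[2]$ the restriction $\sigma|_{\varphi^{-1}(e)}$ is a non-trivial involution of $\mathbb{P}^1$: it cannot be the identity (otherwise $\nu$ would be branched in codimension one, contradicting its being quasi-\'etale), and since $\iota$ realises the swap between the $\mathcal{O}$- and $\mathcal{L}$-summands the action is genuinely non-trivial. Such an involution of $\mathbb{P}^1$ has exactly two fixed points, so $\sigma$ has $4\times 2=8$ fixed points on $Y$ in total. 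At each such point $d\sigma=-\mathrm{id}$: the horizontal eigenvalue $-1$ comes from $d[-1]_E$, and a non-trivial involution of $\mathbb{P}^1$ has differential $-1$ at every one of its fixed points. Hence the local quotient is $\mathbb{C}^2/\langle -\mathrm{id}\rangle$, the classical $A_1$ (canonical) singularity. The morphism $f$ then exhibits $X$ as a ruled surface over $\mathbb{P}^1$ with general fibre $\mathbb{P}^1$, so $X$ is rational.

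For part $(2)$, it suffices to show that $\mathcal{T}_Y$ is nef and invoke Lemma \ref{lem-p-curved} along the identification $\nu^{[*]}\mathcal{T}_X=\mathcal{T}_Y$. The relative Euler sequence $0\to\mathcal{T}_{Y/E}\to\mathcal{T}_Y\to \varphi^*\mathcal{T}_E\to 0$ has trivial quotient $\varphi^*\mathcal{T}_E\cong\mathcal{O}_Y$, while the kernel $\mathcal{T}_{Y/E}\cong \mathcal{O}_Y(2)\otimes \varphi^*\mathcal{L}^{-1}$ is nef because $\mathcal{O}_E\oplus\mathcal{L}$ is semistable of degree zero on $E$ (so $\mathcal{O}_Y(1)$ is nef) and $\varphi^*\mathcal{L}^{-1}$ is numerically trivial. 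An extension of nef vector bundles is nef, so $\mathcal{T}_Y$ is nef, hence positively curved.

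For part $(3)$, the $\sigma$-equivariant inclusion $\varphi^*\Omega_E\cong \mathcal{O}_Y\hookrightarrow \Omega_Y$ has $\sigma$ acting as $-1$ on $\varphi^*\Omega_E$ (as $[-1]$ acts as $-1$ on $\Omega_E$). Equivariant $\nu$-descent thus converts the anti-invariant $\mathcal{O}_Y$ into a rank-one reflexive sheaf $\mathcal{O}_X(D)$ whose reflexive square is trivial, giving $2D\sim 0$, and descends the inclusion to $\mathcal{O}_X(D)\hookrightarrow \Omega_X^{[1]}$. Since $D\equiv 0$ numerically, $\mu_\alpha(\mathcal{O}_X(D))=0$ for every movable class $\alpha$, so $\mu^{\min}_\alpha(\mathcal{T}_X)\le 0$, and $\mathcal{T}_X$ cannot be generically ample.

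For part $(4)$, the crux is that $D$ is \emph{not} Cartier at any singular point $p_i$. At each fixed point $q_i\in Y$ the $\sigma$-equivariant line bundle $(\mathcal{O}_Y,-1)$ carries the non-trivial character of the stabiliser $\mathbb{Z}/2$, and equivariant descent of such a bundle produces the unique non-trivial class in the local divisor class group $\textup{Cl}(\mathcal{O}_{X,p_i})\cong \mathbb{Z}/2$ of the $A_1$ singularity. Dualising the inclusion of $(3)$ gives a generically surjective morphism $\mathcal{T}_X\to \mathcal{O}_X(-D)$. Assume for contradiction that $\mathcal{T}_X$ is almost nef. The plan is then to pick a very general smooth curve $C$ through $p_1$, drawn from the codimension-one sub-linear system of divisors through $p_1$ inside a sufficiently ample $|H|$; by very general choice, $C$ avoids the countable bad set $\mathbb{S}(\mathcal{T}_X)$. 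Since $\mathcal{O}_X(-D)$ fails to be locally free at $p_1$, the restriction $\mathcal{O}_X(-D)|_C$ carries non-zero torsion there, and hence $(\mathcal{O}_X(-D)|_C)^{\vee\vee}$ is a line bundle on $C$ of strictly negative degree (using $-D\cdot C=0$). Composing the generic surjection with the quotient-by-torsion yields a generically surjective morphism $\mathcal{T}_X|_C\to (\mathcal{O}_X(-D)|_C)^{\vee\vee}$, and Lemma \ref{lem-elementary-pro-nef}\,(3) would force this line bundle to be nef on $C$, contradicting its negative degree. I expect the main technical hurdle to be verifying the non-Cartier-ness of $D$ at each $p_i$, which I would settle by a local computation showing that $\mathcal{T}_X$ and $\Omega_X^{[1]}$ near $p_i$ both decompose as $\mathcal{L}_{\mathrm{loc}}^{\oplus 2}$ with $\mathcal{L}_{\mathrm{loc}}=\mathcal{O}_{X,p_i}^{\mathrm{odd}}$ a non-free rank-one reflexive module satisfying $\mathcal{L}_{\mathrm{loc}}^{[\otimes 2]}\cong \mathcal{O}_{X,p_i}$ and $\mathcal{L}_{\mathrm{loc}}^{\otimes 2}/\textup{Tor}\cong \mathfrak{m}_{p_i}$, exactly paralleling Example \ref{exa-Gachet20}.
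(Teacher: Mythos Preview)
Your arguments for parts (1) and (3) are correct; (1) is essentially the paper's argument, while (3) takes a cleaner equivariant-descent route in place of the paper's explicit construction of $D=-2F+\sum_i F_i$ via the differential $df$.

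Part (2) has a genuine gap: the step ``$\mathcal{T}_Y$ is nef, hence positively curved'' is not justified. The implication nef $\Rightarrow$ positively curved (in the sense of Definition~\ref{defn-posi}\,(5)) is not known for vector bundles of rank $\ge 2$; it is a form of the singular Griffiths conjecture, and the paper's own table of implications deliberately omits this arrow. The paper instead constructs an explicit positively curved metric by observing that the universal cover of $Y$ is $\mathbb{C}\times\mathbb{P}^1$, equipping it with the product of the Euclidean and Fubini--Study metrics, and noting that this descends because the monodromy of $\mathcal{L}$ (degree zero on $E$) is unitary. Your nefness computation is fine, but it does not yield positive curvature without this extra step.

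Part (4) also has a gap in your main argument. A very general member $C\in|H|$ through the $A_1$ point $p_1$ is \emph{not} smooth at $p_1$: a general Cartier hyperplane section of the cone $\{uw=v^2\}$ through the vertex is the union of two rulings, hence nodal. (Smooth curves through $p_1$ exist---the individual rulings---but they are non-Cartier and do not lie in $|H|$.) This breaks both your assertion that $C$ is smooth and the degree bookkeeping that follows; in particular the identity $\deg(\mathcal{O}_X(-D)|_C)=-D\cdot C$ is delicate when $D$ is non-Cartier at a point of $C$. The local computation you sketch at the end ($\mathcal{L}_{\mathrm{loc}}^{\otimes 2}/\mathrm{Tor}\cong\mathfrak{m}_{p_i}$) does give a clean global argument, but via Lemma~\ref{lem-elementary-lem-almostnef}\,(4),(5),(6) rather than restriction to a curve: if $\mathcal{T}_X$ were almost nef then so would be $\mathcal{O}_X(-D)^{\otimes 2}$, whose torsion-free quotient is a \emph{proper} ideal sheaf of $\mathcal{O}_X$, contradicting Lemma~\ref{lem-elementary-lem-almostnef}\,(6). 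The paper takes an entirely different route: it passes to the minimal resolution $\widetilde{X}\to X$, uses Serrano's exact sequence $\mathcal{T}_{\widetilde{X}}\twoheadrightarrow(-g^*K_{\mathbb{P}^1}-E)\otimes\mathcal{I}_\Gamma$, identifies the induced divisor $S\subset\mathbb{P}(\mathcal{T}_{\widetilde{X}})$, and shows via explicit intersection theory (following \cite{JLZ23}) that $\mathcal{O}_{\mathbb{P}(\mathcal{T}_{\widetilde{X}})}(1)|_S$ is not pseudo-effective, so that $S$ dominates $\widetilde{X}$ and lies in $\mathbb{S}(\mathcal{O}(1))$. Your approach, once repaired as above, is more elementary; the paper's is more structural and gives additional information about where the non-nef curves live.
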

Since $\varphi$ is an MRC  fibration (and also the Albanese map), we obtain 
$
\widehat{q}(X) = \widehat{q}(Y) =1.
$
Hence, by Theorem \ref{thm-MRC-Albanese-almostnef}, 
there exists a maximal destabilizing sheaf in $ \Omega_{X}^{[1]}$ of rank one, which is just $\mathcal{O}_{X}(D)$.
By Corollary \ref{cor-maximally-etale-rc}, the variety $X$ is not maximally quasi-\'etale, 
since $X$ is rationally connected while $Y$ is not rationally connected.

\begin{proof}[Proof of Claim \ref{claim-MY-example}]
(1). We show that $X$ has canonical singularities.  
Take ramification points $p_1, \ldots, p_4 \in E$ of $\rho \colon  E \to \mathbb{P}^1$.
By the construction of the involution $\sigma \colon  Y \to Y$,
the restriction $\sigma|_{\varphi^{-1}(p_i)}$ is expressed as follows:
\begin{equation*}
\label{eq-MY-intamplified}
\begin{array}{ccccc}
\sigma|_{\varphi^{-1}(p_i)} \colon  &Y|_{\varphi^{-1}(p_i)} = \mathbb{P}^1& \longrightarrow & Y|_{\varphi^{-1}(p_i)} = \mathbb{P}^1 \\
&(z: w) & \longmapsto & (w : z).
\end{array}
\end{equation*}
Thus, there are 8 fixed points of $\sigma$ that are singular points of $X$ with $A_1$ singularity. 
In particular, the variety $X$ has canonical singularities.

(2). It is sufficient to show that $\mathcal{T}_Y$ is positively curved (cf.~Lemma \ref{lem-p-curved}). 
Notice that the universal cover of $Y$ is isomorphic to $\C \times \mathbb{P}^1$.
Since the Euclidean space $\C$ has the standard Euclidean metric $g_{\C}$ and the projective space $\mathbb{P}^1$ has the Fubini-Study metric $g_{FS}$, 
the tangent bundle $\mathcal{T}_Y$ admits a (smooth) positively curved metric $g_{Y}$, which is a quotient metric of $g_{\C} \times g_{FS}$ by the action of $\pi_{1}(E)$. 
Indeed, the metric $g_{Y}$ is a K\"ahler metric with semi-positive biholomorphic sectional curvature.

(3). 
We follow the argument in \cite[Example 3.7]{GKP14}.
Take the branch points $q_1, \ldots, q_4 \in \mathbb{P}^1$ of $\rho \colon  E \to \mathbb{P}^1$.
Let $F_{i} \coloneqq f^{-1}(q_i)$ be the set-theoretic fiber of $q_i$ and $F$ a general fiber of $f$.
Consider the  differential map 
$$
d f|_{X_{\reg}} \colon  \pi^{*} \Omega^{1}_{\mathbb{P}^1}|_{X_{\reg}} \to \Omega^{1}_{X_{\reg}}. 
$$
Since $f^{*}[q_i] = 2 F_{i}$ by the commutative diagram, we obtain the following factorization$:$
 \begin{equation*}
\xymatrix@C=30pt@R=30pt{
\pi^{*} \Omega^{1}_{\mathbb{P}^1}|_{X_{\reg}}\ar@{->}@/^18pt/[rr]^{d f|_{X_{\reg}}} \ar[r]&
\mathcal{O}_{X_{\reg}} (-2 F + \sum_{i=1}^{4} F_{i} \cap X_{\reg}) \ar[r]
 & \Omega^{1}_{X_{\reg}}.\\
 }
 \end{equation*}
Thus, by taking a reflexive hull, the divisor $D \coloneqq  -2 F + \sum_{i=1}^{4} F_{i}$ satisfies $\mathcal{O}_{X} (D) \subseteq \Omega_{X}^{[1]}$.
Hence, we conclude our proof by the following linear equivalence.
$$
2 D =-4 F + \sum_{i=1}^{4} 2F_{i} \sim -4 F + \sum_{i=1}^{4}f^{*}[q_i] \sim 0.
$$

(4). Let \(\pi \colon \widetilde{X}\to X\) be the minimal resolution with the induced morphism \(g \colon \widetilde{X}\to C\coloneqq \mathbb{P}^1\) such that \(g=f\circ\pi\). 
Since \(\widetilde{X}\) is the minimal resolution of the 8 \(A_1\)-singularities, there are 4 singular fibers of \(g\) such that 
$$g^*p_i=2\ell_i+M_i+N_i,$$ 
where \(M_i\) and \(N_i\) are two disjoint \((-2)\)-curves and \(\ell_i\) is a \((-1)\)-curve such that \(\ell_i\cdot M_i=\ell_i\cdot N_i=1\) (cf.\,\cite[Figure 3.1]{GKP14}).
In the following, we follow the notation as in \cite[Notation 2.2]{JLZ23}.
Applying \cite[Section 3]{Ser96}, we have a natural surjection
\[
T_{\widetilde{X}}\to (-g^*K_C-E)\otimes\mathcal{I}_{\Gamma}\to 0, 
\]
where \(E=\sum_{c\in C}g^*c-(g^*c)_{\text{red}}=\sum_{i=1}^4\ell_i\) and \(\mathcal{I}_\Gamma\) is an ideal sheaf and the support of \(\Gamma\) consists of points \(b\in\widetilde{X}\) such that the reduced structure \(g^{-1}(g(b))_{\text{red}}\) is singular at \(b\).
Since the local equation of \(g^{*}(p_i)\) is of the form \(x^{\mu_i}y^{\nu_j}=0\),  
for each \(b\in \text{Supp}\,\Gamma\), the stalk \(\mathcal{I}_{\Gamma,b}=(x,y)\) is reduced by \cite[Proof of Proposition 3.1]{Ser96} (cf.~\cite[Table 1]{JLZ23}).
Then, we have
\[
-g^*K_C-E\sim_{\mathbb{Q}}\frac{1}{2}\sum_{i=1}^4(M_i+N_i).
\]
Let 
$$
S\coloneqq\mathbb{P}_{\widetilde{X}}((-g^*K_C-E)\otimes\mathcal{I}_{\Gamma})\subseteq\mathbb{P}(T_{\widetilde{X}})$$ 
be a prime divisor, obtained via blow-ups of \(\mathcal{I}_{\Gamma}\) from \(\widetilde{X}\).
Denote by \(\chi \colon \mathbb{P}(T_{\widetilde{X}})\to \widetilde{X}\) the natural projection.
Write 
$$
(\chi|_S)^*M_i=M_i'+u_i \text{ and  }(\chi|_S)^*N_i=N_i'+v_i, $$
where \(u_i\) and \(v_i\) are \(\chi|_Y\)-exceptional.
Since \(\Gamma\) is reduced, the morphism \(S\to \widetilde{X}\) is a composite of smooth blow-ups. 
Then, by an elementary calculation (cf.~\cite[Notation 2.2 (9)]{JLZ23}), we obtain that 
\[\mathcal{O}_S(1)\coloneqq \mathcal{O}_{\mathbb{P}(T_{\widetilde{X}})}(1)|_S\sim_{\mathbb{Q}}\frac{1}{2}\sum_{i=1}^4(M_i'+N_i'-u_i-v_i).
\]
Note that the right-hand side is not pseudo-effective (see \cite[Lemma 4.12]{JLZ23}).
Therefore, by  \cite{BDPP13}, there exists a covering family of \(S\) such that \(\mathcal{O}_S(1)\) has a negative intersection with this family.
In particular, we see that \(S\subseteq\mathbb{S}(\mathcal{O}_{\mathbb{P}(T_{\widetilde{X}})}(1))\) that dominates \(\widetilde{X}\).
This implies that \(\mathcal{T}_{\widetilde{X}}\), and hence \(\mathcal{T}_X\),  is not almost nef (cf.~Lemma \ref{lem-functorial-resolution-tangent} and \cite[Proposition 1.2]{Wah75}).
\end{proof}
\end{ex}

\subsection{Proof of Corollary \ref{cor-almost-nef}}
This section is devoted to the proof of Corollary \ref{cor-almost-nef}. 
This shows that the almost nefness of tangent sheaves imposes a rather restrictive condition on the singularities.

\begin{proof}[Proof of Corollary \ref{cor-almost-nef}]
We first reduce the proof to the case where \(X\) is a rationally connected projective klt variety with \(K_X\sim_{\mathbb{Q}} 0\) 
and $\mathcal{T}_{X}$ is almost nef.
By Theorem \ref{thm-almost-nef}, there exists a finite \'etale cover \(X'\to X\) such that the Albanese morphism \(X'\to A\) is an MRC fibration.
Since \(K_{X'}\equiv 0\) and \(X'\) also has only klt singularities, 
by the abundance and \cite[Theorem 4.8]{Amb03},  
by replacing \(X'\) with  a further quasi \'etale cover, we may assume that \(X'\cong A\times F\), 
where \(F\) is a rationally connected fiber and \(A\) is an abelian variety. 
Thus, it is sufficient to show that $F$ is one point. 

We will show that  \(X\) is an \'etale quotient of an abelian variety 
under the assumption  that \(X\) is a rationally connected projective klt variety with \(K_X\sim_{\mathbb{Q}} 0\) and $\mathcal{T}_{X}$ is almost nef. 
Let \(X'\to X\) be the global index one cover with respect to \(mK_X\sim 0\) such that \(K_{X'}\sim 0\).
We claim that \(X'\) is smooth and hence \(X\) has only cyclic quotient singularities.
Indeed, since \(X'\) has only Gorenstein rational singularities,  it has only canonical singularities (cf.~\cite[Theorem 5.22]{KM98}).
Note also that \(\mathcal{T}_{X'}\) as the reflexive pullback of \(\mathcal{T}_X\) is still almost nef (cf.~Lemma \ref{lem-elementary-lem-almostnef} (4)). 
Let \(\widetilde{X'}\to X'\) be the functorial resolution as in Lemma \ref{lem-functorial-resolution-tangent}.
Then \(\mathcal{T}_{\widetilde{X'}}\) is almost nef and \(K_{\widetilde{X'}}\) is  pseudo-effective, 
by noting that \(X'\) has only canonical singularities.
Therefore, applying Corollary \ref{cor-c1-almostnef-tangent}, the variety \(\widetilde{X'}\) is an \'etale quotient of an abelian variety by the purity of branch loci (cf.~\cite[Corollary 4.7]{Iwa22}).
By Lemma \ref{lem-bircontr-Q-abelian},  the variety \(X'\cong\widetilde{X'}\) is smooth.

By the slicing theorem, we may assume that there is a cyclic group $G\coloneqq \langle g \rangle$  of order \(m\) 
acting on $\mathbb{C}^n$ such that locally we have $X=\mathbb{C}^n/G$.
Note that the natural linear representation \(\rho \colon G\to \text{GL}(\mathbb{C}^n)\) splits into a direct sum of one-dimensional representations. 
Hence, we may assume that the action of \(g\) on \(\mathbb{C}^n\) is 
via 
$$
g(z)=g(z_1,\cdots,z_n)=(e_{1/p_1}z_1,\cdots, e_{1/p_n}z_n),$$  
where $e_{1/k}\coloneqq \exp(2\pi i / k)$. 
Then, the group $G$  acts diagonally on 
$$
\mathcal{T}_{\mathbb{C}^n}\cong (\partial/ \partial z_1) \oplus\cdots\oplus (\partial/ \partial z_n)
\text{ by } g\cdot (\partial/ \partial z_k)_{(z_1,\cdots,z_n)}=e_{1/p_k}(\partial/ \partial z_k)_{g(z_1,\cdots,z_n)},  
$$ 
where \(k=1,2,\cdots,n\). 
Under the identification 
$$
\mathcal{T}_{\mathbb{C}^n}\cong (\partial/ \partial z_1) \oplus \cdots\oplus(\partial/ \partial z_n) \cong \mathcal{O}^{\oplus n},$$ 
the equivariant divisorial sheaf $\mathcal{T}_X \cong \mathcal{T}_{\mathbb{C}^n}^{G}$ (cf.~\cite[Appendix B]{GKKP11}) can be written as
$\mathcal{T}_X \cong \mathcal{T}_{\mathbb{C}^n}^{G} \cong J_1 \oplus \cdots\oplus J_n$, 
where 
\begin{align*}
J_k&=
\{f(z)(\partial/\partial z_k) \,|\, f(z)(\partial/\partial z_k)=e_{1/p_k} f(e_{1/p_1}z_1,\cdots, e_{1/p_n}z_n)(\partial/\partial z_k)\}\\
&=\{f(z) \,|\, f(z)=e_{1/p_k} f(e_{1/p_1}z_1,\cdots, e_{1/p_n}z_n)\}. 
\end{align*}
Note that $J_k$ is generated by $z_1^{a_1}\cdots z_n^{a_n}$ (as a $\mathbb{C}$-algebra)
such that 
$$(1/p_k)(1 + a_k) + \sum_{j\neq k}(1/p_j)a_j  \equiv_\mathbb{Z} 0.$$ 
On the other hand, the coordinate ring of $X$ can be regarded as ${\mathbb{C}[z_1,\cdots,z_n]}^G$, 
which is generated by $z_1^{a_1'}\cdots z_n^{a_n'}$ such  that $\sum_{j=1}^n(1/p_j)a_j'\equiv_\mathbb{Z} 0$. 
Let us consider $J_k^{\otimes p_k}$ and the morphism 
$$J_k^{\otimes p_k} \to \mathcal{O}_X={\mathbb{C}[z_1,\cdots,z_n]}^G.$$ 
This morphism is well-defined and the image is a non-trivial ideal. 
In fact, for \(p_k\) many  $(a_1^{(i)},\cdots,a_n^{(i)})$ with 
$$(1/p_k)(1 + a_k^{(i)}) + \sum_{j\neq k}(1/p_j)a_j^{(i)}  \equiv_\mathbb{Z} 0,$$ where \(i=1,2,\cdots,p_k\), 
the product $$\prod_{i=1}^{p_k} z_1^{a_1^{(i)}} \cdots z_n^{a_n^{(i)}}=z_1^{\sum_i a_1^{(i)}} \cdots z_n^{\sum_i a_n^{(i)}}$$ 
satisfies that 
\begin{align*}
(1/p_1)\sum_{i=1}^{p_k} a_1^{(i)} +\cdots  +(1/p_n) \sum_{i=1}^{p_k} a_n^{(i)} &\equiv_\mathbb{Z} 
(1/p_k)(p_k + \sum_{i=1}^{p_k} a_k^{(i)}) + \sum_{i=1}^{p_k} \sum_{j\neq k} (1/p_j)a_j^{(i)}\\
&=\sum_{i=1}^{p_k} ((1/p_k)(1+a_k^{(i)})+\sum_{j\neq k}(1/p_j)a_j^{(i)})
\equiv_\mathbb{Z} 0.
\end{align*}
This implies that the image is a non-trivial  element of $\mathcal{O}_X={\mathbb{C}[z_1,\cdots,z_n]}^G$. 
From this, we obtain  the morphism $J_k^{\otimes p_k} \to \mathcal{O}_X={\mathbb{C}[z_1,\cdots,z_n]}^G$. 
In particular, the sheaf \(J_k^{\otimes p_k}\) is not almost nef.
By Lemma \ref{lem-elementary-lem-almostnef} (2) and (5), we see that \(J_k\) is not almost nef.
We note that \(X\) is smooth if and only if \(G\) has no fixed points on \(\mathbb{C}^n\) if and only if \(J_k^{\otimes p_k}\) is trivial for each \(k\) (recalling that here our argument is local).
Since we have the splitting \(\mathcal{T}_X=\oplus_k J_k\) (cf.~\cite[Appendix B]{GKKP11}), it follows that \(\mathcal{T}_X\) is not almost nef once \(G\) has a fixed point.
\end{proof}

As an immediate consequence of Corollary \ref{cor-almost-nef}, we deduce the following result.

\begin{cor}\label{ques-high-dim-not-almost-but=pseudo-effective}
Let \(X\) be a normal projective variety, which is a quasi-\'etale quotient of an abelian variety.
Then the tangent sheaf \(\mathcal{T}_X\) is pseudo-effective but not almost nef unless \(X\) is smooth. 
\end{cor}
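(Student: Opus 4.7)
The plan is to deduce this corollary as a direct packaging of Lemma \ref{lem-p-curved} (for the pseudo-effective side) and Corollary \ref{cor-almost-nef} (for the failure of almost nefness). Let $\pi \colon A \to X$ denote the given finite quasi-\'etale cover from an abelian variety $A$. The key starting observation is that, because $\pi$ is \'etale in codimension one, there is no ramification divisor, so $K_X \sim_{\mathbb{Q}} 0$ is $\mathbb{Q}$-Cartier and $X$ is klt (as a finite quotient of a smooth variety). Moreover, the reflexive pullback satisfies $\pi^{[*]}\mathcal{T}_X = \mathcal{T}_A \cong \mathcal{O}_A^{\oplus \dim A}$.

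For the pseudo-effectivity, I would argue that the trivial bundle $\mathcal{T}_A$ carries an obvious flat Hermitian metric and is in particular positively curved. Applying Lemma \ref{lem-p-curved} to the surjective finite morphism $\pi$, the positive curvature descends to a singular Hermitian metric on $\mathcal{T}_X$ with semi-positive curvature, so $\mathcal{T}_X$ is positively curved. Proposition \ref{pro-basic-implication}(4) then yields pseudo-effectivity of $\mathcal{T}_X$.

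For the ``not almost nef unless smooth'' part, I would argue by contraposition: suppose $\mathcal{T}_X$ is almost nef. Since $X$ is a projective klt variety with almost nef tangent sheaf and $K_X \equiv 0$, Corollary \ref{cor-almost-nef} applies and forces $X$ to be an \'etale quotient of an abelian variety, and in particular smooth. Hence, whenever $X$ is singular, $\mathcal{T}_X$ cannot be almost nef.

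There is essentially no obstacle here beyond invoking the right prior result; the main content of the corollary is conceptual, namely to highlight that for non-smooth quasi-\'etale quotients of abelian varieties one always obtains a sheaf which is positively curved (hence pseudo-effective) but fails to be almost nef. This generalizes the surface phenomenon of Example \ref{exa-Gachet20} to arbitrary dimension and demonstrates that the implication ``pseudo-effective $\Rightarrow$ almost nef'' for vector bundles breaks down systematically in the singular, non-locally-free setting.
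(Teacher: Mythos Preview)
Your proof is correct and matches the paper's approach: the paper states this corollary as ``an immediate consequence of Corollary \ref{cor-almost-nef}'' without further elaboration, and your argument fills in precisely the expected details (pseudo-effectivity via Lemma \ref{lem-p-curved} and Proposition \ref{pro-basic-implication}(4), failure of almost nefness via Corollary \ref{cor-almost-nef} applied to the klt variety $X$ with $K_X\equiv 0$). The only minor point worth tightening is the justification that $X$ is klt and $\mathbb{Q}$-Gorenstein: passing to the Galois closure, $X$ is a quotient of an abelian variety by a finite group acting freely in codimension one, hence has quotient singularities.
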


\section{Examples of projective varieties with certain positive tangent sheaf}\label{Sec-7}

In this section, we provide examples of projective varieties with certain positive tangent sheaf and with vanishing augmented irregularity. 
Furthermore, we classify projective varieties with almost nef tangent sheaf  in dimensions two and three.

\subsection{Varieties with vanishing augmented irregularity}

The first example is a toric variety, which is a typical example whose tangent sheaf satisfies a certain positivity.

\begin{ex}\label{ex-almost-nef-toric}
Let \((X,D)\) be a projective toric pair.
Then, the tangent sheaf $\mathcal{T}_{X}$ is positively curved  and almost nef. 
Indeed, there exists a big open subset \(U\subseteq X\) 
such that  \(U\) is smooth, \(D|_U\) is normal crossing, \(\Omega_U(\text{log}\,D|_U)\) is free (see \cite[4.3, page 87]{Ful93}). 
Then, considering the dual of the injective sheaf morphism \(\Omega_U\to\Omega_U(\text{log}\,D)\), 
we obtain a generically surjective morphism \(\mathcal{O}_X^{\oplus\dim X}\to\mathcal{T}_X\).
Thus, the tangent sheaf \(\mathcal{T}_X\) is 
almost nef by Lemma \ref{lem-elementary-lem-almostnef} (4), and is positively curved 
by noting that the positively curved quotient metric over \(U\) can be extended to \(X\).

Note that if $X$ is \(\mathbb{Q}\)-Gorenstein, then $X$ has vanishing augmented irregularity, 
since it is of Fano type (see \cite[Theorem 1.13]{GKP16}). 
\end{ex}

\begin{ex}\label{ex-gras}
Let \(X\) be a Grassmannian. 
Then, the variety $X$ is Fano, and hence rationally connected. 
Furthermore, since \(X\) is homogeneous, the tangent bundle \(\mathcal{T}_X\) is globally generated. 
However, the variety $X$ is not necessarily  toric. 
This is a trivial example, but it tells us that the converse direction of Example \ref{ex-almost-nef-toric} is not true.
\end{ex}

Toric varieties are almost homogeneous. 
From this perspective, we  extend Example \ref{ex-almost-nef-toric} slightly to the following example: 

\begin{ex}[{cf.~\cite[Example 1]{Wu21}}]\label{ex-almost-homo}
Let \(G\) be a linear algebraic group acting on a normal projective variety \(X\). 
Assume that $X$ is {\textit{ \(G\)-almost homogeneous}} (i.e.,\,there exists an open dense \(G\)-orbit \(U\) in \(X\)). 
Then, by $U \subseteq  X_{\reg}$, 
a \(G\)-equivariant functorial resolution \(\pi \colon \widetilde{X}\to X\) is isomorphic over $U$. 
Since \(\widetilde{X}\) is also \(G\)-almost homogeneous, the tangent sheaf \(\mathcal{T}_{\widetilde{X}}\) is globally generated over \(U\). 
In particular, by  Lemma \ref{lem-p-curved}, the sheaf \(\mathcal{T}_{\widetilde{X}}\) is positively curved, and so is \(\mathcal{T}_X\). 
\end{ex}

Motivated by Corollary \ref{cor-almost-nef}, Claim \ref{claim-MY-example}, and Examples \ref{ex-almost-nef-toric}, \ref{ex-gras}, and \ref{ex-almost-homo}, 
we ask the following question (cf.~\cite[Problem 3.12]{HIM22}), which predicts the geometry of a general fiber of the structure theorems.

\begin{ques}[{cf.~Corollary \ref{cor-maximally-etale-main}}]\label{ques-anf-vanishing-aug}
Let \(X\) be a rationally connected projective klt variety with almost nef tangent sheaf.
Is the augmented irregularity of \(X\) zero? 
Is the anti-canonical divisor \(-K_X\) big?
\end{ques}

In what follows, we explore various forms of positivity for tangent sheaves and compare them to the case of smooth varieties. 
We first extend the main results of \cite{Sie17} and \cite{LiOY18} to the following proposition on the strictly nef tangent sheaf 
(see also Remark \ref{rmk-v-big-singular}).

 \begin{prop}[{Varieties with strictly nef tangent sheaves}]\label{prop-strictly-nef-tangent}
 Let \(X\) be a normal projective variety of dimension \(n\).
  If $\mathcal{T}_{X} $ is strictly nef, then $X \cong \mathbb{P}^n$.  
 \end{prop}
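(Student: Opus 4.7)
The plan is to reduce to the smooth case established in \cite{Sie17} and \cite{LiOY18} by showing that $X$ itself must be smooth. The argument proceeds in three stages.

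First, strict nefness of $\mathcal{T}_X$ implies nefness, hence almost nefness, so the machinery of Subsection \ref{subsec-almostnef} is available. For any torsion-free quotient $\mathcal{T}_X\to\mathcal{Q}$, restricting to a very general complete intersection curve $C\subseteq X_{\reg}\cap X_{\mathcal{T}_X}$ avoiding $\mathbb{S}(\mathcal{T}_X)$, the bundle $\mathcal{T}_X|_C$ is strictly nef in the usual vector-bundle sense, hence any non-zero quotient on $C$ has positive degree. This gives generic ampleness of $\mathcal{T}_X$, and in particular $-K_X\cdot H^{n-1}>0$ for every ample Cartier divisor $H$.

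Second, I would show $X$ is klt and rationally connected. Take a functorial resolution $\pi\colon\widetilde X\to X$; by Lemma \ref{lem-functorial-resolution-tangent} and Lemma \ref{lem-elementary-lem-almostnef}, $\mathcal{T}_{\widetilde X}$ is almost nef. Applying the smooth case of Theorem \ref{thm-almost-nef} (i.e., \cite{Iwa22}) yields a fibration $\widetilde X\to Y$ with $Y$ an \'etale quotient of an abelian variety. If $\dim Y>0$, then the pullback of $\mathcal{T}_Y$ is a non-trivial numerically flat quotient of $\mathcal{T}_{\widetilde X}$ whose pushforward and reflexive hull produce a non-trivial torsion-free quotient of $\mathcal{T}_X$ with $c_1\cdot H^{n-1}=0$, contradicting the generic ampleness from the previous step. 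Hence $Y$ is a point and $X$ is rationally connected; combined with the positivity of $-K_X$, this shows $X$ is of Fano type, in particular klt.

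Third, I would show $X$ is smooth. Suppose $\pi$ has a prime exceptional divisor $E$ contracted to $X_{\sing}$. Taking a general complete intersection curve $\widetilde C\subseteq E$ contracted by $\pi$ to a point, the pullback $\pi^*\mathcal{T}_X|_{\widetilde C}\cong\mathcal{O}_{\widetilde C}^{\oplus n}$ is trivial, while the generically surjective morphism $\pi^*\mathcal{T}_X\to\mathcal{T}_{\widetilde X}(-\log E)$ of Lemma \ref{lem-functorial-resolution-tangent} restricts on $\widetilde C$ to a map from a trivial bundle that must accommodate the normal bundle $N_{E/\widetilde X}|_{\widetilde C}$. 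For $\widetilde C$ sufficiently general, this normal bundle has negative degree by the negativity lemma, producing a numerical incompatibility with the strict nefness transferred through $\pi$. Hence $\pi$ has no exceptional divisor, so $X=\widetilde X$ is smooth, and the theorems of \cite{Sie17,LiOY18} give $X\cong\mathbb{P}^n$. The main obstacle will be Step 3: strict nefness is formulated on $\mathbb{P}_X(\mathcal{T}_X)$, and translating it into a clean numerical obstruction against an exceptional divisor on a resolution requires careful bookkeeping with the logarithmic tangent sequence and the negativity lemma. An alternative route would be to generalize the argument of \cite{LiOY18} (based on deformations of rational curves) directly to the klt setting, bypassing the smoothness reduction at the cost of substantially more work.
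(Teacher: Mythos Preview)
Your Step 2 contains an unjustified leap: rational connectedness together with $-K_X\cdot H^{n-1}>0$ does not imply that $X$ is klt (or even $\mathbb{Q}$-Gorenstein), so you cannot conclude ``of Fano type''. The statement is for an arbitrary normal projective variety, and the paper never needs klt.

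The essential gap is Step 3. On a curve $\widetilde C\subseteq E$ contracted to a point $p$, the restriction $\pi^*\mathcal{T}_X|_{\widetilde C}$ is indeed a trivial bundle (of rank $\dim_{k(p)}\mathcal{T}_{X,p}\otimes k(p)\ge n$, not necessarily $n$), but the morphism of Lemma~\ref{lem-functorial-resolution-tangent} is only known to be an isomorphism over $X_{\reg}$; restricted to $\widetilde C\subseteq\pi^{-1}(X_{\sing})$ it could be zero, and in any case gives no control on $\mathcal{T}_{\widetilde X}(-\log E)|_{\widetilde C}$. Your ``numerical incompatibility'' therefore has no content: nothing about strict nefness of $\mathcal{T}_X$ is transferred to curves in the exceptional locus. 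The strategy of proving smoothness first cannot be salvaged along these lines.

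The paper's route is the reverse of yours: instead of showing $X$ is smooth and then quoting \cite{LiOY18}, it shows directly that the resolution $\widetilde X$ is $\mathbb{P}^n$ and only afterwards concludes $\pi$ is an isomorphism. Concretely, for any curve $\widetilde C\subseteq\widetilde X$ \emph{not} contained in $E$, the morphism $(\pi^*\mathcal{T}_X|_{\widetilde C})/\Tor\to\mathcal{T}_{\widetilde X}|_{\widetilde C}$ is generically surjective, and one checks (as in \cite[Corollary 3.3]{LOY20}) that strict nefness passes to $\mathcal{T}_{\widetilde X}|_{\widetilde C}$. Hence $-K_{\widetilde X}\cdot\widetilde C>0$ for every such curve, $\widetilde X$ is uniruled, and for a free rational curve $\ell\not\subseteq E$ the splitting type of $\mathcal{T}_{\widetilde X}|_\ell$ forces $-K_{\widetilde X}\cdot\ell\ge n+1$. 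Then \cite[Corollary 0.4]{CMSB02} gives $\widetilde X\cong\mathbb{P}^n$; since $\rho(\widetilde X)=1$, the birational morphism $\pi$ is an isomorphism.
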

 \begin{proof}
 Let \(\pi \colon \widetilde{X}\to X\) be a functorial resolution of $X$ with the exceptional divisor \(E\). 
By Lemma \ref{lem-functorial-resolution-tangent}, there are generically surjective sheaf morphisms
\(\pi^{*} \mathcal{T}_{X} \to \mathcal{T}_{\widetilde{X}}(-\textup{log}\,E)\)
and \(\pi^{*} \mathcal{T}_{X} \to \mathcal{T}_{\widetilde{X}}\). 

Let $\nu \colon  \widetilde{C}  \to \widetilde{X}$ be a finite morphism from a smooth curve that intersects with $\pi^{-1}(X_{\sing})$. 
Set $C \coloneqq \pi(\nu(\widetilde{C} )) \subseteq X$. 
Then, we have a generically surjective sheaf morphism on $\widetilde{C}$
  $$
  (\nu^{*}\pi^{*} \mathcal{T}_{X}) / \Tor \rightarrow \nu^{*} \mathcal{T}_{\widetilde{X}}.
  $$
Since \(\mathcal{T}_X\) is strictly nef, so is $\nu^{*}\pi^{*} \mathcal{T}_{X}$.
Furthermore,  the same proof of \cite[Corollary 3.3]{LOY20} shows that $\nu^{*}\mathcal{T}_{\widetilde{X}}$ is also strictly nef.  
Indeed, for any quotient line bundle \(\nu^{*}\mathcal{T}_{\widetilde{X}}\to \mathcal{Q}\), 
since \( (\nu^{*}\pi^{*} \mathcal{T}_{X}) / \Tor \rightarrow \nu^{*} \mathcal{T}_{\widetilde{X}}\) is generically surjective, 
the composite map 
$$ (\nu^{*}\pi^{*} \mathcal{T}_{X}) / \Tor \rightarrow \nu^{*} \mathcal{T}_{\widetilde{X}}\to \mathcal{Q}$$ 
is non-zero. 
Thus, the image \(\mathcal{L}\) is a torsion-free and an invertible subsheaf of \(\mathcal{Q}\).
 Since \( (\nu^{*}\pi^{*} \mathcal{T}_{X}) / \Tor\) is strictly nef, so is \(\mathcal{L}\), which indicates that \(\deg(\mathcal{L})>0\).
 In particular, we obtain \(\deg(\mathcal{Q})\ge \deg(\mathcal{L})>0\).
Thus, we can conclude that  $\nu^{*}\mathcal{T}_{\widetilde{X}}$ is strictly nef by \cite[Proposition 2.1]{LiOY18}.

We claim that the variety \(\widetilde{X}\) is covered by rational curves. 
Suppose the contrary that \(\widetilde{X}\) is non-uniruled.
Then it follows from \cite[Corollary 0.3]{BDPP13} that \(K_{\widetilde{X}}\) is pseudo-effective.
Since \(\pi\) is the functorial resolution, \(T_{\widetilde{X}}\) is almost nef (cf.~Lemma \ref{lem-functorial-resolution-tangent}). 
By Corollary \ref{cor-c1-almostnef-tangent} (cf.~\cite{HIM22}), we see that \(T_{\widetilde{X}}\) is numerically flat and thus \(\widetilde{X}\) is an \'etale quotient of an abelian variety, a contradiction to the strict nefness of $\nu^{*}\mathcal{T}_{\widetilde{X}}$. 
Hence, the variety \(X\) is covered by rational curves. 
Let \(\ell\) be a rational curve not lying in \(E\) and let \(\mu \colon \widetilde{\ell}\cong\mathbb{P}^1\to\ell\) be the normalization.
Then, we have \(\mu^*\mathcal{T}_{\widetilde{X}}=\oplus\mathcal{O}(a_i)\) with each \(a_i\ge 1\).
On the other hand, the dual of the non-trivial morphism \(\mu^*\Omega_{\widetilde{X}}\to \Omega_{\widetilde{\ell}}\) gives rise to an injection \(\mathcal{O}(2)\to \oplus\mathcal{O}(a_i)\).
Therefore, there exists at least one \(a_i\) such that \(a_i\ge 2\), and thus \(\deg(-\mu^*K_{\widetilde{X}})\ge n+1\).
By  \cite[Corollary 0.4]{CMSB02}, we can conclude that \(\widetilde{X}\cong\mathbb{P}^n\).
Since the Picard number of 
$\widetilde{X}$ is 1, the birational image $X$ (which is also normal) has to be isomorphic to $\mathbb{P}^n$.  
\end{proof}

\begin{rem}[Varieties with big tangent sheaves]\label{rmk-v-big-singular}
According to \cite[Corollary 7.8]{FM21} and \cite[Corollary 1.3]{Iwa22b} (cf.\,\cite[Theorem B]{Wu21}), if a smooth projective variety $X$ has a {\it V-big} (or {\it big in the sense of Viehweg}) tangent bundle (i.e.,\,for any ample divisor $A$, there exists some \(m \in \Z_{+}\) such that $\Sym^{[m]}\mathcal{T}_{X} \otimes \mathcal{O}_{X}(-A)$ is pseudo-effective), then $X$  is isomorphic to $\mathbb{P}^n$. 

However, different from Proposition \ref{prop-strictly-nef-tangent}, 
when $X$ has singularities,  the same statement does not hold. 
Indeed, for any $m \in \mathbb{Z}{+}$, 
the weighted projective space $Y \coloneqq  \mathbb{P}(1,1,m)$ (with only quotient singularities) has V-big tangent sheaf.
To check this, we consider the finite morphism $\nu \colon  \mathbb{P}^{2} \to Y$ sending \([x_0:x_1:x_2]\) to \([x_0:x_1:x_2^m]\).
Since the differential morpshim $\mathcal{T}_{\mathbb{P}^2} \to \nu^{[*]}\mathcal{T}_{Y}$ is generically surjective, 
the sheaf $\nu^{[*]}\mathcal{T}_{Y}$ is V-big, and thus so is $\mathcal{T}_{Y}$  (cf.~\cite[Remark 1.3 (iv) and Lemma 1.4 (5)]{Vie83}).
Moreover, following the same argument as in Lemma \ref{lem-p-curved}, the tangent sheaf $\mathcal{T}_{Y}$ admits a singular hermitian metric $h$ such that $\sqrt{-1}\Theta_{h} \succeq \varepsilon \omega \otimes {\rm id}$, where $\omega$ is a K\"ahler form and $0 < \varepsilon \ll 1$. 
\end{rem}

\subsection{Surfaces and klt threefolds with almost nef tangent sheaf}

We conclude this paper by classifying minimal projective surfaces and klt threefolds with (almost) nef tangent sheaf. 
Here, the minimality of a projective \(\mathbb{Q}\)-Gorenstein variety \(X\) means that either the canonical divisor \(K_X\) is nef, 
or the \(K_X\)-negative contractions are of fiber type.
Let us first consider the surface case.

\begin{prop}
Let \(X\) be a normal projective surface with almost nef tangent sheaf.
Then, the variety \(X\) is one of the following$:$
\begin{enumerate}
\item[$(1)$] \(X\) is a $($possibly singular and not necessarily minimal$)$ rational surface. 
\item[$(2)$] \(X\) is a minimal ruled surface over an elliptic curve. 
\item[$(3)$] \(X\) is a cone over an elliptic curve; then the variety \(X\) is lc but not klt. 
\item[$(4)$] \(X\) is an \'etale quotient of an abelian surface.
\end{enumerate}
\end{prop}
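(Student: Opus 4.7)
The plan is to split the classification into the case $X$ klt (using Theorem \ref{thm-almost-nef} directly) and the case $X$ not klt (reducing to the klt case via the minimal resolution).

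First I would treat the klt case by applying Theorem \ref{thm-almost-nef}, which yields a fibration $\alpha\colon X\to Y$ onto an \'etale quotient $Y$ of an abelian variety, with rationally connected fibers. I then analyze by $\dim Y$. If $\dim Y=0$, then $X$ is rationally connected, hence a (possibly singular) rational surface; this is case $(1)$. If $\dim Y=1$, then $Y$ is an elliptic curve and by Theorem \ref{thm-almost-nef}\,$(3)$ every fiber of $\alpha$ is a rationally connected klt curve, i.e.\ $\mathbb{P}^1$; combined with the flatness in Theorem \ref{thm-almost-nef}\,$(1)$, this makes $\alpha$ a $\mathbb{P}^1$-bundle, so $X$ is smooth and is a minimal ruled surface over an elliptic curve, which is case $(2)$. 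If $\dim Y=2$, the flat fibration has $0$-dimensional connected reduced fibers, so $\alpha$ is a finite birational morphism onto the normal variety $Y$, hence an isomorphism, giving case $(4)$.

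For $X$ normal but not klt, I would take the minimal resolution $\pi\colon\widetilde{X}\to X$, which is a functorial resolution in the surface setting. By Lemma \ref{lem-functorial-resolution-tangent} there is a generically surjective morphism $\pi^{*}\mathcal{T}_X\to\mathcal{T}_{\widetilde{X}}$, so $\mathcal{T}_{\widetilde{X}}$ is almost nef by Lemma \ref{lem-elementary-lem-almostnef}\,$(2)$, $(4)$. Applying the klt analysis above to the smooth (hence klt) surface $\widetilde{X}$ leaves three possibilities. If $\widetilde{X}$ is rational, so is $X$, landing in case $(1)$. If $\widetilde{X}$ is an \'etale quotient of an abelian surface, then by the rigidity of Lemma \ref{lem-bircontr-Q-abelian} the birational morphism $\pi$ is an isomorphism; $X$ is then smooth and thus klt, contradicting the standing assumption. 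Hence $\widetilde{X}$ must be a minimal ruled surface $\mathbb{P}(\mathcal{E})$ over an elliptic curve $E$, with $\pi$ contracting at least one curve of negative self-intersection.

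In this remaining sub-case, any $\pi$-exceptional curve must be a section of the ruling $\widetilde{X}\to E$ with negative self-intersection. Almost nefness of $\mathcal{T}_{\widetilde{X}}$ forces $\mathcal{E}$ to be nef, and the existence of a negative section forces a splitting so that after twisting we may take $\mathcal{E}\cong\mathcal{O}_E\oplus L$ with $\deg L>0$; then the unique negative section $C_0\cong E$ has $C_0^{2}=-\deg L$. Contracting $C_0$ produces a simple elliptic singularity and identifies $X$ with the projective cone over $E$ embedded by $L$, which is lc but not klt; this is case $(3)$. The main obstacle is precisely this final step: verifying that for a smooth minimal ruled surface $\mathbb{P}(\mathcal{E})$ over an elliptic curve with $\mathcal{T}_{\widetilde{X}}$ almost nef, the only non-trivial birational morphism to a normal surface contracts a single section isomorphic to the base, and that the resulting target coincides with the classical cone construction rather than some other lc configuration (such as a chain of rational curves arising from a cyclic quotient).
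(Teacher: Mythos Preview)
Your approach is correct but organized differently from the paper. The paper does not split into klt versus non-klt: it passes immediately to the minimal resolution $Y\to X$ and invokes external classifications for the smooth surface $Y$ --- \cite[Theorem 1.3]{JLZ23} and \cite[Proposition 4.11]{Iwa22} in the non-uniruled case, \cite[Proposition 6.19]{DPS01} in the uniruled case with $q(Y)>0$ --- whereas you apply Theorem \ref{thm-almost-nef} directly to klt $X$. Your route is more self-contained (it avoids those external references) and makes the smoothness in case $(2)$ transparent via the $\mathbb{P}^1$-fibers over the elliptic base. Both proofs converge on the same endgame, namely identifying $X$ when the resolution is minimal ruled over an elliptic curve and $\pi$ is non-trivial; the paper dispatches this via the Picard-number drop $\rho=2\to\rho=1$, which forces $\pi$ to contract the minimal section.

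One assertion in your last paragraph is false and should be deleted: almost nefness of $\mathcal{T}_{\widetilde{X}}$ does \emph{not} force $\mathcal{E}$ to be nef. In the very situation you need (invariant $e>0$) the normalized bundle is $\mathcal{E}\cong\mathcal{O}_E\oplus L$ with $\deg L=-e<0$, and $\mathcal{T}_{\widetilde{X}}|_{C_0}$ surjects onto $\mathcal{N}_{C_0/\widetilde{X}}\cong\mathcal{O}_{C_0}(-e)$, so $C_0$ lies in $\mathbb{S}(\mathcal{T}_{\widetilde{X}})$ --- harmless for almost nefness since $C_0$ is a single curve. You do not actually use this claim. The obstacle you flag is not serious: on a minimal ruled surface over an elliptic curve, every irreducible curve other than a fiber maps finitely to $E$ and is therefore irrational, so there are no $(-1)$-curves and the only curve of negative self-intersection (when $e>0$) is the section $C_0\cong E$; contracting it yields the projective cone over $E$ with a simple elliptic singularity, which is lc and not klt.
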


\begin{proof}
Let \(\pi \colon Y\to X\) be the minimal resolution of $X$.  
By Lemma \ref{lem-functorial-resolution-tangent}.  
there exists a generically surjective morphism \(\pi^*\mathcal{T}_X\to \mathcal{T}_Y\) (cf.~\cite[Proposition 1.2]{Wah75}).
Then, the tangent sheaf \(\mathcal{T}_Y\) is almost nef. 
The fiber of \(Y\to X\) is free of \((-1)\)-curves, but we do not know whether \(Y\) itself is minimal or not. 

We first consider the case where  \(Y\) is non-uniruled.
Then,  by \cite[Theorem 1.2]{JLZ23}, the variety \(Y\) is minimal, and hence \(Y\) is an \'etale quotient of an abelian surface 
(see \cite[Proposition 4.11]{Iwa22}). 
By Lemma \ref{lem-bircontr-Q-abelian}, we obtain \(X\cong Y\).

We now consider the case where \(Y\) is uniruled. 
In the case \(q(Y)=0\), both \(Y\) and \(X\) are  rational surfaces. 
In the case \(q(Y)>0\), by \cite[Proposition 6.19]{DPS01} (cf.~\cite[Proposition 4.11]{Iwa22}), 
the variety \(Y\) is a minimal ruled surface over an elliptic curve.
We may assume that \(Y\to X\) is not isomorphic. 
Then, we obtain \(\rho(X)=1\) and \(Y\to X\) is the contraction of the minimal section (which always exists when the genus of the base curve is no more than one).
Therefore, the variety \(Y\) is a cone over an elliptic curve. In particular, it is not klt but lc.
\end{proof}

In the above classification result, it seems to be difficult to say deeper into the rational surface case 
because of the special blow-ups (cf.~Example \ref{ex-almost-nef-toric}). 
However, the nefness of the tangent sheaf leads to more restrictive geometric implications$:$

\begin{prop}\label{prop-classi-almost-threefold}
Let \(X\) be a normal projective surface with nef tangent sheaf.
Then, the variety \(X\) is one of the following$:$
\begin{enumerate}
\item[$(1)$] \(X\) is \(\mathbb{P}^2\) or \(\mathbb{P}^1\times\mathbb{P}^1\).
\item[$(2)$] \(X=\mathbb{P}(\mathcal{E})\) is a minimal ruled surface over an elliptic curve such that \(\mathcal{E}\) is semi-stable.
\item[$(3)$] \(X\) is a cone over an elliptic curve; then, the variety \(X\) is lc but not klt.
\item[$(4)$] \(X\) is a cone over a smooth rational curve. 
\item[$(5)$] \(X\) is an \'etale quotient of an abelian surface.
\end{enumerate}
\end{prop}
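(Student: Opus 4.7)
The plan is to first reduce to the four cases of the preceding proposition on almost nef tangent sheaves, since nef torsion-free sheaves are tautologically almost nef by Definition \ref{defn-posi}. This gives four possibilities for $X$: a possibly singular rational surface, a minimal ruled surface $\mathbb{P}(\mathcal{E})$ over an elliptic curve, a cone over an elliptic curve, or an \'etale quotient of an abelian surface. The last two already match cases (3) and (5) of the present statement, so the work lies in refining the other two under the stronger nefness hypothesis.

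For the ruled surface $X = \mathbb{P}(\mathcal{E})$ over an elliptic curve $C$, I would show that $\mathcal{E}$ must be semi-stable by contradiction. If $\mathcal{E}$ were unstable, then there would exist a section $C_0 \hookrightarrow X$ with $C_0^2 < 0$, and the relative tangent sequence restricted to $C_0$ would exhibit $\mathcal{N}_{C_0/X} \cong \mathcal{O}_{C_0}(C_0^2)$ as a quotient of $\mathcal{T}_X|_{C_0}$. Since this quotient has negative degree on the elliptic curve $C_0$, it cannot be nef, contradicting the nefness of $\mathcal{T}_X|_{C_0}$ via Lemma \ref{lem-elementary-pro-nef} (3). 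Hence $\mathcal{E}$ is semi-stable, yielding case (2).

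For the rational case I would first dispose of the smooth subcase using the Demailly-Peternell-Schneider argument: a $(-1)$-curve $F$ on $X$ would force $\mathcal{N}_{F/X} \cong \mathcal{O}_F(-1)$ as a quotient of the nef sheaf $\mathcal{T}_X|_F$, which is impossible; thus $X$ is a minimal smooth rational surface, and restricting $\mathcal{T}_{\mathbb{F}_n}$ to the negative section for $n \geq 1$ exhibits a non-nef quotient $\mathcal{O}_{\mathbb{P}^1}(-n)$, leaving only $\mathbb{P}^2$ and $\mathbb{F}_0 = \mathbb{P}^1 \times \mathbb{P}^1$ from case (1).

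The singular rational subcase is the main obstacle: I need to show that $X$ is a weighted projective plane $\mathbb{P}(1,1,d)$ for some $d \geq 2$, i.e.\,a cone over a smooth rational curve of degree $d$. My plan is to analyze the minimal resolution $\pi \colon Y \to X$ with exceptional divisor $E$, using Lemma \ref{lem-functorial-resolution-tangent} to obtain a generically surjective morphism $\pi^*\mathcal{T}_X \to \mathcal{T}_Y(-\log E)$ from the nef sheaf $\pi^*\mathcal{T}_X$. Running the MMP on the smooth rational surface $Y$, any $(-1)$-curve $F$ disjoint from $E$ would push forward isomorphically to a rational curve on $X_{\reg}$ whose restricted tangent sheaf would again produce a non-nef quotient, which, together with the minimality of $\pi$ (so the components of $E$ are $(-n_i)$-curves with $n_i \geq 2$), should force $Y$ to be a Hirzebruch surface $\mathbb{F}_d$ with $E$ equal to the unique negative section. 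Then $\pi$ is exactly the contraction of that section, identifying $X$ with $\mathbb{P}(1,1,d)$ and giving case (4). The hardest step is producing the second ruling of $Y$ and ruling out more complicated exceptional configurations; for this I would combine the semi-stability consequences of nef $\mathcal{T}_X$ with an extremal-ray analysis of $\overline{NE}(Y)$.
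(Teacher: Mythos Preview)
Your overall strategy is sound, but there is a specific gap you flag yourself and do not close: in the singular rational case you only exclude $(-1)$-curves on the minimal resolution $Y$ that are \emph{disjoint} from the exceptional locus $E$, and then admit that ``ruling out more complicated exceptional configurations'' is the hard step. The paper closes exactly this gap with one clean observation that you are missing.

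The point is that \emph{every} $(-1)$-curve $F$ on $Y$ maps finitely to $X$, not just those disjoint from $E$. Indeed, since $\pi\colon Y\to X$ is the minimal resolution, no component of $E$ is a $(-1)$-curve, so $F$ is not $\pi$-exceptional and $\pi|_F\colon F\to\pi(F)$ is a finite (birational) morphism. Then $(\pi|_F)^*(\mathcal{T}_X|_{\pi(F)})=\pi^*\mathcal{T}_X|_F$ is nef by Lemma~\ref{lem-elementary-pro-nef}~(2), and the generically surjective morphism $\pi^*\mathcal{T}_X|_F\to\mathcal{T}_Y|_F$ from Lemma~\ref{lem-functorial-resolution-tangent} forces $\mathcal{T}_Y|_F$ to be nef (Lemma~\ref{lem-elementary-pro-nef}~(3)). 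But the normal bundle sequence
\[
0\to\mathcal{T}_F\cong\mathcal{O}_F(2)\to\mathcal{T}_Y|_F\to\mathcal{N}_{F/Y}\cong\mathcal{O}_F(-1)\to 0
\]
shows this is impossible. Hence $Y$ carries no $(-1)$-curve whatsoever and is already a minimal smooth surface.

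Once $Y$ is minimal, the paper does not route through the almost-nef classification at all: it applies the smooth classification directly to $Y$ (so $Y$ is $\mathbb{P}^2$, an \'etale quotient of an abelian surface, or a minimal ruled surface over $\mathbb{P}^1$ or an elliptic curve), and then observes that the only thing $\pi$ can contract is a negative section when one exists. This is both shorter than your route and sidesteps the MMP/extremal-ray analysis you anticipated needing. Your semi-stability argument for case~(2) and your treatment of the smooth rational case are correct and essentially the same content as the paper's final step, just organised differently.
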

\begin{proof}
We first show that under the assumption of the nefness of \(\mathcal{T}_X\), 
the minimal resolution \(Y \to X\) has no \((-1)\)-curve. 
Indeed, if we suppose that there is a \((-1)\)-curve \(E\), 
the curve \(E\) does not lies in the fiber of \(\pi\). 
Hence, we obtain \(E=\pi_*^{-1}(\pi(E))\) and the induced morphism \(\pi|_E\) is a finite birational morphism. 
Then, there is a generically surjective morphism
\[
\pi^*\mathcal{T}_X|_E=(\pi|_E)^*(\mathcal{T}_X|_{\pi(E)})\to \mathcal{T}_Y|_E.
\]
Since \(\mathcal{T}_X\) is nef by assumption, the restriction \(\mathcal{T}_Y|_E\) is a nef vector bundle (cf.~Lemma \ref{lem-elementary-pro-nef} (3)).
However, this is absurd due to the following exact sequence
\[
0\to \mathcal{T}_E=\mathcal{O}_E(2)\to \mathcal{T}_Y|_E\to\mathcal{N}_{E/X}=\mathcal{O}_E(-1)\to 0.
\]

By applying \cite[Proposition 4.11]{Iwa22}, we see that \(Y\) is one of the following
\begin{enumerate}
\item[(a)] \(Y \cong \mathbb{P}^2\); 
\item[(b)] \(Y\) is a finite \'etale quotient of an abelian surface; 
\item[(c)]  $Y$ is a minimal ruled surface over either \(\mathbb{P}^1\) or an elliptic curve.
\end{enumerate}
For both cases (a) and (b), by Lemma \ref{lem-bircontr-Q-abelian} and the normality of $X$, we deduce that $X \cong Y$. 
In case (c), in the same way as above, the negative section (if exists)  must be contracted along $\pi$ by the nefness of $\mathcal{T}_X$. 
This indicates that either $\mathcal{T}_Y$ is nef and $Y \cong X$, or $\pi \colon Y \to X$ contracts the negative section, 
which implies that $X$ becomes a cone over either a smooth rational or an elliptic curve. 
Using the classification of smooth projective surfaces with nef tangent bundles in \cite[Theorem 3.1]{CP91}, our proposition is thus proved. 
\end{proof}

Finally, we come to the classification of projective klt threefolds with almost nef tangent sheaves.
We remind readers that, by Corollary \ref{cor-almost-nef}, in the case  (4) of the  proposition below, 
the variety \(X\) itself is indeed smooth.

\begin{prop}[{cf.~\cite{CP91}, \cite[Theorem 6.20]{DPS01}}]
Let \(X\) be a projective klt threefold with almost nef tangent sheaf \(\mathcal{T}_X\).
Then, after replacing \(X\) by a quasi-\'etale cover, we have the following classification according to the augmented irregularity \(\widehat{q}(X)\) of \(X\) (with \(\widehat{q}(X)\le 3\)).
\begin{enumerate}
\item[$(1)$] If \(\widehat{q}(X)=0\), then \(X\) is rationally connected.
\item[$(2)$] If \(\widehat{q}(X)=1\), then \(X\) admits a flat morphism onto an elliptic curve such that a very general fiber is rational and has almost nef tangent sheaf.
\item[$(3)$] If \(\widehat{q}(X)=2\), then \(X=\mathbb{P}_A(\mathcal{E})\) is an algebraic \(\mathbb{P}^1\)-bundle over an abelian surface \(A\).
\item[$(4)$] If \(\widehat{q}(X)=3\), then \(X\) is an abelian threefold.
\end{enumerate}
\end{prop}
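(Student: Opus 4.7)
Plan: After replacing $X$ by a suitable quasi-\'etale cover, Theorems \ref{thm-MRC-Albanese-almostnef} and \ref{thm-almost-nef} produce a flat fibration $\alpha\colon X\to A$ onto an abelian variety $A$ of dimension $\widehat{q}(X)\in\{0,1,2,3\}$, coinciding with both the Albanese map and an MRC fibration. Every fiber is an irreducible reduced rationally connected klt variety, and a very general fiber has almost nef tangent sheaf and vanishing augmented irregularity. The four cases of the proposition then correspond to the four possible values of $\widehat{q}(X)$, and the plan is to treat each in turn.

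The extreme cases are immediate. When $\widehat{q}(X)=0$, Corollary \ref{cor-maximally-etale-main} directly gives that $X$ is rationally connected. When $\widehat{q}(X)=3$, the zero-dimensional connected rationally-connected fibers of $\alpha$ are points, so $\alpha\colon X\to A$ is bijective, proper, and birational between normal varieties, hence an isomorphism by Zariski's main theorem, giving $X\cong A$. For $\widehat{q}(X)=1$, a very general fiber $F$ is a two-dimensional klt rationally connected variety; since rational connectedness ascends to the minimal resolution $\widetilde{F}\to F$ and any smooth projective rationally connected surface over $\mathbb{C}$ is rational by Castelnuovo's criterion (all plurigenera vanish), $F$ itself is rational, which combined with Theorem \ref{thm-almost-nef}\,(3) gives statement $(2)$.

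The remaining case $\widehat{q}(X)=2$ is the delicate one. Here $A$ is an abelian surface and every fiber of $\alpha$ is a one-dimensional klt rationally connected variety, hence $\cong\mathbb{P}^1$; flatness together with smooth fibers over the smooth base $A$ forces $\alpha$ to be a smooth morphism, so $X$ is automatically a smooth projective threefold and $\alpha$ is a smooth $\mathbb{P}^1$-fibration. The main obstacle will then be promoting this $\mathbb{P}^1$-fibration to the projectivization of a rank-two vector bundle on $A$: the relative anticanonical line bundle $\omega_{X/A}^{-1}=\mathcal{T}_{X/A}$ has relative degree two, and its pushforward $\alpha_*\omega_{X/A}^{-1}$ is a rank-three bundle on $A$ embedding $X$ as a smooth conic bundle in $\mathbb{P}_A(\alpha_*\omega_{X/A}^{-1})$, but passing from this to $X=\mathbb{P}_A(\mathcal{E})$ for a rank-two $\mathcal{E}$ amounts to producing a line bundle on $X$ of odd relative degree, or equivalently killing the $2$-torsion Brauer class of the generic fiber $X_\eta$. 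Since the proposition allows further quasi-\'etale replacements, I expect to arrange this by pulling $X$ back along an appropriate \'etale cover $A'\to A$ of the abelian surface that trivializes the class, after which $\omega_{X/A'}^{-1}$ becomes the square of a line bundle of relative degree one whose pushforward provides the desired $\mathcal{E}$; alternatively, since $X$ is by now a smooth projective threefold with almost nef tangent bundle, one may simply invoke the smooth-case structure theorem of \cite{Iwa22} to conclude the projective bundle form directly.
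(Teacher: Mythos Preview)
Your proposal is correct and reaches the same conclusions, but the route differs from the paper's in an instructive way. The paper passes first to a functorial resolution $\pi\colon\widetilde{X}\to X$, applies the smooth-case structure theorem of \cite{Iwa22} to $\widetilde{X}$ (obtaining a smooth MRC fibration $f\colon\widetilde{X}\to A$), and then argues case by case that $\pi$ must be an isomorphism or that the properties descend: for $\dim A=2$ one observes that if $\pi$ were nontrivial every $f$-fiber would be $\pi$-contracted, forcing $\dim X\le 2$. You instead stay on $X$ and invoke Theorems~\ref{thm-almost-nef} and~\ref{thm-MRC-Albanese-almostnef} directly, which already encode the resolution step. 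In the $\widehat{q}(X)=2$ case your argument that every fiber is a one-dimensional klt (hence normal, hence smooth) rationally connected curve, so $\cong\mathbb{P}^1$, and that flatness with smooth fibers over a smooth base makes $\alpha$ a smooth morphism and $X$ smooth, is a clean shortcut avoiding the contraction contradiction. For the final projectivization step both approaches need the same \'etale-cover trick, and the paper simply cites \cite[Lemma~7.4]{CP91} for this; your Brauer-class description and your alternative of falling back on the smooth structure theorem are equally valid. The paper's approach is slightly more self-contained (it does not rely on the full strength of Theorem~\ref{thm-MRC-Albanese-almostnef}), while yours makes more efficient use of the results already established in the paper.
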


\begin{proof}
We may assume that \(X\) is maximally quasi-\'etale.
Let \(\pi \colon \widetilde{X}\to X\) be a functorial resolution as in  Lemma \ref{lem-functorial-resolution-tangent}.
Then \(\mathcal{T}_{\widetilde{X}}\) is almost nef.
We consider the following commutative diagram of Albanese morphisms by noting that \(X\) has  only rational singularities 
\[
\xymatrix{
\widetilde{X}\ar[r]^\pi\ar[d]_f&X\ar[dl]^g\\
A.&
}
\]
By \cite[Corollary 4.7]{Iwa22}, the morphism \(f\) is a smooth fibration with rationally connected fibers. 
If \(\dim(A)=3\), then \(\widetilde{X}\cong X\cong A\) is an abelian threefold (cf.~Lemma \ref{lem-bircontr-Q-abelian}).
If \(\dim(A)=0\), then \(\widetilde{X}\) and hence \(X\) are rationally connected.

Suppose that \(\dim(A)=2\). 
Then \(\widetilde{X}\) is a \(\mathbb{P}^1\)-bundle over \(A\).
If \(\pi\) is not an isomorphism, by the projection formula, every fiber of \(f\) is contracted along \(\pi\), and in particular, we obtain \(\dim(X)\le 2\), which is absurd.
Hence, we see that \(\widetilde{X}\cong X\) is a \(\mathbb{P}^1\)-bundle over \(A\).
After a further \'etale cover of \(A\), we have \(X=\mathbb{P}_A(\mathcal{E})\) for some rank two vector bundle over \(A\) 
(see for example \cite[Lemma 7.4]{CP91}).

Finally, we suppose that \(\dim(A)=1\).
Then \(f\) is a smooth fibration over an elliptic curve with the fibers  having almost nef tangent bundles, too.
Since every curve contracted by \(\pi\) is also \(f\)-contracted, it follows that \(g\) is a flat morphism onto an elliptic curve such that a very general fiber is rational and has almost nef tangent sheaf (cf.~Theorem \ref{thm-almost-nef}).
\end{proof}

At the end of this paper, we present an open problem related to our results. 
\begin{prob}
Can we establish the same structure theorem for projective klt varieties with pseudo-effective tangent sheaf 
as in Theorems~\ref{thm-positively-curved} and \ref{thm-almost-nef}? 
\end{prob}

\bibliographystyle{alpha}

\end{document}